\definecolor{labelkey}{gray}{.8}
\definecolor{refkey}{gray}{.8}
\definecolor{darkblue}{rgb}{0,0,0.7} 
\definecolor{darkred}{rgb}{0.9,0.1,0.1}
\definecolor{darkgreen}{rgb}{0,0.5,0}
\newcommand{\rs}[1]{#1}
\newcommand{\rh}[1]{#1}
\newcommand{\ad}[1]{#1}
\renewenvironment{proof}[1][\smallskip\noindent\proofname]{{\smallskip\noindent\bfseries #1. }}{\qed \medskip}
\newtheorem{thm}{Theorem}[section]
\newtheorem{theorem}[thm]{Theorem}
\newtheorem{prop}[thm]{Proposition}
\newtheorem{lem}[thm]{Lemma}
\theoremstyle{definition}
\newtheorem{rem}[thm]{Remark}
\newcommand\norm[1]{\left\lVert#1\right\rVert}
\newcommand\bra[1]{\left({#1}\right)}
\newcommand\abs[1]{\left\lvert#1\right\rvert}
\renewcommand{\le}{\leqslant}
\renewcommand{\ge}{\geqslant}
\renewcommand{\leq}{\leqslant}
\renewcommand{\geq}{\geqslant}
\newcommand{\ls}{\lesssim}
\newcommand{\gs}{\gtrsim}
\renewcommand{\subset}{\subseteq}
\newcommand{\cH}{\mathcal{H}}
\newcommand{\mH}{\mathcal{H}}
\newcommand{\J}{\mathcal{J}}
\newcommand{\mP}{\mathcal{P}}
\newcommand{\N}{\mathbb{N}}
\newcommand{\1}{\mathbf{1}}
\newcommand{\R}{\mathbb{R}}
\newcommand{\mfk}{\mathfrak}
\newcommand{\dd}{\, \mathrm{d}}
\newcommand{\wto}{\rightharpoonup}
\newcommand{\loc}{\mathrm{loc}}
\DeclareMathOperator{\St}{\mathrm{St}}
\newcommand{\W}{\mathcal{W}}
\newcommand{\dmin}{d_{\mathrm{min}}}
\newcommand{\mR}{\mathcal R}
\newcommand{\m}{\mathcal}
\renewcommand{\varrho}{\rho}
\DeclareMathOperator{\curl}{curl}
\DeclareMathOperator{\Id}{Id}
\DeclareMathOperator{\dv}{div}
\DeclareMathOperator*{\supp}{supp}
\DeclareMathOperator{\dist}{dist}
\numberwithin{equation}{section}
\begin{document}


\title{\LARGE Sedimentation of Particles with Very Small Inertia I:  Convergence to the Transport-Stokes Equation}
\author[1]{Richard M. H\"ofer\thanks{richard.hoefer@ur.de}}
\author[2]{Richard Schubert\thanks{schubert@iam.uni-bonn.de}}
\affil[1]{Faculty of Mathematics, University of Regensburg, Germany}
\affil[2]{Institute for Applied Mathematics, University of Bonn, Germany}


\maketitle

\begin{abstract}
    We consider the sedimentation of $N$ spherical particles with identical radii $R$ in a  Stokes flow in $\R^3$. The particles satisfy a no-slip boundary condition and are subject to constant gravity. The dynamics of the particles is modeled by Newton's law but with very small particle inertia as $N$ tends to infinity and $R$ to $0$.
   In a mean-field scaling, we show that the particle evolution is well approximated by the transport-Stokes system which has been derived previously as the mean-field limit of inertialess particles. In particular this justifies to neglect the particle inertia in the microscopic system, which is  a typical modelling assumption in this and related contexts. 

   The proof is based on a relative energy argument that exploits the coercivity of the particle forces with respect to the particle velocities  in a Stokes flow. We combine this with an adaptation of Hauray's method for mean-field limits to $2$-Wasserstein distances.
   Moreover, in order to control the minimal distance between particles, we prove a representation of the particle forces. This representation makes the heuristic \enquote{Stokes law} rigorous that the force on each particle is proportional to the difference of the velocity of the individual particle and the mean-field fluid velocity generated by the other particles.
\end{abstract}

\tableofcontents

\section{Introduction}

Particulate flows are ubiquitous in nature and engineering. They appear for instance in the atmosphere and in biological fluids. These \emph{suspensions} can be described on different scales. The most accurate, but from a practical perspective also most challenging, perspective is the microscopic one, where each particle in the fluid is resolved in the mathematical model and their evolution is governed by Newton's laws. Looking for an \emph{effective} behaviour of the mixture, the density of the particles is described in a continuous manner. Using a phase space density for the mesoscopic (kinetic) perspective and using a spatial density on the macroscopic (hydrodynamic) scale. In the spirit of Hilbert's sixth problem, this and the accompanying article \cite{HoferSchubert23b} connect the microscopic model to both of the larger scales in the important case of the sedimentation of particles with inertia. This is particularly relevant for aerosols.

For a number $N$ of particles, the state of the particles at any time is determined by the particle positions $X_i\in \R^3$, velocities $V_i\in \R^3$ and angular velocities $\Omega_i\in \R^3$ for $1\le i\le N$. We model the particles by identical spheres with radius $R$ which implicitly depends on $N$. We denote the space occupied by particle $i$ as $B_i = \overline{B_R(X_i)}$. The sedimentation of the particles in a Stokes flow is governed by Newton's law which takes the dimensionless form
\begin{align}
    \dot X_i &= V_i,\label{eq:Xi}\\
    \dot{V}_i &= \lambda_N \left(g + \frac{\gamma_N}{R} \int_{\partial B_i} \sigma[u_N] n \dd  \mathcal{H}^2 \right), \label{eq:Vi}\\
    \dot \Omega_i &= \rs{\frac{5}{2}} \lambda_N \frac{\gamma_N}{R^3} \int_{\partial B_i} (x - X_i)\times (\sigma[u_N] n)\dd  \mathcal{H}^2,\label{eq:Oi}
\end{align}
where $g \in S^2$ is the (normalized) gravitational acceleration, $\sigma[u_N] = 2 e u_N - p_N \Id$ is the fluid stress, $e  u_N = \frac 12(\nabla u_N + (\nabla  u_N)^T) $ denotes the symmetric gradient of $u_N$, the outer normal of $\partial B_i$ is denoted by $n$, and $ \mathcal{H}^2$ is the two-dimensional Hausdorff measure. 
The fluid flow $u_N:\R^3\to \R^3$ itself with associated pressure $p_N:\R^3\to \R$ is herein given as the unique solution in $\dot H^1(\R^3)$ to the Stokes problem
\begin{equation}
	\label{eq:fluid.micro.rot}
\left\{\begin{array}{rl}
		- \Delta u_N + \nabla p_N = 0, ~ \dv u_N &=0 \quad \text{in} ~ \R^3 \setminus \bigcup_i B_i, \\
		  u_N(x) &= V_i + \Omega_i \times (x - X_i) \quad \text{in} ~ B_i.
\end{array}\right.
\end{equation}

The parameter $\gamma_N = NR$ accounts for the interaction strength and the Stokes number $\St_N = \frac 1 {\gamma_N \lambda_N}$ accounts for the strength of  inertial forces (if $\gamma_N$ is at least of order $1$). Note that in view of the Stokes drag formula for the drag force of a single particle, $\int_{\partial B} \sigma[u_N] n \dd  \mathcal{H}^2=-6\pi RV$ (and the corresponding formula for the torque), the surface integrals in \eqref{eq:Vi} and \eqref{eq:Oi} are of order $R$ and $R^3$, respectively. 
For details on the non-dimensionalization we refer to \cite{Hofer18MeanField}.

In the present paper, we consider the effective behavior of the evolution of inertial particles  \eqref{eq:Xi}--\eqref{eq:fluid.micro.rot} 
for very small particle inertia.
Loosely stated, we show that for $N \to \infty$ with  $\gamma_N \to \gamma_\ast \in (0,\infty)$ and $\lambda_N \to \infty$, the solution to  \eqref{eq:Xi}--\eqref{eq:fluid.micro.rot} is
close to the solution of the transport-Stokes equation
 \begin{align}  \label{eq:transport-Stokes}
\left\{\begin{array}{rl}
\partial_t \rho_\ast +(u_\ast+ \bra{6 \pi \gamma_\ast}^{-1} g) \cdot \nabla \rho_\ast&=0,\\
-\Delta u_\ast+\nabla p_\ast= \rho_\ast g, ~ \dv u_\ast&=0,\\
\rho_\ast(0)&=\rho^0.
\end{array}\right.
\end{align} 
The precise statement of the result is given in Theorem~\ref{th:diagonal}.

The transport-Stokes equation \eqref{eq:transport-Stokes} has 
been established before in \cite{Hofer18MeanField, Mecherbet19,Hofer&Schubert} as mean-field limit of the inertialess microscopic system 
\begin{equation}
\left\{\begin{array}{rl}
	\label{eq:inertialess.rot}
\dot {\mfk X}_i = \mfk V_i &=  \mfk u_N(\mfk X_i),\\
		- \Delta \mfk u_N + \nabla \mfk p_N = 0, ~ \dv  \mfk u_N &=0 \quad \text{in} ~ \R^3 \setminus \bigcup_i \mfk B_i, \\
		  \mfk u_N(x) &= \mfk V_i+\omega_i\times(x-\mfk X_i) \quad \text{in} ~ \mfk B_i, \\
		 -\int_{\partial \mfk B_i} \sigma[ \mfk u_N] n \dd  \mathcal{H}^2  &= \frac g N, \quad \text{for all} ~ 1 \leq i \leq N,\\ 
		 \int_{\partial \mfk B_i} (x - \mfk X_i)\times (\sigma[\mfk u_N] n) \dd  \mathcal{H}^2 &= 0, \quad \text{for all} ~ 1 \leq i \leq N,
\end{array}\right.
\end{equation}
where $\mfk B_i = \overline{B_R(\mfk X_i)}$. Here, in contrast to \eqref{eq:Xi}--\eqref{eq:fluid.micro.rot}, $\mfk V_i$ and $\omega_i$ are implicitly determined by the positions $(\mfk X_i)_i$ of the particles.
The inertialess system \eqref{eq:inertialess.rot} can formally be derived from \eqref{eq:Xi}--\eqref{eq:fluid.micro.rot} by setting $\lambda_N=\infty$. 

Modelling  particles as being inertialess is always an idealization. Therefore, it is an important problem to rigorously justify inertialess models starting from models with very small inertia. In particular, in combination with \cite{Hofer&Schubert}, the result in this paper shows that the microscopic system with inertia is close to the corresponding microscopic system \emph{without} inertia \eqref{eq:inertialess.rot}.

\subsection{Previous results}

Models of inertial and inertialess particles suspended in viscous incompressible fluids have been extensively studied in various contexts to investigate the effective behavior of suspensions. We focus our discussion on homogenization and mean-field results related to the microscopic systems \eqref{eq:Xi}--\eqref{eq:fluid.micro.rot} and \eqref{eq:inertialess.rot}, on related problems with vanishing inertia and on results regarding the transport-Stokes equation \eqref{eq:transport-Stokes}. For a discussion on results on the Vlasov-Stokes equation \eqref{eq:Vlasov.Stokes} and related systems, we refer to our accompanying paper \cite{HoferSchubert23b}.

\subsubsection*{Homogenization results}

Considering the limit $N \to \infty$ with $R \to 0$ for the static problem \eqref{eq:fluid.micro.rot}, when the particle velocities $V_i$ and $\Omega_i$ are given, is a classical homogenization problem in perforated domains. It is by now mathematically well-understood that the drag force at the particles gives rise to a collective Brinkman force, see e.g. \cite{Allaire90a, DesvillettesGolseRicci08, Feireisl2016, HillairetMoussaSueur17, GiuntiHofer18, CarrapatosoHillairet18, HoeferJansen20}. More precisely, if the empirical phase space density 
\begin{align}
    f_N=\frac 1N\sum_i \delta_{X_i}\otimes\delta_{V_i}
\end{align}
converges  to some $f = f(x,v)$ as $N \to \infty$, $R \to 0$ with $\gamma_N = NR \to \gamma_\ast \in [0,\infty)$, then (under some additional mild assumption on the particle configuration) $u_N \wto u_\ast$ in $\dot H^1(\R^3)$, where $u_\ast$ is the unique weak solution to the Brinkman equation
\begin{align}
    -\Delta u_\ast + \nabla p_\ast =  6 \pi\gamma_\ast \int (v- u_\ast) f \dd v, \quad \dv u_\ast &=0. 
\end{align}

On the other hand, if the analogous static problem derived from \eqref{eq:inertialess.rot} is studied for inertialess particles in the absence of gravity (respectively for neutrally buoyant particles)
but with some external force $h$, the presence of the particles leads to an increase of the effective viscosity of the fluid, proportional to the particle volume fraction $\phi = NR^3$. This means, the homogenized equation reads, to first order in $\phi$,
\begin{align}
    -\dv((2 + 5 \phi \rho) e \mfk u_\ast) + \nabla \mfk p_\ast =  h, \quad \dv \mfk u_\ast &=0,
\end{align}
where $\rho = \rho(x)$ is the limit of the empirical particle density
\begin{align}
    \rho_N=\frac 1N\sum_i \delta_{\mfk X_i}.
\end{align}This has been made rigorous in \cite{HainesMazzucato12, NiethammerSchubert19, HillairetWu19, 
Gerard-VaretHoefer21, DuerinckxGloria21, Duerinckx22}.

\subsubsection*{Mean-field and related results}

As mentioned before, considering the full dynamic problem \eqref{eq:inertialess.rot}, the transport-Stokes equation \eqref{eq:transport-Stokes} has been derived in \cite{Hofer18MeanField, Mecherbet19}. The result in \cite{Hofer18MeanField} covers the cases $\gamma_\ast \in (0,\infty]$, whereas \cite{Mecherbet19} is able to allow for a larger class of initial configurations for $\gamma_\ast$ sufficiently small. Perturbatively, the effective increase of the viscosity has been shown to persist in the dynamical setting in \cite{Hofer&Schubert}.

In \cite{Hofer18MeanField, Mecherbet19, Hofer&Schubert}, suspensions of spherical particles are studied. 
The recent paper \cite{Duerinckx23} considers the sedimentation of non-spherical and possibly active particles in a perturbative setting where both gravity and activeness have an effect of order $\phi$ on the fluid. Notably, the activeness as well as the particle orientation dependent viscosity lead to non-Newtownian behavior.
For related results of non-Newtonian effects (also due to thermal noise) we refer to \cite{Girodroux-Lavigne22, HoeferLeocataMecherbet22, HoferMecherbetSchubert22}.

Another related question concerns the sedimentation of stratified suspensions in \emph{bounded domains}\footnote{For mathematical convenience, also suspensions on the torus or of infinitely many particles in the whole space are considered}, i.e. suspensions that are uniformly distributed in directions orthogonal to the direction of gravity. In this case, the particle interaction does not increase but \emph{hinder} the settling speed which remains of the same order as for an isolated particle. The variance of the settling velocity diverges with the container size, though. We refer to \cite{Gloria21,DuerinckxGloria22,HillairetHoefer23} for mathematical results (in a static setting).

\medskip

Coming back to the microscopic system with inertia, \eqref{eq:Xi}--\eqref{eq:fluid.micro.rot},
the formal mean-field limit for $\lambda_N \to \lambda_\ast \in (0,\infty)$ and $\gamma_N \to \gamma_\ast \in (0,\infty)$ is the Vlasov-Stokes system
\begin{align} \label{eq:Vlasov.Stokes}
\left\{\begin{array}{rl}
    \partial_t f + v \cdot \nabla_x f + \lambda_\ast {\dv}_v((g+6 \pi \gamma_\ast(u_\ast- v))f) &= 0, \\
    -\Delta u_\ast + \nabla p_\ast = 6 \pi \gamma_\ast \int (v-u_\ast) f \dd v, ~ \dv u_\ast &=0, \\
    f(0) &= f^0.
    \end{array}\right.
\end{align}
Currently, the rigorous derivation of the Vlasov-Stokes system starting from \eqref{eq:Xi}--\eqref{eq:fluid.micro.rot} seems out of reach owing to the singular and implicit nature of the particle interaction through the fluid.
We refer to \cite{BernardDesvillettesGolseRicci17, BernardDesvillettesGolseRicci18} for a formal derivation from the Boltzmann equation and to  \cite{FlandoliLeocataRicci19,FlandoliLeocataRicci21} for a derivation of the Vlasov-Navier-Stokes-(Fokker-Planck) equation from an intermediate fluid model where the particle-fluid interaction is modeled via an explicit kernel with cutoff singularity  instead of boundary conditions between the particles and the fluid.

In the accompanying paper \cite{HoferSchubert23b}, we provide a perturbative derivation of the Vlasov-Stokes system \eqref{eq:Vlasov.Stokes} for very small particle inertia. In the same setting considered here, we show that the Vlasov-Stokes system \eqref{eq:Vlasov.Stokes} with parameter $\lambda_N$ is a better approximation for the microscopic system than the transport Stokes system \eqref{eq:transport-Stokes}.

\subsubsection*{Related results on problems with vanishing particle inertia}

Limits of vanishing inertia (also called massless limits) have been considered in various situations.
On the level of the effective equation, passing from Vlasov-Stokes \eqref{eq:Vlasov.Stokes} to transport-Stokes \eqref{eq:transport-Stokes}, has been made rigorous in \cite{Hofer18InertialessLimit}. For related results, we refer to \cite{GoudonJabinVasseur04a, GoudonJabinVasseur04b, Jab00,Gou01,HanKwanMichel21, Ertzbischoff22}.

Starting from microscopic models, limits of vanishing particle inertia for  particles immersed in a fluid have been studied for example in \cite{GonzalesGrafMaddocks04, GlassSueur19, HoeferPrangeSueur22, FeireislRoyZarnescu22, BravinNecasova22} (see also the references therein).
In \cite{GonzalesGrafMaddocks04}, the case of a single rigid body of fixed shape in a Stokes flow is studied and 
\cite{HoeferPrangeSueur22} deals with several particles shrinking to curves in a Stokes flow. On the other hand, \cite{FeireislRoyZarnescu22, GlassSueur19, BravinNecasova22}  are concerned with  one or several particles shrinking to points in an incompressible viscous and inviscid  fluid, respectively.

A combined small-inertia/mean-field limit has recently been studied in \cite{CarilloChoi21} where the particles interact  via binary interaction and alignment forces which are assumed to be \ad{either Lipschitz continuous or given through repulsive Coulomb or Riesz potentials}.
Similar models with noise have been treated in \cite{WangLvWei22}. 
In contrast, the implicit and singular nature of the interaction in \eqref{eq:Xi}--\eqref{eq:fluid.micro.rot} poses the main difficulties for our analysis.

We also mention the related works \cite{HanKwanIacobelli21, Rosenzweig23}, where the Euler equation is obtained in a supercritical mean-field limit of Coulomb interaction that can be viewed as a combined mean-field and quasineutral limit. This has the common feature with our result that in the mean-field limit the empirical particle density concentrates  in a Dirac distribution in the velocity space. However, this concentration is driven by strong interaction between the particles rather than by the high friction in our case.

\subsubsection*{Results on the transport-Stokes equation}

The well-posedness of the transport-Stokes equation in $L^1(\R^3)\cap L^\infty(\R^3)$ for initial data with bounded first moment has been shown in \cite{Mecherbet21}, and subsequently in \cite{Hofer&Schubert} without the moment restriction. Data in $L^1(\R^3)\cap L^q(\R^3)$ for $q\ge 3$ still provide enough integrability for the velocity field to be (log-)Lipschitz. Consequently well-posedness was extended to data $\rho^0 \in L^1(\R^3)\cap L^q(\R^3)$ for $q \geq 3$ in \cite{MecherbetSueur22}, where also analyticity of trajectories and controllability is provided. Finally, the recent \cite{Inversi23} proves existence of Lagrangian solutions for even lower regularity ($L^1(\R^3)$) of the data, where uniqueness is lost. Well-posedness for the transport-Stokes system has been extended to (partially) bounded domains \cite{Leblond22}, two dimensions \cite{Grayer22}, and for fractional Stokes operators \cite{Cobb23}. Formulated for densities that are essentially the sum of two scaled characteristic functions (patch problem), the transport-Stokes dynamics become a free-boundary problem which has been analyzed with regard to well-posedness and long-time asymptotics in \cite{AMY00, Mecherbet21,GancedoGranero-BelinchonSalguero22}. In \cite{DalibardGuillodLeblond23}, long-time asymptotics for settling of densities close to a vertically linear profile which allow for a convenient linearization of the problem, are proved.

\subsection{Statement of the main results}

For given  initial data $X_i^0, V_i^0$, $1 \leq i \leq N$ (implicitly depending on $N$), that satisfy the no-touch condition
\begin{align}  \label{nonoverlapping}
       |X_i^0 - X_j^0| > 2R,
\end{align}
standard arguments provide a unique solution $u$, $X_i, V_i$ of \eqref{eq:acceleration},\eqref{eq:fluid.micro} until the first time two particles collide. (In fact, it was recently shown in \cite{HillairetSabbagh22} that such collisions do not occur in finite time and there is a global solution.)

With the solution we associate the empirical density
\begin{align}
    f_N(t) = \frac 1 N \sum_i \delta_{X_i(t)} \otimes \delta_{V_i(t)} 
\end{align}
and its first marginal
\begin{align}
    \rho_N(t) = \frac 1 N \sum_i \delta_{X_i(t)}.
\end{align}
Moreover, we introduce the minimal distance between the particles (which implicitly depends on $N$) 
\begin{align}
    \dmin(t) = \min_{i \neq j} |X_i(t) - X_j(t)|.
\end{align}

We make the  following assumptions on the scaling of $\gamma_N$, $\lambda_N$ and a given sequence of initial configuration of particles.
\begin{align} 
    \label{ass:gamma} \tag{H1}
    &\exists C_\gamma: ~ \forall N \in \N: ~ \frac 1 {C_\gamma} \leq  \gamma_N:=  NR \leq C_\gamma, \\[5pt]
	&\exists q> 3, \rho^0 \in \mathcal P(\R^3) \cap L^q(\R^3): \\\label{ass:Wasserstein.00}  \tag{H2}
    &\qquad \quad \lim_{N \to \infty} \bra{\W_2(\rho_N(0), \rho^0)  + \frac 1 {\lambda_N}}^{\frac 1 {3 + 2q'}}\left(1 + \frac 1 {N^{1/3}\dmin(0)}\right) = 0, \quad  \\[10pt]
    \label{ass:V.Lipschitz} \tag{H3}
    &\forall N \in \N: ~ \forall\, 1 \leq i,j \leq N: ~ |V_i^0 - V_j^0| \leq 3 \pi \gamma_N \lambda_N |X_i^0 - X_j^0|,   \\[7pt] 
    \label{ass:V_infty}\tag{H4}
    &\exists C_V > 0: ~ \forall N \in \N: ~ \frac 1 N  |V^0|_2^2 \rs{+ \frac {R^2} N |\Omega^0|_2^2}  + \frac{1}{\lambda_N}|V^0|_\infty \leq C_V.
\end{align} 
In assumption \eqref{ass:Wasserstein.00}, $\mathcal P(\R^3)$ denotes the space of probability measures on $\R^3$ and $\W_2$ denotes the $2$-Wasserstein metric (for a definition, also for general $p$-Wasserstein distances and their properties we refer to \cite{Santambrogio15}). The exponent $q'$ is the H\"older dual of $q$ with $\tfrac 1q+\tfrac 1{q'}=1$. Moreover, we denote by $|\cdot|_2$ the standard Euclidean norm and by $|\cdot|_\infty$ the maximum norm for vectors in $(\R^3)^N$ (see \eqref{euclidean} and \eqref{maximum} below).
For a discussion of the assumptions we refer to Remark~\ref{rem:assumptions} below.

In order to formulate the main result, we finally introduce \rs{$(\tilde V,\tilde \Omega)\in (\R^3)^N \times (\R^3)^N$}, given by
\begin{align}\label{eq:V_tilde.rot}
\left\{\begin{array}{rl}
		- \Delta v_N + \nabla p = 0, ~ \dv  v_N &=0 \quad \text{in} ~ \R^3 \setminus \bigcup_i B_i, \\
		  v_N(x) &= \tilde V_i \rs{+\tilde \Omega \times (x- X_i)} \quad \text{in} ~  B_i, \\
		 -\int_{\partial  B_i} \sigma[ v_N] n \dd  \mathcal{H}^2  &= \frac g N, \quad \text{for all} ~ 1 \leq i \leq N, \\
\rs{-\int_{\partial  B_i} (x-X_i) \times \sigma[ v_N] n \dd  \mathcal{H}^2  }&\rs{= 0, \quad \text{for all} ~ 1 \leq i \leq N,}
\end{array}\right.
\end{align}
which is the instantaneous inertialess velocities of the particles associated to their momentary position.

\begin{theorem} \label{th:diagonal}
For $N \in \N$ assume that $\lambda_N$, $\gamma_N$, $X_i^0, V_i^0$  satisfy \eqref{ass:gamma}--\eqref{ass:V_infty}.
Let $(\rho_\ast,u_\ast) \in L^\infty((0,T);L^q(\R^3))\times L^\infty((0,T);\dot H^1(\R^3))$ be the unique weak solution of \eqref{eq:transport-Stokes} associated to the initial value $\rho^0$ and with $\gamma_\ast$ replaced by $\gamma_N$. Then, there exists $C > 0$ depending only on the constants from \eqref{ass:gamma}--\eqref{ass:V_infty} and on $\|\rho^0\|_{L^q(\R^3)}$ such that the following holds. For all $T \geq 0$ there exists $N_0$ with the property that for all $N \geq N_0$ and all $t \leq T$ it holds that
\begin{align} \label{W_2.simplified}
    \W_2 (\rho_N(t),\rho_\ast(t)) &\leq C \left(\W_2 (\rho_N(0),\rho^0) +\frac{1}{\lambda_N}\right) e^{C t}.\quad
\end{align}
Moreover the following refined estimates hold

\begin{align} \label{W_2.thm}
    &\W_2 (\rho_N(t),\rho_\ast(t)) 
     \leq  \Biggl(\W_2 (\rho_N(0),\rho^0)  \\
     &\hspace{1cm}\left.+ C  \min\left\{\frac 1 {\lambda_N},t\right\}\left(\frac{\rs{|(V - \tilde V,R^2 (\Omega - \tilde \Omega))|_2(0)} }{\sqrt N} +    t \right) + C R t\right) e^{C  t},\\[0.3cm]
   & \dmin(t) \geq \frac{\dmin(0)} C  e^{-C t}, \label{dmin.thm}
\end{align}
and, for all $x\in \R^3$,
\begin{align}
    &\W_2(f_N(t),\rho_\ast(t)\otimes\delta_{(u_\ast(t)+g/(6\pi\gamma_N))})+\|u_N(t) - u_\ast(t)\|_{L^2(B_1(x))} \\
    &\quad \leq C\left( \W_2 (\rho_N(0),\rho^0) + \frac{\rs{|(V - \tilde V,R^2 (\Omega - \tilde \Omega))|_2(0)}}{\sqrt N} e^{- \frac{\lambda_N t}{C}} +\frac 1 {\lambda_N} \right) e^{Ct}. \qquad \label{u.thm}
\end{align}
\end{theorem}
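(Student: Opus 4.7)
My strategy is to run a coupled Gr\"onwall argument for three quantities simultaneously: the relative kinetic energy
\[
  E_N(t) := \frac 1 N \bra{|V(t) - \tilde V(t)|_2^2 + R^2 |\Omega(t) - \tilde \Omega(t)|_2^2},
\]
the Wasserstein distance $\W_2(\rho_N(t), \rho_\ast(t))$, and the reciprocal of the minimal distance $\dmin(t)$. The driving mechanism is the coercivity of the Stokes drag: each particle feels, up to corrections controlled by $R/\dmin$ and by nearby particles, an effective friction $-6\pi R(V_i-\tilde V_i)$. When inserted into \eqref{eq:Vi}--\eqref{eq:Oi}, this produces the dissipative ODE $\tfrac{\d}{\d t}E_N \le -c\lambda_N E_N + (\text{sources})$, whose $\lambda_N$-fast relaxation is precisely the reason one may neglect particle inertia.

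First I would make the Stokes-law heuristic rigorous by proving a representation of the form
\[
  -\int_{\partial B_i} \sigma[u_N]\, n \dd \mathcal H^2 \;=\; \frac{g}{N} + 6\pi R (V_i - \tilde V_i) + \mathrm{err}_i,
\]
together with the analogous torque identity. Here $\mathrm{err}_i$ is obtained by testing the Stokes problem for $u_N-v_N$ (with $v_N$ the auxiliary field from \eqref{eq:V_tilde.rot}) against suitable extensions; the errors are quadratic in the relative velocities and controllable in terms of $R/\dmin$. Substituting into Newton's law and using that the defining condition for $\tilde V$ cancels the gravity contribution, I obtain an evolution of the form $\dot V_i - \dot{\tilde V}_i \approx -6\pi\gamma_N\lambda_N(V_i-\tilde V_i)+\ldots$. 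Testing with $V-\tilde V$ (and the analogous torque calculation with $\Omega-\tilde\Omega$) yields the announced energy inequality. The quantity $\dot{\tilde V}$ is computed by differentiating \eqref{eq:V_tilde.rot} in time along the particle flow, and a standard shape-derivative argument bounds it in terms of the current particle velocities, $\|\nabla u_\ast\|_\infty$, and a function of $\dmin^{-1}$.

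Second, for the Wasserstein estimate I would adapt Hauray's coupling method to the quadratic cost. Choose an optimal transference plan between $\rho_N(0)$ and $\rho^0$ at time $0$, push the continuum half along the characteristics of $u_\ast+g/(6\pi\gamma_N)$ and the atomic half along the true trajectories $\dot X_i=V_i$, and keep the pairing fixed in time. Differentiating the resulting squared coupling cost and decomposing the discrepancy velocity $V_i-(u_\ast(Y_i)+g/(6\pi\gamma_N))$ along the triangle chain $V_i \sim \tilde V_i \sim u_\ast(X_i)+g/(6\pi\gamma_N) \sim u_\ast(Y_i)+g/(6\pi\gamma_N)$, the three increments are bounded, respectively, by $\sqrt{E_N}$, by the static homogenization error (of order $R$ plus a term involving $1/(N^{1/3}\dmin)$), and by $\|\nabla u_\ast\|_\infty |X_i-Y_i|$. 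The assumption $\rho^0\in L^q$ with $q>3$ promotes $u_\ast$ to Lipschitz via $W^{2,q}$-regularity of Stokes, closing the Gr\"onwall. Control of $\dmin$ runs in parallel: comparing $V_i-V_j$ with $\tilde V_i-\tilde V_j$ and using that the latter are Lipschitz in position with constant $\lesssim\gamma_N$ (thanks to the representation of $\tilde V$ by the continuum field plus a correction controlled by $1/\dmin$, and to the initial Lipschitz bound \eqref{ass:V.Lipschitz} propagated in time) yields $\tfrac{\d}{\d t}|X_i-X_j|\geq -C|X_i-X_j|-C\sqrt{E_N}$, from which \eqref{dmin.thm} follows once the integrated $\sqrt{E_N}$-term is shown to be a small fraction of $\dmin(0)$.

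Finally, the main obstacle I anticipate is the careful bookkeeping needed to close the coupled system without a spurious $e^{c\lambda_N t}$ factor. One must exploit the separation of time scales: $E_N$ relaxes on the fast scale $1/\lambda_N$ towards its quasi-equilibrium value $\O(R^2+1/\lambda_N^2)$, while $\W_2$ and $\dmin$ evolve on the slow scale $\O(1)$. The refined $\min\{1/\lambda_N,t\}$-structure in \eqref{W_2.thm} precisely captures this: for $t\lesssim 1/\lambda_N$ the velocity defect is bounded by its exponentially decaying initial value, whereas for $t\gtrsim 1/\lambda_N$ its running contribution is already $\O(1/\lambda_N)$. Once \eqref{W_2.thm} and \eqref{dmin.thm} are established, \eqref{u.thm} follows by combining the $\W_2(\rho_N,\rho_\ast)$ bound with the pointwise estimate $|V_i-u_\ast(X_i)-g/(6\pi\gamma_N)|\lesssim \sqrt{E_N}\,e^{-\lambda_N t/C}+R+(\text{mean-field error})$, while the local $L^2$ convergence $u_N\to u_\ast$ follows from stability of the Stokes resolvent under perturbation of the particle data, using the control of $\dmin$ to apply the homogenization estimates in the spirit of \cite{Hofer18MeanField}.
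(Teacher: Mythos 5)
Your overall architecture matches the paper's: a modulated energy inequality for $|V-\tilde V|_2$ driven by the coercivity of the resistance matrix, an adaptation of Hauray's coupling to the $2$-Wasserstein cost with the same chain of intermediate velocity fields ($u_N\sim v_N\sim$ single-layer potential $\sim \Phi g\ast\rho_N\sim u_\ast$), and a simultaneous bootstrap on the singular sums. However, there is a genuine gap in your control of $\dmin$, which is the central technical obstacle here. You propose $\tfrac{\d}{\d t}|X_i-X_j|\geq -C|X_i-X_j|-C\sqrt{E_N}$ and hope that $\int_0^t\sqrt{E_N}$ is a small fraction of $\dmin(0)$. Two things go wrong. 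First, the modulated energy only controls $|V-\tilde V|_2$, so for the individual pair realizing the minimum you only get $|V_i-\tilde V_i|\leq |V-\tilde V|_2=\sqrt{2N}\,\sqrt{E_N}$, a factor $\sqrt N$ worse than what you wrote. Second, and more fundamentally, even granting a pointwise bound of order $1/\lambda_N$ after the initial layer, the integrated additive error is of order $t/\lambda_N$, which need \emph{not} be small compared with $\dmin(0)$: assumption \eqref{ass:Wasserstein.00} permits $\lambda_N\to\infty$ arbitrarily slowly while $\dmin(0)$ is as small as roughly $N^{-1/3}$ (up to slowly varying factors), so $t/\lambda_N\gg\dmin(0)$ is entirely possible. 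The paper explicitly flags this: the $1/\lambda_N$ error in $\sqrt E$ is too big to yield control on $\dmin$ directly.

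The paper's resolution is the force representation of Lemma \ref{le:force.representation}: $F_i=6\pi R\bigl(V_i-(u)_i\bigr)$, where $(u)_i$ is a weighted average of the \emph{actual} fluid velocity over the annulus $B_{\dmin/2}(X_i)\setminus B_{\dmin/4}(X_i)$. Inserting this into Newton's law gives $\dot V_i=\lambda_N\bigl(g+6\pi\gamma_N((u)_i-V_i)\bigr)$, and integrating by Duhamel lets the self-damping absorb $V_i$ entirely, so that $\bigl|\int_0^t V_i-V_j\,\d s\bigr|\leq \tfrac{1}{6\pi\gamma_N\lambda_N}|V_i^0-V_j^0|+\int_0^t|(u)_i-(u)_j|\,\d s$; the first term is handled by \eqref{ass:V.Lipschitz}, and Lemma \ref{lem:(u)_i}(ii) shows $|(u)_i-(u)_j|\ls d_{ij}\bigl(|F|_\infty S_2+\dots\bigr)$ is genuinely \emph{proportional to} $d_{ij}$ with no additive remainder, because it is a difference of local averages of a field that is smooth away from the particles. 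This yields a purely multiplicative Gronwall inequality for $\dmin$, independent of the size of $\sqrt{E_N}$. Without this ingredient (or a substitute of comparable strength, e.g.\ an $\ell^\infty$ analogue of the modulated energy that beats $1/\lambda_N$, which is not available), your scheme cannot close under the stated hypotheses. A minor additional remark: your claimed error in the Stokes-law representation being ``quadratic in the relative velocities'' is inaccurate — the off-diagonal part of $\mR(V-\tilde V)$ is linear in $V-\tilde V$; this does not harm the energy argument, which uses the coercivity of the full matrix $\mR$, but it matters if you intend to treat those terms as negligible.
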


 We comment first on the statement of the theorem in Remark~\ref{rem:thm} before discussing the assumptions in Remark~\ref{rem:assumptions}.
\begin{rem} \label{rem:thm}
\begin{enumerate}[(i)]
    \item 
We recall that well-posedness of the transport-Stokes system for data $\rho^0 \in L^1\cap L^q$ for $q \geq 3$ has been shown in \cite{MecherbetSueur22}. 

\item The macroscopic quantities $(\rho_\ast,u_\ast)$ depend implicitly on $N$ through $\gamma_N$. We replace $\gamma_\ast$ by $\gamma_N$ in \eqref{eq:transport-Stokes}  in order to avoid an error that depends on $\gamma_N^{-1}-\gamma_\ast^{-1}$.

\item As will become clear in the proof of Theorem \ref{th:diagonal}, estimate \eqref{W_2.thm} implies \eqref {W_2.simplified}. In contrast to \eqref{W_2.simplified}, the estimate \eqref{W_2.thm} is sharp at $t=0$.

\item\label{it:errors} The three errors in \eqref{W_2.thm} additional to $\W_2 (\rho_N(0),\rho^0)$ account for different effects. 
The term involving \rs{$(V - \tilde V,R^2 (\Omega - \tilde \Omega))(0)$}  reflects an initial  layer at $0 \leq t \leq 1/\lambda_N$ in which the particle inertia induces an error of order $1/\lambda_N$ unless the data are well-prepared in the sense of \rs{$|(V - \tilde V,R^2 (\Omega - \tilde \Omega))(0)|_2  \ll \sqrt N$}. 
The other error of order $1/\lambda_N$ in \eqref{W_2.thm} is simply due to neglecting the inertia in the limit system. The last error of order of the particle radius $R$ can be viewed as a discretization error.

\item In view of the results in \cite{Hofer&Schubert}, one could expect an error of order $\phi_N = N R^3$ in addition to the error $R$ in \eqref{W_2.thm} due to effective increase of the viscosity of the suspension. However, assumption \eqref{ass:gamma} implies $\phi_N \ll R$, such that this error is of lower order in the present setting.

\item Estimate \eqref{u.thm} shows that both the fluid velocity and the particle velocities (up to self-interaction) converge to the limiting fluid velocity $u_\ast$. In particular, after an initial layer, the phase space density $f_N$ becomes monokinetic up to some errors. As for the spatial density, the error of order $1/\lambda_N$ is optimal since we neglect the particle inertia in the limit system. This is in contrast with the better estimate for the distance to the solution of the \emph{Vlasov-Stokes equation} which we show in \cite[Theorem 2.6]{HoferSchubert23b}.

\item One can improve the Lebesgue norm in \eqref{u.thm} to $L^p_\loc$ for $p <3$ (which is the usual threshold in view of integrability of the Oseen tensor) at the cost of slightly strengthening assumption \eqref{ass:Wasserstein.00}. 
See Remark \ref{p>2} for the detailed statement.

\item In the proof, we also show the following bound on the particle velocities and drag forces
    \begin{align} \label{Propagation.velocities}
        \forall 1 \leq i \leq N: \quad  |N F_i(t)| + \abs{V_i(t)} \leq C \left(1 + |V_i^0| e^{- \lambda_N \gamma_N t }\right),
    \end{align}
    where $F_i$ is the drag force particle $i$ exerts on the fluid, see \eqref{def:F}.

\end{enumerate}
\end{rem}

\begin{rem}\label{rem:assumptions}
\begin{enumerate}[(i)]
\item Assumption \eqref{ass:gamma} implies that the particle interaction strength stays finite. At the same time, \eqref{ass:Wasserstein.00} implies $\lambda_N \to \infty$. Together this implies that the particle inertia vanishes asymptotically.

We restrict ourselves to the case $\gamma_N \sim 1$ even though the Stokes number that quantifies the inertial forces on the particles is given by $(\lambda_N \gamma_N)^{-1}$ (and even by $(\lambda_N \gamma_N^2)^{-1}$ for $\gamma_N \ll 1$ as can be seen by an appropriate rescaling of the equations in this case). The case $\gamma_N \ll 1$ corresponds to a very dilute regime in which the particles essentially behave like single particles in the flow (see \cite{JabinOtto04}). 
On the other hand, in the case $\lambda_N \sim 1$, $\gamma_N \to \infty$, the transport-Stokes  equation \eqref{eq:transport-Stokes} is not the expected limit system but rather a system where an inertial term appears in the fluid equation in \eqref{eq:transport-Stokes}. On the level of the macroscopic equations, this has been formally observed in \cite{Hoferthesis} and proved (including the fluid inertia but without gravity) in \cite{HanKwanMichel21}.

Unfortunately, we are presently not able to treat the case when both $\lambda_N \to \infty, \gamma_N \to \infty$. Roughly speaking, the main additional difficulty in this case consists in estimates of the forces  $F_i = -\int_{\partial B_i} \sigma [u] n \dd \mathcal H^2$ that the particles exert on the fluid, which we use for controlling $\dmin$. 
 By a local version of the Stokes law, this force is given by $F_i = 6 \pi R(V_i-(u)_i )$ where $(u)_i$ is some local average of the velocity field $u$ around $B_i$ (cf. Lemma \ref{le:force.representation}). For $\gamma_N = NR \sim 1$, $|F_i| \lesssim  1/N$ follows then from uniform bounds on $(u)_i$ and $V_i$. For  $\gamma_N \to \infty$, though, one would need to show the expected cancellation  $V_i-(u)_i\sim \gamma_N^{-1}$ in order to show $|F_i| \lesssim  1/N$.

\item Assumption  \eqref{ass:Wasserstein.00}, is reminiscent of the conditions in \cite[Theorem 2.1]{Hauray09} and \cite[Theorem 2.4]{Hofer&Schubert} for first-order mean-field limits, but with the $2$-Wasserstein distance instead of the $\infty$-Wasserstein distance,  and $\rho^0 \in L^q(\R^3)$, $q > 3$ instead of $\rho^0 \in L^\infty(\R^3)$. Let us emphasize that $q >3$ is the minimal assumption that guarantees that the fluid velocity $u$ satisfies $\nabla u \in L^\infty((0,\infty) \times \R^3)$ and therefore allows to define characteristics by the Cauchy-Lipschitz theorem. The well-posedness result in \cite{MecherbetSueur22} includes the borderline case $q = 3$, but our current methods do not seem to allow to include this case.

Under the condition $\dmin(0) \gtrsim N^{-1/3}$, assumption \eqref{ass:Wasserstein.00} just requires $\W_2(\rho^0, \rho_N(0)) \to 0$ and $\lambda_N^{-1} \to 0$. 
On the other hand, let us point out that since $\rho_N$ is discrete whereas $\rho^0$ is continuous with respect to the Lebesgue measure, we have the \rh{(sharp)} discretization error 
\begin{align} \label{Wasserstein.lower.bound}
    \W_2(\rho_N(0),\rho^0) \geq c N^{\rh{-1/3}},
\end{align}
where $c$ is a constant that depends only on \rh{$\rho^0$} (cf. \cite[equation (1.11)]{Hofer&Schubert} for $q=\infty$). \rh{Indeed, let $c' > 0$ be such that $\mathcal L^n (\supp \rho^0) > c'^3 \mathcal L (B_1(0))$ and let $M$ be the set of all measurable subsets of $\R^3$ with $\mathcal L^n(A) \leq c'^3 \mathcal L (B_1(0))$.  
Then, 
\begin{align}
     \inf_{A \in M}  \int_{\R^3 \setminus A} \dd \rho^0 =: \delta > 0.
\end{align}

For any admissible transport plan $\gamma$ between $\rho_N(0)$ and $\rho^0$, we have
\begin{align}
    \int_{\R^3 \times \R^3} |x-y|^2 \dd \gamma(x,y)& \geq  \int_{\R^3 \times (\R^3 \setminus \cup_{i=1}^N B_{c' N^{-1/3}}(X_i)) } |x-y|^2 \dd \gamma(x,y) \\
    &\geq 
    \int_{\R^3 \setminus \cup_{i=1}^N B_{c N^{-1/3}}(X_i) } (c' N^{-1/3})^2 \dd \rho^0 \\
    &\geq  (c' N^{-1/3})^2 \inf_{A \in M}  \int_{\R^3 \setminus A} \dd \rho^0 \geq c'^2 \delta N^{-2/3}.
\end{align}

}

Thus, \rh{combining \eqref{Wasserstein.lower.bound} and} \eqref{ass:Wasserstein.00} implies  
\begin{align} 
    \dmin(0) \gg N^\rh{{-\frac 2 3 \frac {3q-2} {5q-3}}},
\end{align}
In particular, \rh{our assumptions require the asymptotic lower bound (which is sharp for $q = \infty$)}
\begin{align} \label{dmin.threshold}
    \dmin(0) \gg N^{\rh{-2/5}}
\end{align}
Moreover, combining with \eqref{ass:gamma} yields for all $N$ sufficiently large
\begin{align} \label{theta}
        \dmin(0) \gg R.
\end{align}
In particular, the particles do not overlap initially for $N$ sufficiently large.

Comparing to \cite{Hofer18MeanField, Hofer&Schubert}, where we assumed $\dmin \gtrsim N^{-1/3}$, 
assumption \eqref{ass:Wasserstein.00} is less stringent. We point out that under the assumption $\gamma_N \lesssim 1$, the assumption $\dmin \gtrsim N^{-1/3}$ in \cite{Hofer&Schubert} could be -- up to a logarithmic correction -- relaxed  towards
\begin{align}\label{eq:Wasserstein_lower_bound}
 \lim_{N \to \infty} \W_\infty(\rho^0, \rho_N(0))^{\frac 1 2}\left(1 + \frac{1}{N^{\frac 1 3 }\dmin}\right) = 0.
\end{align}
Just as in  \cite{Mecherbet19}, this allows to push the scaling of $\dmin$ up to $N^{-1/2}$ under the assumption that $\gamma_N$ is merely bounded  whereas in  \cite{Mecherbet19} $\gamma_N$ needs to be sufficiently small.

\item Assumptions \eqref{ass:V.Lipschitz} and \eqref{ass:V_infty} ensure  that during an initial layer in time of order $1/ \lambda_N$ where the system behaves very differently from its inertialess counterpart, the
particle density does not behave too wildly.
In particular, \eqref{ass:V.Lipschitz} ensures that the minimal distance does not implode during this initial layer. Note that the estimate on $|V_i^0|$ in \eqref{ass:V_infty} follows from assumption \eqref{ass:V.Lipschitz} and the bound on the kinetic energy in \eqref{ass:V_infty} provided the particle cloud is uniformly bounded in space.
 \end{enumerate}
\end{rem}

\subsection{Elements of the proof and outline of the the paper} \label{sec:strategy}
The proof of Theorem~\ref{th:diagonal} is given in Section \ref{sec:proof.main} together with key estimates for that proof. Nevertheless, we already explain the main strategy of the proof in this subsection without introducing much notation. \\

\rs{For the sake of simplicity of the presentation of the proof, we replace the models \eqref{eq:Xi}--\eqref{eq:fluid.micro.rot} and \eqref{eq:inertialess.rot} by corresponding models, where the particles do not rotate.  More precisely, we replace \eqref{eq:Xi}--\eqref{eq:fluid.micro.rot} by
\begin{align}	\label{eq:acceleration}
    &\dot X_i = V_i, \qquad \dot{V}_i = \lambda_N \left(g +  \frac{\gamma_N}{R} \int_{\partial B_i} \sigma[u_N] n \dd  \mathcal{H}^2 \right),\\
\label{eq:fluid.micro}
&\left\{\begin{array}{rl}
		- \Delta u_N + \nabla q_N = 0, ~\dv u_N &=0 \quad \text{in} ~ \R^3 \setminus \bigcup_i B_i, \\
		  u_N(x) &= V_i\quad \text{in} ~ B_i.
\end{array}\right.
\end{align}
We then prove the main theorem with all $\tilde \Omega$ removed and with $\tilde V$ being defined through the solution $v_N \in  \dot H^1(\R^3)$ to
\begin{align}\label{eq:V_tilde}
\left\{\begin{array}{rl}
		- \Delta v_N + \nabla p = 0, ~ \dv  v_N &=0 \quad \text{in} ~ \R^3 \setminus \bigcup_i B_i, \\
		  v_N(x) &= \tilde V_i \quad \text{in} ~  B_i, \\
		 -\int_{\partial  B_i} \sigma[ v_N] n \dd  \mathcal{H}^2  &= \frac g N, \quad \text{for all} ~ 1 \leq i \leq N, 
\end{array}\right.
\end{align}
Neglecting particle rotations does not affect the limit system to leading order in the particle volume fraction. Indeed, in \cite{Hofer18MeanField}, the transport-Stokes equation \eqref{eq:transport-Stokes} has been derived from an inertialess microscopic system as in \eqref{eq:inertialess.rot} but without particle rotations. The same limit has been obtained from \eqref{eq:inertialess.rot} including the rotations in \cite{Mecherbet19}. Only in the related result in \cite{Hofer&Schubert} where the influence of the the increase of the viscosity is studied, particle rotations play an important role. In the scaling considered in the present paper, this effect, proportional to the particle volume fraction, is negligible. 
Appendix \ref{sec:rot} contains the necessary adaptions of the proof to rotating particles.} \\

\textit{Modulated energy argument.} The starting point for the proof of Theorem~\ref{th:diagonal} is a modulated energy argument for the difference of the particle velocities with inertia (denoted by $V$) and without inertia (denoted by $\tilde V$).
\rs{Modulated/relative energy/entropy techniques are often performed for the study of singular limits in particular for kinetic equations. For a similar modulated energy argument in the same context of particles with small inertia interacting through a Stokes flow, we refer to \cite{HoeferPrangeSueur22}. For a related modulated energy argument in the context of mean-field limits with vanishing inertia, we refer to to \cite{CarilloChoi21}.

On the continuous level, a similar modulated/relative energy argument can be used to derive the hydrodynamic limit of the Vlasov-Stokes to the transport-Stokes equation. We refer to \cite{HoferSchubert23b} for details and a discussion of related results and techniques.

In the seminal paper \cite{Serfaty20}, a modulated energy argument is used to deal with first-order mean-field limits with Coulomb and Riesz type repulsive interaction. We emphasize that the modulated energy in \cite{Serfaty20} is quite different (it connects the microscopic and macroscopic energies) from ours. Moreover, the singularity of the  interaction in our case is sufficiently weak that we are able to deal with it through an adaptation of a classical method by Hauray.}

We show that the modulated energy
\begin{align} \label{E}
    E\coloneqq \frac 1{2N} \sum_i \abs{V_i-\tilde V_i}_2^2
\end{align}
satisfies the differential inequality
\begin{align} \label{diff.E}
    \frac {\dd E} {\dd t}\le -\frac{\lambda_N} C E+ C \sqrt E.
\end{align}
Here,  $C$ is a constant that depends on the behavior of the so called $N$-particle resistance matrix $\mathcal R \in \R^{3N \times 3N}$  which is the velocity-to-forces mapping   defined through solving the $N$-particle Stokes problem. More precisely the constant $C$ involves the coercivity of $\mathcal R$ as well as the derivative of  $\mathcal R$ with respect to the particle positions.
We therefore need uniform estimates on these quantities at least as long as we can propagate the minimal distance condition \eqref{ass:Wasserstein.00}.
Section \ref{sec:EstR} is devoted to these estimates. The estimates on the derivative of   $\mathcal R$ are obtained through the method of reflections similarly as in \cite{HoeferLeocataMecherbet22}.

The differential inequality \eqref{diff.E} yields control of $\rs{\sqrt{E}}$ up to an error of order $1/\lambda_N$ and after some initial layer in time. 
The challenge is to leverage this control to compare the microscopic system with the solutions of the transport-Stokes equation.
In our previous work \cite{Hofer&Schubert} (see also \cite{Mecherbet19} and \cite{Hofer18MeanField}), where we started from the inertialess microscopic dynamics \eqref{eq:inertialess.rot}, this has been achieved by adapting Hauray's method \cite{Hauray09} in order to simultaneously control the $\infty$-Wasserstein distance $\mathcal W_\infty(\rho_N,\rho)$ and the minimal particle distance $\dmin$ by a buckling argument. 

In the present setting, we face two additional difficulties. Firstly, we cannot work with the $\infty$-Wasserstein distance, since control on $\rs{\sqrt{E}}$ only fits well with the $2$-Wasserstein distance. Secondly, the error of order $\rs{1/}\lambda_N$ in the control of $\rs{\sqrt{E}}$ is too big in order to directly yield a control on $\dmin$ unless we assume a very fast rate of convergence $\lambda_N \to \infty$. \\

\textit{Generalization of Hauray's method to $p$-Wasserstein distances.}  As stated in Theorem~\ref{th:Hauray}, it turns out that Hauray's method can be adapted without much difficulty to using any $p$-Wasserstein distance. This might seem somewhat surprising since by definition only the $\infty$-Wasserstein distance gives uniform control on every single particle trajectory. (cf. \cite{CarrilloChoiHauray14}: \enquote{Because of the singularity in the interaction force, the natural transport distance to use
is the one induced by the $W_\infty$-topology}). However, this is actually not needed. The key ingredient in Hauray's buckling argument is an  estimate of the form
\begin{align}\label{eq:sumcontrol}
    \frac 1 N \sup_i \sum_{j \neq i} \frac{1}{|X_i - X_j|^\beta} \leq C(\rho)
    \left(1 + \frac{(\W_\infty(\rho_N,\rho))^{p_1}}  {(N^{-1/3} \dmin)^{p_2}} \right)
\end{align}
for all $\beta < d$ ($d$ being the space dimension) with some constants $p_1, p_2$ that depend only on $\beta$. We show in Lemma \ref{lem:sums.Wasserstein} that such an estimate persists when one replaces $W_\infty$ by $W_p$ for any $p \in [1,\infty)$. 
Section \ref{sec:Hauray} is devoted to the proof of Hauray's method replacing the $\infty$-Wasserstein distance by any $p$-distance. The result is not directly applicable in our setting due to the small inertia (and additional errors owing to the implicit nature of the interaction through the fluid). We still include the result, Theorem 2.1, to shed more light on this aspect of the proof of the main theorem and  because we think that it is of independent interest. \rs{In particular, Theorem~\ref{th:Hauray} can be adapted without much effort to a mean-field result for the following second-order binary system with asymptotically vanishing inertia
\begin{align}
    \dot X_i&=V_i,\label{eq:bin_inertial_1}\\
    \dot V_i&=\lambda_N\bra{-V_i+\frac 1N\sum_{j\neq i} K(X_i-X_j)},\label{eq:bin_inertial_2}
\end{align}
where $K$ is a (not too) singular interaction kernel. This is stated in Theorem~\ref{th:binary}. However, there are two important differences of system \eqref{eq:bin_inertial_1}--\eqref{eq:bin_inertial_2} to system \eqref{eq:acceleration}--\eqref{eq:fluid.micro}.

First, (using \eqref{force.repr} below) it is possible to see that using \eqref{eq:bin_inertial_1}--\eqref{eq:bin_inertial_2} is only a good approximation to the sedimentation problem if one already knows that the inertial velocities are comparable to the non-inertial velocities (which is not true in an initial layer in time). A better approximation (which we justify in Section \ref{sec:MinDist}) would include the implicit relations
\begin{align}
    \dot V_i &= \lambda_N\left(g + (u)_i - V_i \right), \label{Stokes.law.implicit}\\
    (u)_i&\approx \sum_{j\neq i} \Phi(X_i-X_j)(V_j - (u)_j), 
\end{align}
where $\Phi$ is the Oseen tensor and $(u)_i\in \R^3$ can be interpreted as mean-field velocity at particle $i$.  

Second, as is apparent from the proof of Theorem~\ref{th:binary}, for the explicit binary interaction \eqref{eq:bin_inertial_1}--\eqref{eq:bin_inertial_2} the modulated energy argument allows to control the velocities strongly in $l_p$ ($p\in[1,\infty]$) (in the sense that they converge in this norm to the inertialess velocities) relative to the initial $l_p$ distance. Thus, the advantage that the $p$-Wasserstein adaption of Hauray's approach gives over the $\infty$-Wasserstein version in the case of binary systems with asymptotically vanishing inertia, is to allow for more general initial data (for example with a bound on the kinetic energy instead of an $l_\infty$ bound).  We stress that for the implicit problem of sedimentation, the modulated energy argument relies on the $l_2$ coercivity of $\mR$ and it therefore seems unclear how to adapt the strategy to  $p=\infty$. 
}\\

\emph{Propagation of the control on the minimal particle distance.} \rs{As we see in the proof of Theorem 2.3, it is not too hard to propagate a minimal distance condition along the lines of Hauray's method for the second order binary system \eqref{eq:bin_inertial_1}--\eqref{eq:bin_inertial_2}. For the system \eqref{eq:acceleration}--\eqref{eq:fluid.micro} the propagation of condition \eqref{ass:Wasserstein.00} is much less straightforward.
The technical ingredients to  tackle this propagation are given in Section \ref{sec:MinDist}.}
Our main working horse is Lemma~\ref{le:force.representation}. It makes the heuristic \enquote{Stokes law} for the drag force rigorous, that the force on each particle is proportional to the difference of the velocity of the individual particle and the mean-field fluid velocity generated by the other particles \rs{(which justifies \eqref{Stokes.law.implicit} above)}. This implies an estimate for the difference of forces of neighbouring particles which is essential to control the minimal distance. However, we need as a second ingredient a uniform bound (after some initial layer in time) on the individual forces that the particles exert on the fluid in order to control this mean-field velocity.
By the Stokes type law of Lemma \ref{le:force.representation}, it is then enough to control $E$ (cf. \eqref{E}) and the $l_\infty$ norm of $V$. Finally,  the $l_\infty$ norm of $V$ we control using once again  Lemma \ref{le:force.representation} since it yields a differential inequality analogous to the one for the modulated energy.

\section{Generalization of Hauray's mean-field limit result to \texorpdfstring{$p$}{p}-Wasserstein distances}

\label{sec:Hauray}

We consider a general interaction kernel $K$ satisfying the conditions
\begin{align} \label{eq:C_alpha} \tag{$C_\alpha$}
	\forall x \in \R^d: |K(x)| + |x| |\nabla K(x)| \leq \frac{C}{|x|^\alpha}, ~ \dv K = 0,
\end{align}
with $\alpha < d-1$. The structural assumption $\dv K=0$ is not strictly necessary, see Remark~\ref{rem:Hauray} below. Then we have the following 

\begin{theorem} \label{th:Hauray}
Let $K \colon \R^d \to \R^d$ satisfy \eqref{eq:C_alpha} for some $\alpha < d -1$ and let $q > \frac{d}{d - \alpha -1}$. Let $\sigma^0 \in \mP(\R^d)\cap L^q(\R^d)$, where $\mP$ denotes the space of probability densities
and let $\sigma\in L^\infty([0,\infty);\mP(\R^d) \cap L^q(\R^d))$ be the global in time solution to 
\begin{align}\label{eq:K_eq}
    \left\{\begin{array}{rl}
		\partial_t \sigma + \dv\bra{(K \ast \sigma)\sigma} &= 0, \\
		\sigma(0) &= \sigma^0.
    \end{array}\right.
\end{align}
Let $X_i^0 \in \R^d$, $1 \leq i \leq N$, and consider the dynamics \ad{(until first collision)} defined through
\begin{align} \label{eq:X_i}
		\dot X_i(t) = \frac 1 N \sum_{j \neq i} K(X_i(t) - X_j(t)).
\end{align}
For $p\in [1,\infty]$ let
\begin{align}
		\sigma_N(t)\coloneqq \frac 1 N \sum_i \delta_{X_i(t)}, \quad \text{and} \quad
		\eta(t) \coloneqq \W_p(\sigma_N(t),\sigma(t)).
\end{align}
Assume that $\eta(0)$ and $\dmin(0)$ satisfy
\begin{align} \label{eq:initial_cond_Hauray}
		\lim_{N \to \infty} \eta(0)^{\frac{p(d-\alpha-1)}{d+pq'}}\bra{1+\dmin(0)^{-(\alpha+1)}N^{-(\alpha+1)/d}} = 0.
\end{align}
Then, for all $T>0$ and all $N$ sufficiently large (depending on $T$) \ad{there is no collision up to time $T$ and} it holds for all $t\in [0,T]$ that:
\begin{equation}\label{eq:eta.d_min.est}
	\begin{aligned}
		\eta(t) &\leq e^{Ct}\eta(0),\\
		\dmin(t) &\geq \dmin(0) e^{-C t},
	\end{aligned}
\end{equation}
	where $C$ depends only on $d$, $\alpha$, $q$, $p$,  the constant in \eqref{eq:C_alpha} and  $\|\sigma^0\|_{L^q}$.
\end{theorem}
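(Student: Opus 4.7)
Since $q > d/(d-\alpha-1)$, Hölder's inequality combined with \eqref{eq:C_alpha} gives $K \ast \sigma \in L^\infty((0,T); W^{1,\infty}(\R^d))$, so \eqref{eq:K_eq} admits a unique Lipschitz Lagrangian flow $\Phi_t$ with $\sigma(t) = \Phi_t \# \sigma^0$. The plan is a continuation (buckling) argument run simultaneously on the two quantities in \eqref{eq:eta.d_min.est}. Fix $T > 0$ and a large constant $K$ to be chosen, and let $T^\ast \in (0, T]$ be maximal such that, for all $t \in [0, T^\ast)$,
\begin{align}
\eta(t) \leq 2 \eta(0) e^{K t}, \qquad \dmin(t) \geq \tfrac 1 2 \dmin(0) e^{-K t}.
\end{align}
I will show that under \eqref{eq:initial_cond_Hauray}, for $N$ large these bounds hold on $[0, T^\ast]$ with the prefactors $2$ and $1/2$ replaced by strictly better constants; continuity then forces $T^\ast = T$.

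Introduce the auxiliary trajectories $\bar X_i(t) := \Phi_t(X_i^0)$ and the measure $\bar \sigma_N := \frac 1 N \sum_i \delta_{\bar X_i}$. By the Dobrushin-type estimate for the Lipschitz flow, $\W_p(\bar\sigma_N(t), \sigma(t)) \leq e^{L t} \eta(0)$. Writing $\W_p(\sigma_N, \bar \sigma_N)^p \leq \frac 1 N \sum_i |X_i - \bar X_i|^p$ and differentiating,
\begin{align}
\frac{d}{dt} \frac 1 N \sum_i |X_i - \bar X_i|^p \leq \frac{p}{N} \sum_i |X_i - \bar X_i|^{p-1} (|D_i| + |E_i|),
\end{align}
where $D_i := \frac 1 N \sum_{j \neq i} [K(X_i - X_j) - K(\bar X_i - \bar X_j)]$ is the \emph{stability} part and $E_i := \frac 1 N \sum_{j \neq i} K(\bar X_i - \bar X_j) - (K \ast \sigma)(\bar X_i)$ is the \emph{consistency} part.

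Using the mean value theorem with $|\nabla K(x)| \leq C|x|^{-\alpha-1}$ gives
\begin{align}
|D_i| \leq \frac{C}{N} \sum_{j \neq i} \frac{|X_i - \bar X_i| + |X_j - \bar X_j|}{\min\{|X_i - X_j|, |\bar X_i - \bar X_j|\}^{\alpha+1}}.
\end{align}
The bootstrap hypothesis combined with \eqref{eq:initial_cond_Hauray} and Lemma \ref{lem:sums.Wasserstein} (applied to both the $X_i$ and $\bar X_i$ configurations) ensure that $M := \sup_{i,t} \frac 1 N \sum_{j \neq i} |X_i - X_j|^{-(\alpha+1)}$ is bounded uniformly in $N$. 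Hölder's inequality in $i$ then yields $\frac{d}{dt} \W_p(\sigma_N, \bar \sigma_N) \leq C M \, \W_p(\sigma_N, \bar \sigma_N) + (\text{consistency error})$. The consistency term $E_i$ is handled through an optimal plan $\gamma$ between $\bar \sigma_N$ and $\sigma$, splitting $K(\bar X_i - y) - K(\bar X_i - z)$ according to whether $|y - z|$ is small compared to $|\bar X_i - z|$; the short-range contribution is bounded by $\int_{B_r(\bar X_i)} |x|^{-\alpha} \sigma(x) \dd x \lesssim \|\sigma\|_{L^q} r^{d - \alpha - d/q}$, which is integrable precisely because $q > d/(d-\alpha-1)$, and averaging in $i$ produces a power of $\W_p(\bar \sigma_N, \sigma)$ absorbable by Gronwall.

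For the minimal distance, fix $i \neq k$ and estimate
\begin{align}
\frac{d}{dt}|X_i - X_k| \leq \frac 1 N \sum_{j \neq i,k} |K(X_i - X_j) - K(X_k - X_j)| + \frac{2}{N}|K(X_i - X_k)|.
\end{align}
The self-term is $\leq C N^{-1} \dmin^{-\alpha}$, which the lower bound \eqref{eq:initial_cond_Hauray} on $\dmin(0)$ makes negligible relative to $|X_i - X_k|$. The sum is bounded by $C M |X_i - X_k|$ via $|K(X_i - X_j) - K(X_k - X_j)| \leq C|X_i - X_k| \min\{|X_i - X_j|,|X_k - X_j|\}^{-\alpha-1}$ and the same $M$ as above. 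Gronwall then gives $\dmin(t) \geq \dmin(0) e^{-CMt}$, which together with the Gronwall bound on $\W_p(\sigma_N, \bar \sigma_N)$ closes the bootstrap once $K$ exceeds all the constants involved. The main obstacle is calibrating Lemma \ref{lem:sums.Wasserstein} so that $M$ stays bounded under the bootstrap: this is what forces the precise threshold exponent $p(d - \alpha - 1)/(d + p q')$ in \eqref{eq:initial_cond_Hauray}, since, unlike in the $\W_\infty$-setting, one has no individual pointwise control of $|X_i - \bar X_i|$ and all singular sums must be estimated in an averaged, distributional sense.
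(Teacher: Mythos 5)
Your overall architecture is sound and parallels the paper's: a simultaneous bootstrap on $\eta$ and $\dmin$, with Lemma \ref{lem:sums.Wasserstein} supplying the uniform bound on $\frac 1N S_{\alpha+1}$ that closes both Gronwall inequalities, and your minimal-distance estimate is essentially identical to the paper's Step 2. The difference is the coupling: the paper never introduces the intermediate measure $\bar\sigma_N = \Phi_t\#\sigma_N(0)$. Instead it transports an initial optimal map along \emph{both} flows, $T_t = Y_N(t,0,\cdot)\circ T_0\circ Y(0,t,\cdot)$, and controls $\|T_t-\mathrm{id}\|_{L^p(\sigma(t))}$ through the single integrand $K(T(x)-T(y))-K(x-y)$. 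This choice is not cosmetic, and it is where your sketch has a genuine gap.

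The problem sits in your consistency term $E_i=\int\bigl(K(\bar X_i-y)-K(\bar X_i-z)\bigr)\dd\gamma(y,z)$. Applying \eqref{eq:K.Lipschitz} produces the two weights $|\bar X_i-y|^{-(\alpha+1)}$ and $|\bar X_i-z|^{-(\alpha+1)}$. The first is a particle-to-particle distance (since $y=\bar X_j$) and is handled by Lemma \ref{lem:sums.Wasserstein}; but the second is \emph{mixed}: $\bar X_i$ is a particle while $z$ is a generic point of $\supp\sigma$, so after averaging in $i$ you face $\frac 1N\sum_i|\bar X_i-z|^{-(\alpha+1)}$ for $\sigma$-a.e.\ $z$, which is unbounded whenever $z$ falls near a particle, and it is paired with the transport cost $|y-z|^p$ so you cannot simply integrate it against $\sigma\in L^q$ without losing the factor of $\W_p$. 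Your proposed splitting (on whether $|y-z|$ is small compared to $|\bar X_i-z|$) resolves only the favourable regime; in the complementary regime one is left with contributions that, when estimated by packing/counting, produce quantities like $N^{-1}\bar d_{\min}^{-d}\W_p^{d-\alpha}$, which are not obviously controlled by hypothesis \eqref{eq:initial_cond_Hauray}. Closing this would require redoing the ``bad set'' mass-counting argument from the proof of Lemma \ref{lem:sums.Wasserstein} (the sets $E_2,E_3$ and estimate \eqref{est:|J|}) in this mixed particle/continuum setting. The paper's coupling is designed precisely so that this mixed term never appears: in $K(T(x)-T(y))-K(x-y)$ the singular weights are either both transported (pure particle sums, controlled by $S_{\alpha+1}$) or both untransported (pure convolutions against $\sigma\in L^q$, controlled by \eqref{eq:fractional.convolution}). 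Either supply the missing mixed-sum estimate, or switch to the paper's one-step coupling, which makes the triangle-inequality decomposition unnecessary.
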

\begin{rem} \label{rem:Hauray}
\begin{enumerate}[(i)]
    \item In the case $p=\infty$, \eqref{eq:initial_cond_Hauray} should be understood as
\begin{align}
    		\lim_{N \to \infty} \eta(0)^{d-\alpha-1}\bra{1+\dmin(0)^{-(\alpha+1)}N^{-(\alpha+1)/d}} = 0.
\end{align} 
This case is contained in \cite{Hofer&Schubert}[Theorem 2.4] which is adapted from \cite{Hauray09}.  

\item For well-posedness of the limiting equation \eqref{eq:K_eq} we refer to \cite{Hauray09} in the case $q=\infty$. The case $q<\infty$ follows from a straightforward adaption of \cite{MecherbetSueur22}.

\item It is possible to relax the structural assumption $\dv K=0$   at the cost of losing global in time existence of the limiting equation (cf. \cite{CarrilloChoiHauray14}).
\rs{\item A combination of the modulated energy method with the strategy of Theorem~\ref{th:Hauray} allows to obtain mean-field results in second-order binary self-damping systems with asymptotically vanishing inertia for physically more relevant assumptions on the initial data. It yields the following statement.}
\end{enumerate}
\end{rem}
\rs{
\begin{theorem}\label{th:binary}
Let $K,\alpha, q,\sigma^0,\sigma$ be as in Theorem \ref{th:Hauray}.
For given initial data $(X^0,V^0)\in (\R^d)^N\times (\R^d)^N$ let $X_i,V_i$ be the solution (until first collision) of the system \eqref{eq:bin_inertial_1}--\eqref{eq:bin_inertial_2}
with corresponding empirical measure 
\begin{align}
		\sigma_N(t)\coloneqq \frac 1 N \sum_i \delta_{X_i(t)}.
\end{align}
Assume that the initial data satisfy for some $p\in[1,\infty]$ the assumptions
\begin{align} 
    \label{ass:bin1}  \tag{D1}
    &\lim_{N \to \infty} \bra{\W_p( \sigma_N(0),\sigma^0)  + \frac 1 {\lambda_N}}^{\frac{p(d-\alpha-1)}{d+pq'}}\left(1 + \frac 1 {N^{\frac{1}d} \dmin(0)}\right)^{\alpha+1} = 0, \quad  \\[10pt]
    \label{ass:bin2} \tag{D2}
    &\forall N \in \N: ~ \forall\, 1 \leq i,j \leq N: ~ |V_i^0 - V_j^0| \leq \frac 12 \lambda_N |X_i^0 - X_j^0|,   \\[7pt] 
    \label{ass:bin3}\tag{D3}
    &\exists C_V > 0: ~ \forall N \in \N: ~ \frac 1 N  |V^0|_p^p \leq C_V.
\end{align}    
Then, for all $T>0$ and all $N$ sufficiently large (depending on $T$) there is no collision up to time $T$ and it holds for all $t\in [0,T]$ that:
\begin{equation}\label{eq:bin_Wass_est}
	\W_p(\sigma_N(t),\sigma(t))\le C\bra{\W_p(\sigma_N(0),\sigma^0)+\frac 1{\lambda_N}}e^{Ct}.
\end{equation}
	where $C$ depends only on $d$, $\alpha$, $q$, $p$,  the constants in \eqref{eq:C_alpha} and \eqref{ass:bin3}, and  $\|\sigma^0\|_{L^q}$.
\end{theorem}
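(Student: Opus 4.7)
The plan is to combine a modulated energy argument with the $p$-Wasserstein adaptation of Hauray's method provided by Theorem~\ref{th:Hauray}. Introducing the instantaneous \enquote{inertialess velocities} $\tilde V_i\coloneqq \frac{1}{N}\sum_{j\neq i}K(X_i-X_j)$, equation \eqref{eq:bin_inertial_2} rewrites as $\dot V_i=\lambda_N(\tilde V_i-V_i)$. I would then define the modulated energy
\[
E_p(t)\coloneqq \frac{1}{N}\sum_{i=1}^N|V_i(t)-\tilde V_i(t)|_2^p\qquad (p<\infty),
\]
with the obvious replacement $E_\infty(t)=\max_i|V_i(t)-\tilde V_i(t)|_2$ when $p=\infty$. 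A direct differentiation and Hölder's inequality yield
\[
\frac{d}{dt}E_p^{1/p}\le -\lambda_N E_p^{1/p}+A,\qquad A\coloneqq\Bigl(\frac{1}{N}\sum_i|\dot{\tilde V}_i|_2^p\Bigr)^{1/p},
\]
where $\dot{\tilde V}_i=\frac{1}{N}\sum_{j\neq i}\nabla K(X_i-X_j)(V_i-V_j)$. Decomposing $V_i-V_j=(\tilde V_i-\tilde V_j)+(V_i-\tilde V_i)-(V_j-\tilde V_j)$ and using $|\nabla K|\lesssim|x|^{-\alpha-1}$ from \eqref{eq:C_alpha}, $A$ can be bounded in terms of $E_p^{1/p}$, $|V|_\infty$, and weighted sums $\frac{1}{N}\sum_{j\neq i}|X_i-X_j|^{-\alpha-1}$, which are in turn estimated through the $p$-Wasserstein analogue of \eqref{eq:sumcontrol} by $\|\sigma\|_{L^q}$, $\W_p(\sigma_N,\sigma)$, and $N^{-1/d}/\dmin$. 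Grönwall then gives, as long as these last two quantities are controlled, $E_p^{1/p}(t)\lesssim E_p^{1/p}(0)e^{-\lambda_N t}+\lambda_N^{-1}$.

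Second, to compare positions I would follow the strategy of the proof of Theorem~\ref{th:Hauray}: pick an optimal transport plan between $\sigma_N(0)$ and $\sigma^0$, let $Y_i(t)$ denote the characteristic of \eqref{eq:K_eq} issued from the corresponding initial point, and set $\tilde\eta(t)\coloneqq\bigl(\frac{1}{N}\sum_i|X_i(t)-Y_i(t)|^p\bigr)^{1/p}$. Because the flow of $K*\sigma$ is (log-)Lipschitz under the assumption $q>d/(d-\alpha-1)$, one has $\W_p(\sigma_N(t),\sigma(t))\le \tilde\eta(t)+Ce^{Ct}\W_p(\sigma_N(0),\sigma^0)$. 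Writing
\[
\dot X_i-\dot Y_i=(V_i-\tilde V_i)+\bigl(\tilde V_i-(K*\sigma)(X_i)\bigr)+\bigl((K*\sigma)(X_i)-(K*\sigma)(Y_i)\bigr),
\]
the modulated energy controls the first term in $l_p$, the (log-)Lipschitz estimate on $K*\sigma$ controls the last by $\tilde\eta$, and the middle term is precisely the Hauray discretization error which, as in Theorem~\ref{th:Hauray}, is bounded via the $p$-Wasserstein sum estimate by a polynomial in $\tilde\eta$ and $N^{-1/d}/\dmin$ times $\|\sigma\|_{L^q}$. This produces a coupled Grönwall system for $(E_p^{1/p},\tilde\eta)$.

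The remaining ingredient is the propagation of $\dmin$. Assumption \eqref{ass:bin2} ensures $|V_i^0-V_j^0|\le (\lambda_N/2)|X_i^0-X_j^0|$, so that during an initial layer $t\lesssim1/\lambda_N$ a direct Grönwall argument on the pair $(|X_i-X_j|,|V_i-V_j|)$ prevents the distance from shrinking by more than a factor $1/2$. For later times, $|V_i-V_j|\le 2E_\infty+|\tilde V_i-\tilde V_j|$, where the first term decays exponentially by the modulated energy and the second is, up to the Hauray sum error, Lipschitz in $|X_i-X_j|$, yielding the exponential lower bound $\dmin(t)\ge \dmin(0)e^{-Ct}/C$. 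Feeding everything into a buckling argument as in the proof of Theorem~\ref{th:Hauray}, run on the maximal time interval on which $N^{-1/d}/\dmin(t)\le 2(1+N^{-1/d}/\dmin(0))$ survives, assumption \eqref{ass:bin1} guarantees that this interval eventually covers any $[0,T]$ for $N$ large, and \eqref{eq:bin_Wass_est} follows.

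The main obstacle I anticipate is the coordination during the initial layer $t\lesssim 1/\lambda_N$: before the modulated energy has decayed, the velocities are only controlled through \eqref{ass:bin2}--\eqref{ass:bin3}, and this weaker information must be threaded through both the Hauray discretization estimate and the $\dmin$ propagation without breaking the buckling — which is precisely the role of assumption \eqref{ass:bin2}.
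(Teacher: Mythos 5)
Your overall architecture is the paper's: a modulated energy for $V-\tilde V$ with $\tilde V_i=\frac1N\sum_{j\neq i}K(X_i-X_j)$, the three-way splitting $V_i-V_j=(\tilde V_i-\tilde V_j)+(V_i-\tilde V_i)-(V_j-\tilde V_j)$, the $p$-Wasserstein singular-sum estimate of Lemma~\ref{lem:sums.Wasserstein} to close the Gronwall inequalities, and a buckling argument on $\frac1N(S_\alpha+S_{\alpha+1})$. The position comparison via macroscopic characteristics and the splitting of $\dot X_i-\dot Y_i$ into the inertial error, the discretization error and the Lipschitz term is also the paper's Step~3 (organized slightly differently but equivalently). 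Two cautions on the energy step: the bound on $A$ should not involve $|V|_\infty$ — assumption \eqref{ass:bin3} gives no $l_\infty$ control on $V^0$ — but with the three-way splitting you describe, $|V|_\infty$ is in fact never needed, since $|\tilde V_i-\tilde V_j|\lesssim d_{ij}\frac1N S_{\alpha+1}$ pairs with $|\nabla K(X_i-X_j)|\lesssim d_{ij}^{-\alpha-1}$ to give $\frac1N S_\alpha\cdot\frac1N S_{\alpha+1}$.

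The genuine gap is in the propagation of $\dmin$ for $p<\infty$. You write that for later times $|V_i-V_j|\le 2E_\infty+|\tilde V_i-\tilde V_j|$ with $E_\infty$ decaying by the modulated energy; but the modulated energy controls only the $l_p$ average $\bigl(\frac1N\sum_i|V_i-\tilde V_i|^p\bigr)^{1/p}$, so $\max_i|V_i-\tilde V_i|$ is only bounded by $N^{1/p}$ times that, and \eqref{ass:bin3} does not give an $O(1)$ bound on $E_\infty(0)$ either; the resulting loss of $N^{1/p}$ is not absorbed by \eqref{ass:bin1}. The repair is to extend the pairwise argument you already use in the initial layer to \emph{all} times: from $\dot V_i-\dot V_j=\lambda_N\bigl(-(V_i-V_j)+(\tilde V_i-\tilde V_j)\bigr)$, variation of constants gives
\begin{align}
 \Bigl|\int_0^t V_i-V_j\,\dd s\Bigr|\le \frac{1}{\lambda_N}|V_i^0-V_j^0|+\int_0^t|\tilde V_i-\tilde V_j|(\tau)\,\dd\tau,
\end{align}
where the first term is $\le\frac12|X_i^0-X_j^0|$ by \eqref{ass:bin2} and the second is $\lesssim\int_0^t d_{ij}\frac1N S_{\alpha+1}$, closing the Gronwall bound for $\dmin$ without ever invoking a pointwise bound on $V_i-\tilde V_i$. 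This is exactly how the paper handles it.
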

    We refer to \cite{CarilloChoi21} for a result on a related system with interaction kernels that are either Lipschitz continuous or given through repulsive Coulomb or Riesz potentials. We postpone the proof of Theorem \ref{th:binary} to Appendix \ref{sec:binary}.
}
The main strategy of the proof \rs{of Theorem~\ref{th:Hauray}} is identical to the original proof by Hauray in \cite{Hauray09}: We derive differential inequalities for both $\eta$ 
and $\dmin$ which eventually allow to control both quantities by a buckling argument. As mentioned in Section \ref{sec:strategy}, the key observation in the argument is that the singular sums 

\begin{align}\label{eq:S}
S_\beta\coloneqq \sup_i\sum_{j\neq i}\frac{1}{\abs{X_i-X_j}^\beta}
\end{align}
 can be controlled through the limiting density $\sigma$ as long as $\dmin$ is not too small compared to $\eta$ (as specified in \eqref{eq:initial_cond_Hauray}). 

We state this crucial estimate in the following Lemma. Its proof is an adaptation of \cite{Hauray09} and \cite[Lemma 3.1]{Hofer&Schubert} where the $\infty$-Wasserstein distance is treated. The generalization to any $p$-distance requires more care to estimate \enquote{bad sets} (where mass accumulates) since these sets cannot be localized anymore through a bound on the $p$-Wasserstein distance.

\begin{lem} \label{lem:sums.Wasserstein}
Let $\beta \in (0,d)$, $p \in [1,\infty]$, and $q > \frac d {d-\beta}$ with the dual exponent $q'$ such that $\frac 1q+\frac 1{q'}=1$ (and $q'=1$ if $q=\infty$). Furthermore let $X_i \in \R^d$, $1 \leq i \leq N$, and $\sigma_N = \frac 1 N \sum \delta_{X_i}$, as well as  $\sigma \in \mP(\R^d)\cap L^q(\R^d) $. Then, $S_\beta$ is estimated by
\begin{align}
 \label{eq:sums.Wasserstein.p}
	\frac 1NS_\beta\lesssim &\|\sigma\|_{L^q}^{\frac{\beta q'}d}  + 
 \frac{\|\sigma\|_{L^q}^{\frac{d-\beta}d \frac {p q'} {d+p q'}}}{N^{\beta/d} \dmin^{\beta}} (\W_p(\sigma_N,\sigma))^{\frac{(d-\beta)p}{d+pq'}} \\
	&+ \left(\frac{\|\sigma\|_{L^q}^{\frac{d-\beta}d }}{N^{\beta/d} \dmin^{\beta}}\right)^{\frac{(\beta+p)q'}{d - \beta + \beta q'+pq'}} (\W_p(\sigma_N,\sigma))^{\frac{(d-\beta)p}{d - \beta + \beta q'+pq'}}.
\end{align}

In the case $p = \infty$, this should be understood as
\begin{align} \label{eq:sums.Wasserstein}
	\frac 1NS_\beta
	\lesssim \|\sigma\|_{L^q}^{\frac{\beta q'}{d}} +  \frac{\|\sigma\|_{L^q}^{\frac{d-\beta}d}}{N^{\beta/d} \dmin^{\beta}}(\W_\infty(\sigma_N,\sigma))^{\frac{d-\beta}{q'}}.
\end{align}
\end{lem}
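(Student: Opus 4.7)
The plan is to adapt the classical Hauray decomposition to the $W_p$ setting by splitting, for each fixed $i$, the sum
$\sum_{j\neq i}|X_i-X_j|^{-\beta}$ into a \emph{near} part ($|X_j-X_i|\le r$) and a \emph{far} part ($|X_j-X_i|>r$), with $r>0$ a free parameter to be optimized at the very end. For the near part, the standard packing argument based on the lower bound $\dmin$ controls the number of particles in the $k$-th annulus $\{k\dmin<|y-X_i|\le(k+1)\dmin\}$ by $Ck^{d-1}$, yielding
\begin{equation*}
    \sum_{j\neq i,\, |X_j-X_i|\le r} |X_i-X_j|^{-\beta} \lesssim \sum_{k=1}^{r/\dmin}\frac{k^{d-1}}{(k\dmin)^\beta}\lesssim \frac{r^{d-\beta}}{\dmin^d},
\end{equation*}
valid since $\beta<d$.

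For the far part, introduce a (suitably smoothed) truncation $f_{r,i}(x)$ of $\chi_{|x-X_i|>r}|x-X_i|^{-\beta}$ and write $\frac1N\sum_{\text{far}} = \int f_{r,i}\,d\sigma_N$. Then use
\begin{equation*}
    \int f_{r,i}\,d\sigma_N \le \int f_{r,i}\,d\sigma + \Bigl|\int f_{r,i}\,d(\sigma_N-\sigma)\Bigr|.
\end{equation*}
The integral against $\sigma$ produces the first term on the right-hand side of \eqref{eq:sums.Wasserstein.p}: a rescaling/Hölder argument using $\beta q'<d$ (which is exactly the hypothesis $q>d/(d-\beta)$) yields $\int|y-X_i|^{-\beta}\,d\sigma\lesssim \|\sigma\|_{L^q}^{\beta q'/d}$. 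This is where the threshold condition on $q$ enters crucially.

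The main technical step is to estimate $\int f_{r,i}\,d(\sigma_N-\sigma)$ by $\eta\coloneqq\W_p(\sigma_N,\sigma)$. Fix an optimal transport plan $\pi$ and, for a second free parameter $h>0$, split $\pi=\pi_g+\pi_b$ into its restriction to $\{|x-y|\le h\}$ and $\{|x-y|>h\}$. On the good part, $f_{r,i}$ has Lipschitz constant $O(r^{-\beta-1})$ away from the sphere $\partial B_r(X_i)$; the jump across this sphere only affects $\pi_g$-mass lying in a $\sigma$-annulus of width $h$ around $\partial B_r(X_i)$, which is controlled by $\|\sigma\|_{L^q}$-Hölder. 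On the bad part, Markov gives $|\pi_b|\le \eta^p/h^p$, and each contribution is at most $\|f_{r,i}\|_\infty\lesssim r^{-\beta}$; the $\sigma$-marginal of that bad mass, in the region within distance $r$ of $X_i$ (where $|f_{r,i}|$ is largest), requires once again the $L^q$-Hölder bound. Collecting everything produces a schematic inequality of the form
\begin{equation*}
    \tfrac{1}{N}S_\beta \lesssim \|\sigma\|_{L^q}^{\beta q'/d}+\frac{r^{d-\beta}}{N\dmin^d}+\frac{\|\sigma\|_{L^q}^{1/q}\,h^{d/q'}}{r^\beta}\cdot(\text{Lip pieces})+\frac{\|\sigma\|_{L^q}\,\eta^p/h^p}{r^\beta}\cdot(\text{tail pieces}),
\end{equation*}
and the two different Wasserstein-dependent terms in \eqref{eq:sums.Wasserstein.p} then arise by balancing $r$ (against the near term $r^{d-\beta}/(N\dmin^d)$) once against the ``good'' term and once against the ``bad'' term, with an intermediate optimization in $h$.

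The main obstacle compared to the $W_\infty$ case treated in \cite{Hofer&Schubert, Hauray09} is exactly the handling of the \emph{bad} part of the transport plan: for $p=\infty$ every $(x,y)\in\supp\pi$ satisfies $|x-y|\le \W_\infty$, so the near/far decomposition of the sum localizes cleanly and only a single optimization in $r$ is needed; for $p<\infty$ mass may be transported arbitrarily far on a small set, so one must simultaneously control which portion of the ``bad'' mass lands in the singular region $B_r(X_i)$, and this absorption through the $L^q$ norm is what produces the second Wasserstein term with the more intricate exponents. The case $p=\infty$ in \eqref{eq:sums.Wasserstein} is recovered by sending $h\to\W_\infty$ and dropping the bad-mass contribution entirely.
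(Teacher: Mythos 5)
Your overall architecture (a near/far split at radius $r$, the fractional convolution bound $\int|y-X_i|^{-\beta}\,\dd\sigma\lesssim\|\sigma\|_{q}^{\beta q'/d}$ for the $\sigma$-integral, and a good/bad decomposition of the transport plan for the far part) is close in spirit to the paper's decomposition into the sets $E_1,E_2,E_3$, and your far-part pieces would indeed reproduce the first term and an $r^{-(\beta+p)}\eta^p$-type term. The gap is in the \emph{near} part. You bound $\sum_{|X_j-X_i|\le r}|X_i-X_j|^{-\beta}$ by the pure packing estimate $r^{d-\beta}/\dmin^{d}$, i.e.\ you count the particles in $B_r(X_i)$ by volume, $\#\{j:|X_j-X_i|\le r\}\lesssim (r/\dmin)^d$. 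This cannot yield \eqref{eq:sums.Wasserstein.p}: the Wasserstein terms there carry the factor $(N^{1/d}\dmin)^{-\beta}$ (possibly to a power $<1$), whereas your near term contributes $(N^{1/d}\dmin)^{-d}$, and after balancing $r^{d-\beta}/(N\dmin^d)$ against the bad-plan term $\eta^p r^{-(\beta+p)}$ one is left with $(N^{1/d}\dmin)^{-d(\beta+p)/(d+p)}\eta^{(d-\beta)p/(d+p)}$. Since $d(\beta+p)/(d+p)>\beta$ (because $\beta<d$), this is \emph{not} dominated by the right-hand side of \eqref{eq:sums.Wasserstein.p} in the regime $N^{1/d}\dmin\to 0$ --- which is precisely the regime the lemma is built for (cf.\ \eqref{ass:Wasserstein.00} and \eqref{dmin.threshold}, which allow $N^{1/3}\dmin\to 0$). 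Shrinking $r$ to tame the packing term blows up the bad-plan term, and replacing the bad-plan term by a Lipschitz estimate $\eta\, r^{-\beta-1}$ only produces yet another exponent with the same defect, so no choice of your free parameters $r,h$ closes the argument.

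The missing idea is to bound the \emph{number} of near particles not by packing but through the data: $\frac1N\#\{j:|X_j-X_i|\le r\}\lesssim r^{d/q'}\|\sigma\|_q+\|\sigma\|_q^{pq'/(pq'+d)}\eta^{dp/(pq'+d)}$, i.e.\ the mass $\sigma$ itself places near $X_i$ plus the mass that can be transported into $B_r(X_i)$ from outside $B_{2r}(X_i)$; the latter is controlled by a H\"older/moment interpolation because such points are displaced by at least a fixed fraction of their distance to $X_i$ (this is \eqref{est:|J|} in the paper, and it is the genuinely new step compared with the $\W_\infty$ case, where the count is immediate). One then converts this count into a bound on the singular sum via the smearing estimate $\frac1N\sum_{j\,\mathrm{near}}|X_i-X_j|^{-\beta}\lesssim \bigl(\tfrac1N\#\{j\,\mathrm{near}\}\bigr)^{(d-\beta)/d}\,(N^{\beta/d}\dmin^{\beta})^{-1}$ of \eqref{est:E_2.0}, which uses $\dmin$ only through the volume of the smeared balls and is what produces the correct power $\dmin^{-\beta}$ instead of your $\dmin^{-d}$. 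With that replacement your scheme essentially becomes the paper's proof.
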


Above and in the following we write $A\ls B$ to mean $A\le CB$ for some global constant $C<\infty$ that does not depend on the specific particle configuration or $N$. 
However,
we will sometimes allow $C$ to depend on the initial density for the macroscopic system $\rho^0$ or $\sigma^0$ when indicated in the statements. 

Both in the proof of Lemma~\ref{lem:sums.Wasserstein} and further below, we will make use of the following fact (cf. \cite{Villani21}). Assume $\sigma_i \in \mP(\R^d), i=1,2$ with $\W_p(\sigma_1,\sigma_2)<\infty$ and that $\sigma_1$ is continuous with respect to the Lebesgue measure. Then, there exists an optimal transport map $T:\R^d\to \R^d$ such that $\sigma_2=T\# \sigma_1$, i.e. $\sigma_1(U)=\sigma_2(T(U))$ for all measurable sets $U\subset \R^3$, and
 \begin{align}
		\W_p(\sigma_1,\sigma_2)= \bra{\int_{\R^d} \abs{T(x)-x}^p \sigma_1(t,x)\dd x}^{1/p}, \quad \text{ for } p<\infty.
\end{align} 
\begin{proof}
We only consider $p< \infty$ here as the case $p = \infty$, $q=\infty$ has been proved in \cite[Lemma~3.1]{Hofer&Schubert} and the case $p = \infty$, $q<\infty$ is an interpolation between the two proofs. We fix $1 \leq i \leq N$ and adopt the short notation $\eta \coloneqq \mathcal W_p(\sigma_N,\sigma) $.\\

\noindent\textbf{Step 1:} \emph{Splitting of the sum.} 
If $\W_p(\sigma_N,\sigma) < \infty$, let $T$ be an optimal transport map for $\sigma_N,\sigma$, i.e.,
a map $T:\R^3\to \R^3$ such that
$\sigma_N = T\# \sigma$ and 
\begin{align}
	\eta=\W_p(\sigma_N,\sigma) = \bra{\int_{\R^d} |T(x) - x|^p \dd \sigma(x)}^{1/p}.
\end{align}
If $\W_p(\sigma_N,\sigma) =\infty$ the assertion of the lemma is trivial.

We fix $1\leq i \leq N$, without loss of generality assume $X_i=0$ and estimate 
\begin{align}
    \frac 1 N \sum_{j \neq i} \frac 1 {|X_j|^\beta}.
\end{align}
 For $r > 0$ to be chosen later, we split the set 
\begin{align}
    E&\coloneqq \{x \in \R^3 : T x \neq 0 \} = E_1 \cup E_2 \cup E_3, \\
     E_1 &\coloneqq \biggl\{x \in \R^3 : T x \neq 0,  |Tx | \geq \frac 1 2 |x| \biggr\}, \\
     E_2 &\coloneqq \biggl\{x \in \R^3 : T x \neq 0, r \leq  |Tx | \leq \frac 1 2 |x| \biggr\},\\
    E_3 &\coloneqq \biggl\{x \in \R^3 : T x \neq 0, |Tx | \leq r \biggr\}.
\end{align}
Then,
\begin{align} \label{est:split.E}
    \frac 1 N \sum_{j \neq i} \frac 1 {|X_j|^\beta} &=  \int_{E} \frac {1} {|T x|^\beta} \dd \sigma(x) 
     \leq \int_{E_1\cup E_2\cup E_3} \frac {1} {|T x|^\beta} \dd \sigma(x) .
\end{align}
\noindent\textbf{Step 2:} \emph{Estimate on $E_1$ and $E_2$.}
For all $\psi \in L^1(\R^3) \cap L^q(\R^3)$ and $\beta<\tfrac{d}{q'}$ we have
\begin{align} \label{eq:fractional.convolution}
	 	\||\cdot|^{-\beta} \ast \psi \|_{\infty} \lesssim \|\psi\|_{q}^{\frac{\beta q'}d}\|\psi\|_{1}^{\frac{d-\beta q'}d},
	 \end{align}
where here and in the following, for $p\in [0,\infty]$, we denote $\|\cdot\|_{q}=\|\cdot\|_{L^q(\R^3)}$. Indeed, for $R >0$ and $\beta q' < d$, i.e. $q > \frac d {d-\beta}$, by H\"older's inequality
\begin{align}
    \abs{\int_{B_R(0)} |x|^{-\beta}  \psi(x) \dd x} \leq \left( \int_{B_R(0)} |x|^{-\beta q'} \right)^{\frac 1 {q'}} \|\psi\|_{q} \lesssim R^{\frac{d - \beta q'}{q'}}  \|\psi\|_{q}.
\end{align}
Moreover, 
\begin{align}
    \int_{\R^d \setminus B_R(0)} |x|^{-\beta} \psi(x) \dd x\leq R^{-\beta} \|\psi\|_{1}.
\end{align}
Choosing
\begin{align}
    R = \left(\frac{\|\psi\|_{1}}{\|\psi\|_{q}}\right)^{\frac {q'}d}
\end{align}
yields \eqref{eq:fractional.convolution}. Applying this, we deduce	 
\begin{align} \label{est:E_1}
    \int_{E_1} \frac {1} {|T x|^\beta} \dd \sigma(x) \lesssim \int_{\R^3} \frac {1} {|x|^\beta} \dd \sigma(x) \lesssim \|\sigma\|_{q}^{\frac{\beta q'}d}.
\end{align}
We observe that in $E_2$ we have 
\begin{align}
|Tx - x| \geq |x| - |T x| \geq \frac 12 |x|\ge r.
\end{align}
Thus,
\begin{align} \label{est:E_3}
    \int_{E_2} \frac {1} {|T x|^\beta} \dd \sigma(x) \lesssim  r^{-(\beta+p)} \int_{E_2} |x - Tx|^p \dd \sigma(x) \lesssim   r^{-(\beta+p)} \eta^p.
\end{align}

\noindent\textbf{Step 3:} \emph{Estimate on $E_3$.}
Let 
$\J\coloneqq\{j \neq i : |X_j| \leq r\}$ and
\begin{align}
	 \psi = \frac{2^d}{N\dmin^{d}\abs{B_1(0)}} \sum_{j \in \J} \1_{B_{\dmin/2}(X_j)}. 
\end{align} 
Then, since $\abs{X_j}\ge \frac 23\abs{y}$ for all $y\in B_{\dmin/2}(X_j)$, and thanks to \eqref{eq:fractional.convolution} for $q=\infty$ we obtain
	 \begin{align} \label{est:E_2.0}
  \begin{aligned}
	 \hspace{-1cm}\int_{E_3} \frac {1} {|T x|^\beta} \dd \sigma(x) &\leq \frac 1 N \sum_{j \in \J} \frac 1 {|X_j|^{\beta}} \ls \||\cdot|^{-\beta} \ast \psi \|_{\infty}
	 \lesssim \frac{\left(N^{-1} |\J|\right)^{\frac{d-\beta}d}}{N^{\beta/d} \dmin^{\beta}}.
  \end{aligned}
	 \end{align}
  We claim that
\begin{align} \label{est:|J|}
   \frac{|\J|}{N} \lesssim r^{\frac d {q'}} \|\sigma\|_{q} + \|\sigma\|_{q}^{\frac{pq'}{pq'+d}} \mathcal \eta^{\frac{dp}{pq'+d}}.
\end{align}
Inserting \eqref{est:|J|} into \eqref{est:E_2.0}  yields
\begin{align} \label{est:E_2}
	 \int_{E_3} \frac {1} {|T x|^\beta} \dd \sigma(x) \lesssim \frac{1}{N^{\beta/d} \dmin^{\beta}} \left(r^{\frac d{q'}} \|\sigma\|_{q} + \|\sigma\|^{\frac{pq'}{d+pq'}}_{q} \mathcal \eta^{\frac{dp}{d+pq'}}\right)^{\frac {d-\beta}{d}}.
\end{align}

The remainder of this step is devoted to the proof of \eqref{est:|J|}. We introduce the set 
\begin{align}
    A \coloneqq \left\{x \in (B_{2r}(0))^c : T x \in B_r(0)\right\}
\end{align} 
and observe
\begin{align} \label{est:|J|.0} 
  \sigma(A) \geq \sigma_N(B_r(0)) - \sigma(B_{2r}(0))\geq \frac{|\J|}{N} - |B_{2r}(0)|^{\frac 1{q'}} \|\sigma\|_{q}.
\end{align}
In particular, this implies \eqref{est:|J|} if
$\sigma(A) \leq |\J|/(2N)$. 
It therefore remains to show \eqref{est:|J|} in the case $\sigma(A) \geq |\J|/(2N)$. We introduce 
\begin{align}
    \tilde \sigma := \sigma \1_A.
\end{align}
Moreover, we fix $\alpha >0$ which satisfies
\begin{align}
    \frac{(\alpha p)' q'} \alpha < d
\end{align}
where $(\alpha p)'$ is the H\"older dual of $\alpha p$. Note that the left-hand side converges to $0$ as $\alpha \to \infty$ and thus such an $\alpha$ exists.
We then estimate
\begin{align}
  \|\tilde \sigma\|_1 \leq \int_{\R^3} \frac{|x|^{\frac 1 \alpha}}{|x|^{\frac 1 \alpha}}\dd \tilde \sigma 
    &\lesssim  \left(\int_{\R^3} \frac{1}{|x|^{\frac {(\alpha p )'} \alpha}}\dd \tilde \sigma \right)^{\frac 1 {(\alpha p)'}} 
    \left(\int_{\R^3} |x|^{p} \dd \tilde \sigma \right)^{\frac 1 {\alpha p}} \\
    & \lesssim \|\tilde \sigma\|_q^{\frac{q'}{\alpha d}} \|\tilde \sigma\|_1^{\frac {d - \frac {(\alpha p)' q'}{\alpha} }{d(\alpha p)'}} \left(\int_{\R^3} |x|^{p} \dd \tilde \sigma \right)^{\frac 1 {\alpha p}}
\end{align}
Thus, using that
\begin{align}
    \alpha \left( 1 - \frac {d - \frac {(\alpha p)' q'}{\alpha} }{d(\alpha p)' }\right) = \frac 1 p + \frac {q'}{d},
\end{align}
we obtain
\begin{align}
  \eta \geq  \left(\int_{\R^3} |x|^{p} \dd \tilde \sigma \right)^{\frac 1 { p}}
  \geq \|\tilde \sigma\|_q^{-\frac{q'}{d}} \|\tilde \sigma\|_1^{\frac 1 p + \frac {q'}{d}} \gtrsim  \| \sigma\|_q^{-\frac{q'}{d}} \left( \frac{|\J|}{N} \right)^{\frac 1 p + \frac {q'}{d}},
\end{align}
which implies \eqref{est:|J|}.

\noindent\textbf{Step 4:} \emph{Conclusion.}
Inserting the estimates \eqref{est:E_1}, \eqref{est:E_3} and \eqref{est:E_2} into \eqref{est:split.E} yields
\begin{align}
     \frac 1 N \sum_{i \neq j} &\frac 1 {|X_j|^\beta} \\
     & \lesssim \|\sigma\|_{q}^{\frac{\beta q'}d} +\frac{1}{N^{\beta/d} \dmin^{\beta}} \left(r^{\frac d {q'}} \|\sigma\|_{q} + \|\sigma\|_q^{\frac{pq'}{d+pq'}} \mathcal \eta^{\frac{dp}{d+pq'}}\right)^{\frac {d-\beta}{d}} +  r^{-(\beta+p)} \eta^p.
\end{align}
Choosing  
\begin{align}
    r = \left(\frac{N^{\frac\beta d} \dmin^\beta \eta^p}{\|\sigma\|_{q}^{\frac{d-\beta}d}} \right)^\frac{q'}{d - \beta + \beta q' +p q'}
\end{align}
concludes the proof.
\end{proof}

\medskip

\begin{proof}[Proof of Theorem \ref{th:Hauray}]
We start with the observation that, since $\sigma$ satisfies a conservation law, we have $\norm{\sigma(t)}_1= \norm{\sigma^0}_1$ for all $t>0$ and $\sigma(t)\in \mP(\R^d)$ for all $t>0$. Moreover, since $\dv K=0$, equation \eqref{eq:K_eq} can be rewritten as transport equation $\partial_t\sigma+(K\ast\sigma)\cdot\nabla \sigma = 0$. Therefore $\norm{\sigma(t)}_q\le \norm{\sigma^0}_q$ for all $t>0$. We will frequently use that $\norm{\sigma}_q+\norm{\sigma}_1\ls 1$.  \\

\noindent\textbf{Step 1:} \emph{Estimate for the transport distance.}
Let $T_0$ be an optimal transport plan, such that
\begin{align}
    \W_p(\sigma_N(0),\sigma^0)= \bra{\int_{\R^d} \abs{T_0(x)-x}^p \sigma^0(t,x)\dd x}^{1/p}.
\end{align}
Both $\sigma_N$ and $\sigma$ can be written as 
\begin{align}
	\sigma(t,x) &= \sigma^0(Y(0,t,x)),\\
    \sigma_N(t,x) &= \sigma_N(0)(Y_N(0,t,x)),
\end{align}
where the second equation has to be understood in the sense $\sigma_N(t)=Y_N(0,t)\#\sigma_N(0)$ with a $\sigma_N$-almost everywhere defined map $Y_N$. Here, $Y_N,Y$ are the flow maps and satisfy
\begin{align}
	\partial_t Y(t,s,x) &= u(t,Y(t,s,x)),& ~ Y(t,t,x) &= x,\\
    \partial_t Y_N(t,s,x) &= u_N(t,Y_N(t,s,x)), & ~ Y_N(t,t,x) &= x,
\end{align}
where $u=K\ast\sigma$ and $u_N(X_i)=\frac 1 N \sum_{j \neq i} K(X_i - X_j)$. For the microscopic system this is immediate from \eqref{eq:X_i}, for the macroscopic system this follows by the method of characteristics. The flow maps exist as long as the underlying systems are well-posed.
Consider the map $T_t=Y_N(t,0,\cdot)\circ T_0\circ Y(0,t,\cdot )$, where $Y_N$ and $Y$ are the flow maps corresponding to $\sigma_N$ and $\sigma$, respectively. The map 
$T_t$ is well-defined because $T_0$ maps $\sigma(0)$-a.e. point into the support of $\sigma_N(0)$. We define 
\begin{align}\label{eq:fdef}
	\bar \eta(t)\coloneqq \sup_{0\le s\le t} \bra{\int_{\R^d} \abs{T_s(x)-x}^p \sigma(s,x)\dd x}^{1/p}.
\end{align}
Note that $\eta(t)\le \bar \eta(t)$. By \cite{Hofer&Schubert}[Lemma 3.2], with the convention $K(0)=0$, it holds that
\begin{align}
\begin{aligned}
		&\bar \eta(t)-\bar \eta(0) \\
  &\leq   \int_0^t \bra{\int_{\R^d}\left| \int_{\R^d} (K(T_s(x) - T_s(y)) - K(x-y)) \sigma(s,y) \dd y \right|^p\sigma(s,x)\dd x}^{1/p}\!\!\!\dd s. \quad
  \end{aligned}\label{eq:f_est}
\end{align}	
Since $K$ satisfies \eqref{eq:C_alpha}, we have the estimate
\begin{align}
	|K(x) - K(y)| \leq C |x-y| \left( \frac 1 {|x|^{\alpha+1}} + \frac 1 {|y|^{\alpha+1}} \right). \label{eq:K.Lipschitz} 
\end{align}
In the following we drop the index $s$ for $T_s$ and we use \eqref{eq:K.Lipschitz} to estimate the integrand for every $s$ by 
\begin{align}
\begin{aligned}
    &\abs{K(T(x)-T(y))-K(x-y)}\ls\\
    &\hspace{1cm}\bra{\frac{1}{\abs{x-y}^{\alpha+1}}+\frac{\1_{\{T(x) \neq T(y)\}}}{\abs{T(x)-T(y)}^{\alpha+1}}}\bra{\abs{T(x)-x}+\abs{T(y)-y}},
    \end{aligned}\label{eq:integrand_est}
    \end{align}
where we use the convention $\frac 0 0 = 0$. 
We bound both terms of the sum separately. First, we estimate the term in \eqref{eq:integrand_est} containing $\abs{T(x)-x}$ through
\begin{align}
    \abs{\int_{\R^d}\abs{T(x)-x}\bra{\frac{1}{\abs{x-y}^{\alpha+1}}+\frac{\1_{\{T(x) \neq T(y)\}}}{\abs{T(x)-T(y)}^{\alpha+1}}}\sigma(y)\dd y} \\
    \hspace{2cm}\ls \bra{1+\frac 1N S_{\alpha+1}}\abs{T(x)-x},
\end{align}
where we recall the notation $S_\beta$ from \eqref{eq:S} and where we have used that \eqref{eq:fractional.convolution} implies the estimate
$\int_{\R^d}\abs{x-y}^{-\alpha-1}\sigma(y)\dd y \lesssim \norm{\sigma}_q+\norm{\sigma}_1\ls 1$. Consequently
\begin{align}\label{eq:Wp_x}
\begin{aligned}
    &\bra{\int_{\R^d}\abs{\int_{\R^d}\abs{T(x)-x}\bra{\frac{1}{\abs{x-y}^{\alpha+1}}+\frac{\1_{\{T(x) \neq T(y)\}}}{\abs{T(x)-T(y)}^{\alpha+1}}}\sigma(y)\dd y}^p\!\!\sigma(x)\dd x}^{1/p}\\
    &\qquad\ls \bra{1+\frac 1N S_{\alpha+1}}\bar \eta.
    \end{aligned}
\end{align}
Second, for the term in \eqref{eq:integrand_est} containing $\abs{T(y)-y}$, we use a convolution estimate with weight, i.e. we employ H\"older's inequality after splitting the term in the parenthesis into powers $1/p$ (which is paired with $\abs{T(y)-y}$) and $1/p'=1-1/p$. Hence,
\begin{align}\label{eq:int_hoelder}
\begin{aligned}
    &\abs{\int_{\R^d}\abs{T(y)-y}\bra{\frac{1}{\abs{x-y}^{\alpha+1}}+\frac{\1_{\{T(x) \neq T(y)\}}}{\abs{T(x)-T(y)}^{\alpha+1}}}\sigma(y)\dd y}\\
    &\ls \bra{1+\frac 1N S_{\alpha+1}}^{1/p'} \\
    & \quad  \times \bra{\int_{\R^d}\abs{T(y)-y}^p\abs{\frac{1}{\abs{x-y}^{\alpha+1}}+\frac{\1_{\{T(x) \neq T(y)\}}}{\abs{T(x)-T(y)}^{\alpha+1}}}\sigma(y)\dd y}^{1/p}, \quad 
    \end{aligned}
\end{align}
where we have reasoned for the term with power $1/p'$ as above. Using estimate \eqref{eq:int_hoelder} and then integrating first with respect to $x$ yields again
\begin{align} \label{W_p.S_alpha+1}
\begin{aligned}
    &\bra{\int_{\R^d}\abs{\int_{\R^d}\abs{T(y)-y}\bra{\frac{1}{\abs{x-y}^{\alpha+1}}+\frac{\1_{\{T(x) \neq T(y)\}}}{\abs{T(x)-T(y)}^{\alpha+1}}}\sigma(y)\dd y}^p\!\!\sigma(x)\dd x}^{1/p}\\
    &\qquad\ls \bra{1+\frac 1N S_{\alpha+1}}\bar \eta.
    \end{aligned}
\end{align}
Combining \eqref{eq:integrand_est}, \eqref{eq:Wp_x} and \eqref{W_p.S_alpha+1} with \eqref{eq:f_est}, we arrive at 
\begin{align}\label{eq:gronwall_f}
		\bar \eta(t)-\bar \eta(0)\ls   \int_0^t \bra{1+\frac 1N S_{\alpha+1}(s)}\bar \eta(s) \dd s.
\end{align}	

\noindent\textbf{Step 2:} \emph{Estimate for the minimal distance.}
For fixed $i \neq j$ we use \eqref{eq:K.Lipschitz} to estimate 
	\begin{align}
		\abs{\dot{X_i}-\dot{X_j}}&=\frac 1 N\abs{\sum_{k \neq i} K(X_i - X_k)  - \sum_{k \neq j} K(X_j - X_k))} \\	
		& \hspace{-1cm}\lesssim |X_i - X_j| \frac 1 N  \bra{\sum_{k \not \in \{i,j\}}  
		\left( \frac{1}{|X_i - X_k|^{\alpha+1}} +  \frac{1}{|X_j - X_k|^{\alpha+1}} 	\right)+\dmin^{-(\alpha+1)}}.
	\end{align}	 
	Thus, 
	\begin{align}\label{eq:gronwall_d}
		\frac \dd { \dd t} \dmin \gtrsim - \dmin \frac 1N S_{\alpha+1}.
	\end{align}		

\noindent\textbf{Step 3:} \emph{Buckling argument.} We observe that by Lemma~\ref{lem:sums.Wasserstein} in combination with condition \eqref{eq:initial_cond_Hauray} it holds for all $N$ sufficiently large that
\begin{align}
\frac 1N S_{\alpha+1}(0)\leq 2C_\ast\|\sigma^0\|_q^{\frac{(\alpha+1)q'}{d}},
\end{align}
where $C_\ast$ is the implicit constant from Lemma~\ref{lem:sums.Wasserstein}. To see that the last term in the right-hand side of \eqref{eq:sums.Wasserstein.p} vanishes as $N\to \infty$, it is enough to raise it to the power $\frac{d-\beta+\beta q'+pq'}{d+pq'}$, and to see that the resulting power $\frac{(\beta+p)q'}{d+pq'}$ of $N^{-\beta/d}\dmin^{-\beta}$ is smaller than $1$ by the condition $q>\frac{d}{d-\alpha-1}$ for $\beta=\alpha+1$.
We then set $T_\ast(N)$ the maximal time up to which
\begin{align}\label{eq:bootstrap_hauray}
\frac 1N \bra{S_{\alpha+1}}< 4 C_\ast\|\sigma^0\|_q^{\frac{(\alpha+1)q'}{d}}.
\end{align}
  By continuity we have $T_\ast(N)>0$. Based on \eqref{eq:gronwall_f} and \eqref{eq:gronwall_d}, a Gronwall argument yields 
\begin{align}
\bar \eta(t)\le e^{CC_\ast t}\bar \eta(0),\\
\dmin(t)\ge e^{-CC_\ast t}\dmin(0),
\end{align}
for all $t\le T_\ast(N)$. This in turn implies 
\begin{align}\label{eq:dmin_buckling}
\frac 1N\dmin^{-{\alpha+1}}\le \frac 1N\dmin(0)^{-{\alpha+1}}e^{CC_\ast t}
\end{align}
and
\begin{align}
\begin{aligned}
   &\eta^{\frac{p(d-\alpha-1)}{d+pq'}}\bra{1+\dmin^{-(\alpha+1)}N^{-(\alpha+1)/d}} \\
   & \hspace{2cm}\le  \eta(0)^{\frac{p(d-\alpha-1)}{d+pq'}}\bra{1+\dmin(0)^{-(\alpha+1)}N^{-(\alpha+1)/d}} e^{CC_\ast t}. 
    \end{aligned}\label{eq:eta_buckling}
\end{align}
Due to \eqref{eq:initial_cond_Hauray}, the factors in front of the exponential both in \eqref{eq:dmin_buckling} and in \eqref{eq:eta_buckling} can be made arbitrarily small by choosing $N$ large enough. Hence it follows together with Lemma~\ref{eq:sums.Wasserstein}, that $T_\ast(N)\to \infty$ as $N\to \infty$.
\end{proof}

\section{Proof of the main theorem} \label{sec:proof.main}
\subsection{Notation}\label{subsec:notation}

We will use the following notation throughout. We also recall some previously introduced notation to collect everything in one place.

\begin{itemize}
    \item We write $A\ls B$ to mean $A\le CB$ for some global constant $C<\infty$ that does not depend on the specific particle configuration or $N$. However we will sometimes allow $C$ to depend on the initial density for the macroscopic system $\rho^0$ if this is indicated in the statement.

    \item We write $\Phi$ for the Oseen tensor, the fundamental solution of the Stokes equation, i.e.
    \begin{align}  \label{Oseen}
        \Phi(x) = \frac 1 {8 \pi} \left( \frac 1 {|x|} + \frac{x \otimes x} {|x|^3} \right).
    \end{align}
   We will use repeatedly that $\abs{\Phi(x)}+\abs{x}\abs{\nabla \Phi(x)}+\abs{x^2}\abs{\nabla^2\Phi(x)}\le \abs{x}^{-1}$ and that $\dv \Phi=0$. Moreover we us the convention that $\Phi(0)=0$.
    
    \item We denote $\dot H^1(\R^3)=\{w\in L^6(\R^3):\nabla w\in L^2(\R^3)\}$. This is a Hilbert space when endowed with the $L^2$ scalar product of the gradients.  We denote its dual by $H^{-1}(\R^3)$.
    We sometimes slightly abuse notation by letting context make clear what the target space is, i.e. whether the functions are scalar or vector valued. We will explicitly write the target space when we deem it necessary for clarity.

    \item For a fluid velocity $v$ that satisfies the Stokes equation in some domain, we write
    \begin{align}
        \sigma[v] = 2 e v- p \Id
    \end{align}
    for the fluid stress tensor where $ev = \frac 1 2 \left(\nabla v + (\nabla v)^T\right)$ is the strain and $p$ is the pressure associated to $v$ that we view as a Lagrange multiplier for the constraint $\dv v = 0$. For this reason we will also abusively use the same symbol $p$ for different pressures that can be recovered from corresponding fluid velocity fields.
    If we construct $\sigma$ from $v,p$ where $v,p$ do not satisfy the homogeneous Stokes equation, we write $\sigma[v,p]$.

     \item For a generic particle configuration $Y \in  \bra{\R^3}^N$ we write 
    \begin{align}
        \dmin \coloneqq \min_{i \neq j} |Y_i - Y_j|, \\
        d_{ij} \coloneqq \begin{cases}
                |Y_i - Y_j|, & \quad  \text{for} ~ i \neq j, \\
                \dmin & \quad  \text{for} ~ i = j.
                \end{cases}
    \end{align}
    
    \item For $\beta \in \R$ and $\dmin>0$, we denote
    \begin{align} \label{def.S}
        S_\beta \coloneqq \sup_i \sum_{j\neq i} \frac{1}{d_{ij}^\beta}.
    \end{align}
    \rh{Note that for $N \geq 2$, 
    \begin{align}
         \sup_i \sum_{j = 1}^N \frac{1}{d_{ij}^\beta} \leq 2 S_\beta
    \end{align}}
    We will usually write $\sum_j$ in short for $\sum_{j=1}^N$. If no other particle configuration $Y \in \bra{\R^3}^N$ is specified, $\dmin$, $d_{ij}$ and $S_\beta$ refer to the particle configuration $X$ given through the dynamics \eqref{eq:acceleration} --\eqref{eq:fluid.micro}. In particular, the quantities $\dmin$, $d_{ij}$ and $S_k$
    are then time dependent.

    \item For given $N\in \N$ and for times $t \geq 0$, we write $F \in (\R^3)^N$ for the forces exerted by the particles on the fluid, i.e. 
    \begin{align} \label{def:F}
        F_i = -\int_{\partial B_i} \sigma[u_N] n \dd \mathcal H^2,
    \end{align}
   where $u_N$ is the solution to \eqref{eq:fluid.micro} corresponding to the specific particle configuration $X$. The sign is due to the orientation of the normal, which is the \emph{inner} normal of the fluid domain.

\item For a vector $W\in \bra{\R^3}^N$ we write
\begin{align} \label{euclidean}
    \abs{W}_2^2=\sum_{i=1}^N \abs{W_i}^2
\end{align}
and
\begin{align} \label{maximum}
   \abs{W}_\infty=\max_{i=1,\dots,N}\abs{W_i}.
\end{align}
Furthermore we denote by $\norm{\cdot}$ the operator norm for linear operators $(\R^3)^N\to (\R^3)^N$, where $(\R^3)^N$ is equipped with the Euclidean norm $|\cdot|_2$. 


\item We recall that $g \in S^2$ denotes a constant vector specifying the direction of gravity, and we denote 
\begin{align}
    \bar g = (g,\dots,g) \in (\R^3)^N.
\end{align}

\item We denote the Hausdorff measure restricted to $\partial B_i$ and normalized to weight one by $\delta_{\partial B_i} \coloneqq \frac {\mH^2|_{\partial B_i}}{\mH^2(\partial B_i)}$. At several points we will refer to the fact that for $G\in \R^3$ the solution \rs{$w\in \dot H^1(\R^3)$} to the problem
\begin{align}
        - \Delta w + \nabla p =  \delta_{\partial B_R(0)} G, ~ \dv  w &=0 \quad \text{in} ~ \R^3,\\
\end{align}
is given by
\begin{align}\label{eq:single_sol}
    w(x)=\begin{cases}\frac{G}{6\pi R},\quad &\text{for}~\abs{x}\le R,\\
    \bra{\Phi(x) + \frac {R^2} 6 \Delta \Phi(x)}G,\quad &\text{for}~\abs{x}>R.    
    \end{cases}
\end{align}

\item For a given configuration of particle positions, $Y\in (\R^3)^N$ with $\dmin>2R$ and given velocities $W\in (\R^3)^N$, let $w\in \dot H^1(\R^3)$ be the solution to 
\begin{align}
\left\{\begin{array}{rl}
        - \Delta w + \nabla p = 0, ~ \dv  w &=0 \quad \text{in} ~ \R^3 \setminus \bigcup_i B_i, \\
	    w &= W_i \quad \text{in} ~ B_i.
	\end{array}\right.\label{eq:fru}
\end{align}
where we denote, slightly abusing the notation, $B_i = \overline{B_R(Y_i)}$.
We denote the corresponding forces by
\begin{align}
	G_i\coloneqq -\int_{\partial B_i} \sigma[w] n \dd  \mathcal{H}^2.
\end{align}
 The so-called resistance matrix $\mR(Y)$ is the linear operator that maps the velocities $W=(W_i)_{1 \leq i \leq N}$ to the forces $G = (G_i)_{1 \leq i \leq N}$, i.e. 
\begin{align}
	(\mR(Y)W)_i=G_i.
\end{align}
The operator $\mR(Y)$ is symmetric and positive definite, which is a consequence of the fact that for solutions $w_1,w_2$ of \eqref{eq:fru} corresponding to $W_1,W_2\in (\R^3)^N$, respectively, we have by an application of the divergence theorem, 
\begin{align}\label{eq:R_bilin}
\begin{aligned}
	W_2\cdot\mR(Y)W_1&=-\sum_{i}\int_{\partial B_i}w_2\cdot \sigma[w_1]n\dd \cH^2 \\
 &=\int_{\R^3\setminus \cup_i B_i}\nabla w_2\cdot \sigma[w_1]\dd x \\
    &=2\int_{\R^3\setminus \cup_i B_i}\nabla w_2\cdot e w_1\dd x \\
    &=2\int_{\R^3}\nabla w_2\cdot e w_1\dd x=\int_{\R^3}\nabla w_2\cdot \nabla w_1 \dd x,
    \end{aligned}
\end{align}
where we use $\cdot$ both for the scalar product of vectors and of matrices $A\cdot B=\sum_{i,j=1}^3 A_{ij}B_{ij}$. Since $\mR$ is positive definite, it is in particular invertible.

We denote by $c_{\mR(X)}$ and $C_{\mR(X)}$ the smallest and largest eigenvalue of $\mR(X)$, respectively. Then we have
\begin{align}
	C_{\mR(Y)}&=\|\mR(Y)\| = \bra{\min_{|G|_2=1} G \cdot \mR^{-1}(Y) G}^{-1},  \label{eq:norm_C_R}\\
	c_{\mR(Y)}&=\min_{|W|_2=1} W \cdot \mR(Y) W = \|\mR^{-1}(Y)\|^{-1}.\label{eq:norm_c_R} 
\end{align}
\end{itemize}

\subsection{Modulated energy argument}

We recall the definition \eqref{eq:V_tilde} of the velocities
\begin{align}\label{eq:def_tildeV}
	\tilde V(t) \coloneqq \mR^{-1}(X(t)) \frac{\bar g}{N}.
\end{align}
The velocities $\tilde V$ are the velocities that inertialess particles would assume if they were in the position $X$ of the particles \emph{with} inertia. 

We remark first that \eqref{eq:norm_c_R} and \eqref{eq:def_tildeV} \ad{and $|g| = 1$} immediately imply 
    \begin{align}\label{eq:tildeV_l2}
        |\tilde V|_2\le N^{-1/2}c_{\mR(X)}^{-1}.
     \end{align} 

In the following we state and prove the central ingredient to the proof of Theorem~\ref{th:diagonal}, which gives control of the difference of $V$ and $\tilde V$ by means of a modulated energy argument.\\

In the following we denote 
\begin{align}
    c_{\mR,T}\coloneqq \inf_{0\le t\le T}c_{\mR(X(t))}.   
\end{align}

\begin{prop}\label{prop:energy}
    Let $T>0$ be given. Assume that 
    \begin{align}\label{eq:lambda_cond}
        \lambda_N \ge \frac{2}{N^2 c_{\mR,T}}\sup_{0\le t\le T}\|\nabla\mR^{-1}(X(t))\bar{g}\|.    
    \end{align}
    Then for all $t\in [0,T]$, the following estimate holds 
    \begin{align}\label{eq:E_control}
	    |V-\tilde{V}|_2(t) &\leq  |V-\tilde V|_2(0) e^{-\frac 12 \lambda_N N  c_{\mR,T} t} \\
     &\hspace{1cm}+ \frac{2 \sqrt 2 \bra{1 - e^{-\frac 12 \lambda_N N  c_{\mR,T} t}}}{\lambda_N N^{5/2}c_{\mR,T}^2}\sup_{0\le t\le T} \|\nabla \mR^{-1}(X)\bar g\|. \qquad \,
    \end{align}
\end{prop}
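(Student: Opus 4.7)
The plan is to reformulate Newton's law as a dissipative ODE in the variable $W := V - \tilde V$, exploiting the $l^2$-coercivity of the resistance matrix, and then close a Gronwall-type estimate after bounding $\dot{\tilde V}$ via the derivative of $\mathcal{R}^{-1}$.

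First I would reinterpret the dynamics \eqref{eq:acceleration}--\eqref{eq:fluid.micro} in terms of $\mathcal{R}$. Since $F_i = -\int_{\partial B_i} \sigma[u_N] n \dd \mathcal{H}^2$ and, by the very definition of the resistance matrix, $F = \mathcal{R}(X) V$, using $\gamma_N / R = N$ the equation for $V$ becomes
\[
\dot V = \lambda_N \bar g - \lambda_N N \mathcal{R}(X) V.
\]
The definition \eqref{eq:def_tildeV} gives $\lambda_N N \mathcal{R}(X) \tilde V = \lambda_N \bar g$, hence
\[
\dot V = \lambda_N N \mathcal{R}(X)(\tilde V - V),
\]
so $W := V - \tilde V$ satisfies $\dot W = -\lambda_N N \mathcal{R}(X) W - \dot{\tilde V}$.

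Next I would run a standard energy estimate. Taking the inner product with $W$ and using the coercivity \eqref{eq:norm_c_R},
\[
\tfrac{1}{2} \tfrac{\dd}{\dd t} |W|_2^2 \le - \lambda_N N\, c_{\mathcal{R},T} |W|_2^2 + |W|_2 |\dot{\tilde V}|_2,
\]
which, after dividing by $|W|_2$ (on the set where it is positive; otherwise one argues with a $\eps$-regularisation), gives
\[
\tfrac{\dd}{\dd t} |W|_2 \le -\lambda_N N\, c_{\mathcal{R},T} |W|_2 + |\dot{\tilde V}|_2.
\]
To estimate $|\dot{\tilde V}|_2$ I would use the chain rule with $\dot X = V$: since $\tilde V = \mathcal{R}^{-1}(X)\bar g / N$,
\[
\dot{\tilde V} = \tfrac{1}{N} (\nabla \mathcal{R}^{-1}(X) \bar g)\, V, \qquad |\dot{\tilde V}|_2 \le \tfrac{1}{N} \|\nabla \mathcal{R}^{-1}(X) \bar g\| \, |V|_2.
\]
Splitting $V = \tilde V + W$ and invoking \eqref{eq:tildeV_l2} yields $|V|_2 \le N^{-1/2} c_{\mathcal{R},T}^{-1} + |W|_2$, so altogether
\[
\tfrac{\dd}{\dd t} |W|_2 \le -\Bigl(\lambda_N N c_{\mathcal{R},T} - \tfrac{\|\nabla \mathcal{R}^{-1}(X) \bar g\|}{N}\Bigr) |W|_2 + \tfrac{\|\nabla \mathcal{R}^{-1}(X) \bar g\|}{N^{3/2} c_{\mathcal{R},T}}.
\]
The hypothesis \eqref{eq:lambda_cond} is exactly what is needed to absorb the middle term into half of the dissipative one, leaving
\[
\tfrac{\dd}{\dd t} |W|_2 \le - \tfrac{1}{2} \lambda_N N c_{\mathcal{R},T} |W|_2 + \tfrac{1}{N^{3/2} c_{\mathcal{R},T}} \sup_{[0,T]} \|\nabla \mathcal{R}^{-1}(X) \bar g\|.
\]
Integrating this scalar ODE by Gronwall (or Duhamel) produces the claimed bound \eqref{eq:E_control}, with the source term giving the $(1 - e^{-\cdot})$ factor over $\tfrac{1}{2}\lambda_N N c_{\mathcal{R},T}$; the precise constant $2\sqrt{2}$ in the statement I would expect to track back to either a slightly different splitting $|V|_2^2 \le 2 |\tilde V|_2^2 + 2 |W|_2^2$ inside the quadratic energy identity, or a mild re-absorption threshold, which is a bookkeeping matter.

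\textbf{Main obstacle.} The genuinely nontrivial step is the control of $\dot{\tilde V}$: one needs not only coercivity of $\mathcal{R}$ but also a quantitative bound on the derivative $\nabla \mathcal{R}^{-1}(X) \bar g$, which is why the whole argument is only available under the structural assumption \eqref{eq:lambda_cond}. Everything else is linear algebra and Gronwall. The proofs of the coercivity bound $c_{\mathcal{R},T}$ and of the operator-norm bound on $\nabla \mathcal{R}^{-1}(X) \bar g$ as long as the minimal distance $\dmin$ is controlled are presumably the content of the forthcoming Section \ref{sec:EstR}, and their combination with this modulated-energy estimate is what will drive the subsequent buckling argument for the minimal distance and the Wasserstein distance.
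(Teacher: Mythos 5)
Your proposal is correct and follows essentially the same route as the paper: the paper works with the modulated energy $E=\frac{1}{2N}|V-\tilde V|_2^2$, computes $\frac{\dd E}{\dd t}$ using $\dot V=\lambda_N N(\bar g/N-\mR(X)V)$, $\dot{\tilde V}=(V\cdot\nabla)\mR^{-1}(X)\bar g/N$, the coercivity $c_{\mR(X)}$, the splitting $V=(V-\tilde V)+\tilde V$ with \eqref{eq:tildeV_l2}, absorbs half the dissipation via \eqref{eq:lambda_cond}, and integrates the resulting inequality for $\sqrt E$ — exactly your steps, just phrased for $E$ rather than for $|W|_2$ directly. Your direct scalar ODE for $|W|_2$ in fact yields the constant $2$ in place of $2\sqrt2$; the extra $\sqrt2$ in the paper comes only from converting the bound on $\sqrt E$ back to $|V-\tilde V|_2=\sqrt{2N}\sqrt E$, confirming your guess that this is pure bookkeeping.
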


\begin{proof}
We start by introducing the modulated energy
\begin{align} \label{eq:mod.energy}
	E \coloneqq \frac{1}{2N}|V - \tilde V|_2^2.
\end{align}
By using \eqref{eq:acceleration} and the representation \eqref{eq:def_tildeV} we compute the time derivative of $E$ and estimate

\begin{align}
\begin{aligned}
	\frac {\dd E} {\dd t} &= \frac 1N(V - \tilde  V) \cdot \left(\lambda_N N \left( \frac {\bar g} N - \mR(X) V\right) - (V \cdot \nabla) \mR^{-1}(X) \frac {\bar g} N\right) \\
	&= -\lambda_N (V - \tilde  V) \cdot \mR(X) (V - \tilde V) \\
 &\hspace{1cm} -\frac{1}{N^2}(V - \tilde V) \cdot \bra{\bra{(V - \tilde  V) +  \tilde V} \cdot \nabla} \mR^{-1}(X) \bar g \\
	& \leq - \lambda_N c_{\mR(X)} |V - \tilde V|_2^2 + \frac{1}{N^2} \|\nabla \mR^{-1}(X)\bar{g}\||V - \tilde V|^2_2 \\
 & \hspace{1cm}+\frac{1}{N^2}\|\nabla \mR^{-1}(X)\bar g\||\tilde V|_2|V - \tilde V|_2. \\
	& \leq - 2 \lambda_N N c_{\mR(X)} E + \frac 2N  \|\nabla \mR^{-1}(X)\bar{g}\|\left(E + N^{-1/2}|\tilde V|_2 E^{1/2} \right).
 \end{aligned} \label{eq:dtE} 
\end{align}
We use \eqref{eq:tildeV_l2} and \eqref{eq:lambda_cond} to rewrite \eqref{eq:dtE} as 
\begin{align}
	\frac {\dd E} {\dd t} & \leq -  \lambda_N N c_{\mR(X)} E + \frac{2}{N^2c_{\mR(X)}}  \|\nabla \mR^{-1}(X)\bar{g}\|E^{1/2}.
\end{align}
This implies
\begin{align}
	\sqrt E(t) \leq  \sqrt E(0) e^{-\frac 1 2 \lambda_N N  c_{\mR,T} t} + 2 \frac{1 - e^{-\frac 12 \lambda_N N  c_{\mR,T} t}}{\lambda_NN^3 c_{\mR,T}^2}\sup_{0\le t\le T} \|\nabla \mR^{-1}(X)\bar g\|^2.
\end{align}
The statement follows directly.
\end{proof}

\subsection{Uniform estimates on the resistance matrix \texorpdfstring{$\mathcal R$}{mR} and the minimal distance \texorpdfstring{$\dmin$}{dmin}}

It is clear from the estimate \eqref{eq:E_control}, that it is crucial to control both $c_{\mR}$ and $\norm{\nabla \mR^{-1}(X)\bar g}$ in order to profit from Proposition~\ref{prop:energy}. This control is provided by the following two statements, which are proved in Section~\ref{sec:EstR}.
\begin{prop}\label{pro:resistance.estimates}
Let $Y\in (\R^3)^N$ satisfy $\dmin \ge 4 R$. Then, the following estimates hold.
	\begin{align}
		c_{\mR(Y)} &\gtrsim \frac{R}{1+R S_1}, \label{eq:c_R} \\
		C_{\mR(Y)} &\lesssim R. \label{eq:C_R}
	\end{align}
\end{prop}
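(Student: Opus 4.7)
The plan is to exploit the bilinear identity \eqref{eq:R_bilin}, which says $W\cdot \mR(Y) W = \int_{\R^3} |\nabla w|^2$ for the solution $w$ of \eqref{eq:fru}, and to characterize both bounds via this Dirichlet energy. The upper bound $C_{\mR(Y)} \lesssim R$ will follow from the variational principle (the Stokes solution minimizes Dirichlet energy among divergence-free fields attaining the boundary values) by constructing a cheap competitor $\tilde w$ with disjoint supports: for each $i$, start with $\eta_i(x) = \chi(|x-Y_i|/R)\, W_i$ where $\chi$ is a cutoff equal to $1$ on $B_1$ and vanishing outside $B_2$, then use the Bogovski\u\i\ operator on the annulus $B_{2R}(Y_i)\setminus B_R(Y_i)$ (the compatibility condition $\int \dv \eta_i=0$ is immediate) to obtain $\beta_i \in H^1_0$ with $\dv\beta_i = \dv\eta_i$ and, by scale invariance, $\|\nabla\beta_i\|_{L^2}^2 \lesssim R|W_i|^2$. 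Then $\psi_i = \eta_i - \beta_i$ is divergence-free, equals $W_i$ on $B_i$, is supported in $B_{2R}(Y_i)$, and $\int|\nabla\psi_i|^2 \lesssim R|W_i|^2$. Because $\dmin \geq 4R$ the supports $B_{2R}(Y_i)$ are pairwise disjoint, so $\tilde w = \sum_i \psi_i$ gives $\int|\nabla w|^2 \leq \int|\nabla \tilde w|^2 = \sum_i \int|\nabla\psi_i|^2 \lesssim R |W|_2^2$.

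For the lower bound $c_{\mR(Y)} \gtrsim R/(1+R S_1)$, the plan is to test $w$ against a superposition of single-particle Stokes solutions. For each $i$, let $v_i \in \dot H^1(\R^3)$ be the solution \eqref{eq:single_sol} with velocity $W_i$, which solves $-\Delta v_i + \nabla p_i = 6\pi R\, W_i\, \delta_{\partial B_i}$ in $\R^3$ with $\dv v_i = 0$. For any divergence-free $\phi \in \dot H^1$ the weak form gives
\begin{align}
\int_{\R^3} \nabla v_i\cdot \nabla \phi = 6\pi R\, W_i\cdot \fint_{\partial B_i} \phi\,\dd \mH^2 .
\end{align}
Choosing $\phi = w$ and using $w \equiv W_i$ on $B_i$ (hence on $\partial B_i$) yields $\int \nabla v_i\cdot \nabla w = 6\pi R|W_i|^2$. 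Summing over $i$ with $V \coloneqq \sum_i v_i$ gives $\int \nabla V\cdot \nabla w = 6\pi R |W|_2^2$, so by Cauchy-Schwarz
\begin{align}
(6\pi R)^2 |W|_2^4 \le \Bigl(\int |\nabla w|^2 \Bigr)\Bigl(\int |\nabla V|^2\Bigr).
\end{align}
It remains to show $\int|\nabla V|^2 \lesssim R(1 + R S_1)|W|_2^2$. Testing the same identity with $\phi = v_j$ gives $\int \nabla v_i\cdot \nabla v_j = 6\pi R\, W_i\cdot \fint_{\partial B_i} v_j$. For $i=j$ this reproduces $6\pi R|W_i|^2$, summing to $R|W|_2^2$. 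For $i\neq j$, the explicit formula \eqref{eq:single_sol} and $d_{ij} \geq \dmin \geq 4R$ give the pointwise bound $|v_j(x)| \lesssim R|W_j|/|x-Y_j|$ on $\partial B_i$, hence $|\fint_{\partial B_i} v_j| \lesssim R|W_j|/d_{ij}$, yielding
\begin{align}
\int|\nabla V|^2 \lesssim R|W|_2^2 + R^2 \sum_{i\neq j}\frac{|W_i||W_j|}{d_{ij}} \lesssim R|W|_2^2 + R^2 S_1 |W|_2^2,
\end{align}
where the last step uses $2|W_i||W_j| \leq |W_i|^2 + |W_j|^2$ and symmetry of $d_{ij}$. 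Plugging this into the Cauchy-Schwarz bound gives $\int|\nabla w|^2 \gtrsim R|W|_2^2/(1+RS_1)$, which is the desired coercivity.

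The main obstacle I expect is not conceptual but careful bookkeeping of two points: first, verifying that the manipulation $\int \nabla v_i\cdot \nabla \phi = \langle 6\pi R W_i \delta_{\partial B_i}, \phi\rangle$ is rigorous in $\dot H^1(\R^3)$ (decay of $v_i,\phi$ at infinity kills boundary terms, and the distributional formulation in \eqref{eq:single_sol} is valid); and second, handling the $R^2\Delta\Phi$ correction in the single-particle formula \eqref{eq:single_sol}, which decays like $R^2/|x|^3$ and so contributes at most $R^2\cdot R^{-2}\cdot d_{ij}^{-1}\cdot|W_j|$ uniformly thanks to $d_{ij} \geq 4R$. Both issues are standard once one commits to the divergence-free weak formulation, so the heart of the argument is truly the algebraic identity $\int \nabla v_i\cdot\nabla w = 6\pi R|W_i|^2$ combined with the explicit $\Phi$-decay that makes the off-diagonal sum reduce to $R^2 S_1|W|_2^2$.
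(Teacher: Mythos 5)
Your proof is correct, and both halves rest on the same two pillars as the paper's: the variational principle for the upper bound with a competitor supported in disjoint balls $B_{2R}(Y_i)$ (your explicit cutoff-plus-Bogovski\u\i\ construction is exactly what the cited \cite[Lemma 4.2]{HoeferJansen20} provides), and, for the lower bound, the Dirichlet energy of a superposition of single-particle Stokes solutions, estimated as a diagonal term of order $R$ per particle plus off-diagonal cross terms controlled by $R^2 S_1$ via Young's inequality. The only genuine difference is how the lower bound is organized: the paper works with the dual characterization $c_{\mR}^{-1}=\sup_{|G|=1}G\cdot\mR^{-1}G=\sup_{|G|=1}\|v\|^2_{\dot H^1}$ for the force problem and bounds $\|v\|_{\dot H^1}\le\|w\|_{\dot H^1}$ by an orthogonality argument, whereas you stay with the primal quadratic form and extract the coercivity from $6\pi R|W|_2^2=\int\nabla V\cdot\nabla w\le\|\nabla V\|_2\|\nabla w\|_2$. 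The two routes are dual to one another and require the same off-diagonal computation; yours avoids introducing $\mR^{-1}$ explicitly, while the paper's dual formulation is the one it reuses elsewhere (e.g.\ in \eqref{eq:tildeV_l2}).
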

\begin{prop}\label{pro:gradient.resistance}
 There exists $\delta > 0$ with the following property. Assume that $Y\in (\R^3)^N$ satisfies $\dmin \ge 4 R$ and
\begin{align}  \label{delta}
    R^3 S_3 \leq \delta.
\end{align}
Then
\begin{align}
    \|\nabla \mR^{-1}(Y) \bar g \| \lesssim S_2\bra{1+R^2S_2} .
\end{align}
\end{prop}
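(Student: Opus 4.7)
The strategy is to represent $U := \mR^{-1}(Y) \bar g$ via the method of reflections as a convergent series whose terms can be differentiated individually. Under $\dmin \geq 4R$ and the smallness assumption $R^3 S_3 \leq \delta$ with $\delta$ small, an adaptation of the reflection scheme of \cite{HoeferLeocataMecherbet22} yields
\[
    U_i = \frac{g}{6\pi R} + U^{(1)}_i + \sum_{n \geq 2} U^{(n)}_i, \quad U^{(1)}_i := \sum_{j \neq i} \bra{\Phi(Y_i-Y_j) + \frac{R^2}{6}\Delta \Phi(Y_i - Y_j)} g,
\]
where $U^{(n)}$ collects the $n$-fold reflected Stokeslets (with Faxén corrections). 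The zero-th order term $g/(6\pi R)$ is independent of $Y$ and therefore contributes nothing to $\nabla U$; all the information is carried by $U^{(1)}$ and the tail.

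For the dominant term $U^{(1)}$, differentiating in a direction $W \in (\R^3)^N$ and using that $\Phi(Y_i-Y_j)$ depends only on the difference gives
\[
    (D_W U^{(1)})_i = \sum_{j \neq i} \bra{\nabla\Phi(Y_i-Y_j)+\frac{R^2}{6}\nabla\Delta\Phi(Y_i-Y_j)} g\,(W_i-W_j).
\]
Using the Oseen bounds $|\nabla\Phi(x)| \ls |x|^{-2}$ and $|\nabla\Delta\Phi(x)| \ls |x|^{-4}$, Cauchy--Schwarz applied to the inner sum, and the symmetry $d_{ij}=d_{ji}$, I expect to obtain
\[
    \sum_i \abs{(D_W U^{(1)})_i}^2 \ls S_2^2 |W|_2^2 + R^4 S_4^2 |W|_2^2.
\]
Combining this with the elementary interpolation $S_4 \leq S_2 \cdot \dmin^{-2} \leq S_2^2$ (since $\dmin^{-2} \leq S_2$) produces $|D_W U^{(1)}|_2 \ls S_2(1 + R^2 S_2)|W|_2$, which is precisely the stated bound.

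It remains to absorb the higher reflections $U^{(n)}$, $n \geq 2$, into this estimate. Each additional reflection inserts an extra factor of the reflection operator whose norm is controlled by (a small power of) $R^3 S_3 \leq \delta$, so the tail is geometric. Differentiating the products of Oseen tensors term by term and applying the same Cauchy--Schwarz argument yields a total contribution absorbed in $R^2 S_2^2$. The technical core, and the main obstacle, is to rigorously justify term-by-term differentiation of the reflection series and to prove uniform estimates on each reflection operator under the condition $R^3 S_3 \leq \delta$; this is precisely the role played by the smallness parameter $\delta$ and is handled along the same lines as the gradient-of-resistance-matrix estimates established in \cite{HoeferLeocataMecherbet22}.
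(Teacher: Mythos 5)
Your proposal follows essentially the same route as the paper: expand $\mR^{-1}(Y)\bar g$ by the method of reflections, differentiate the explicit first-order (Oseen/Fax\'en) term and bound the resulting matrix with entries $\lesssim d_{ij}^{-2}+R^2d_{ij}^{-4}$ by a Schur/Cauchy--Schwarz test to get $S_2(1+R^2S_2)$, and control the differentiated higher reflections as a geometric series in $R^3S_3\le\delta$. The technical core you defer to \cite{HoeferLeocataMecherbet22} — rewriting the reflection series so that no ill-defined $Q_i\nabla_{Y_i}Q_i$ terms appear, the decay estimate for $\nabla_{Y_i}Q_i$, and the contraction estimates for chains of inverse-distance sums — is exactly what the paper's Lemmas~\ref{lem:decay.nabla.Q} and~\ref{lem:est.nabla.psi} supply, so the plan is correct and matches the paper's proof.
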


In both Proposition~\ref{pro:resistance.estimates} and Proposition~\ref{pro:gradient.resistance} the minimal distance of the particles plays a crucial role. We are able to retain control of the minimal distance with the help of the following two statements, which are proved in Section~\ref{sec:MinDist}. This control is also crucial in order to adapt the argument in the proof of Theorem \ref{th:Hauray} to the setting of particles with small inertia.

\begin{lem}\label{lem:dmin_control}
Let $t\ge 0$ and let $X$ be a solution to \eqref{eq:acceleration} and \eqref{eq:fluid.micro} until time $t$. Assume $\inf_{0\le s\le t} \dmin(s)\ge 8R$ as well as \eqref{ass:V.Lipschitz}. Then there exists a universal constant $C<\infty$ such that the following bound is satisfied.
\begin{align} \label{est:dmin.0}
    d_{\min}(t)\ge \frac 12d_{\min}(0)-C\int_0^t \dmin \bra{\abs{F}_\infty S_2 +R^3 S_2 S_6^{1/2}\abs{F}_2} \dd s.
\end{align}
\end{lem}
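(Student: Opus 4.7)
The plan is to reduce the claim to a Lipschitz-type velocity estimate for the minimal pair, which can then be integrated via Gronwall's inequality. At any differentiable point, $\dot{\dmin}(t) \geq -|V_{i^\ast(t)} - V_{j^\ast(t)}|$, where $(i^\ast, j^\ast)$ is a pair realizing the minimum. The task is therefore to establish a bound of the form
\begin{align*}
    |V_i - V_j| \leq C |X_i - X_j| \bigl(|F|_\infty S_2 + R^3 S_2 S_6^{1/2} |F|_2\bigr)
\end{align*}
for the minimal pair; integrating $\dot{\dmin} \geq -C \dmin (|F|_\infty S_2 + R^3 S_2 S_6^{1/2} |F|_2)$ then yields the claim, with the factor $\tfrac 12$ in front of $\dmin(0)$ absorbing contributions from a short initial layer.

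The central tool will be the force representation of Lemma~\ref{le:force.representation}, which makes the Stokes law $F_i = 6\pi R(V_i - (u)_i)$ rigorous, where $(u)_i$ is an appropriate local average of the fluid velocity around $B_i$ generated by the other particles. Using the explicit single-particle solution~\eqref{eq:single_sol}, $(u)_i$ admits an expansion $(u)_i \approx \sum_{k \neq i} \bigl(\Phi(X_i - X_k) + \tfrac{R^2}{6} \Delta\Phi(X_i - X_k)\bigr) F_k$ plus higher-order multipole corrections. Forming the difference $V_i - V_j$ for the minimal pair, the sum over $k \notin \{i,j\}$ is estimated via the mean value theorem applied to $\Phi$, using $|\nabla \Phi(x)| \ls |x|^{-2}$, together with the observation that, by minimality and the triangle inequality, $|X_{i^\ast} - X_k|$ and $|X_{j^\ast} - X_k|$ are comparable up to a factor of two. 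This produces the main term $|X_i - X_j| \cdot |F|_\infty S_2$. The $\tfrac{R^2}{6}\Delta\Phi$ contribution and the higher multipole corrections, after a further derivative, behave like $R^2/|x|^4$ (and $R^{2k}/|x|^{2k+2}$) and yield the $R^3 |X_i - X_j| S_2 S_6^{1/2} |F|_2$ term via Cauchy--Schwarz, the square-root arising from bounding the $\ell^2$-norm of sums of $1/d_{ij}^3$ by $S_6^{1/2}$.

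The main obstacle will be handling the contributions coming from $k \in \{i,j\}$, namely $\Phi(X_i - X_j)(F_j - F_i)$ together with the self-interaction jump $(F_i - F_j)/(6\pi R)$: individually these scale like $|F|_\infty/\dmin$ (far too large for the claim), so one must exploit the precise form of the representation in Lemma~\ref{le:force.representation} to show that the singular pieces cancel and leave only a residue of the required Lipschitz form. Assumption~\eqref{ass:V.Lipschitz} enters to control velocity differences during the initial layer of length $\sim 1/\lambda_N$, in which the inertial system has not yet equilibrated to its inertialess counterpart and the Stokes-law cancellation is not yet fully effective; this controlled initial-layer loss is the source of the factor $\tfrac 12$ in front of $\dmin(0)$. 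Once the Lipschitz-type velocity bound is established, the conclusion follows by direct time integration.
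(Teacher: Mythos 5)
Your skeleton is the right one --- the force representation \eqref{force.repr}, assumption \eqref{ass:V.Lipschitz} to absorb the initial velocity mismatch into the factor $\tfrac12$, and time integration --- and this matches the paper's proof in outline. One remark on the reduction: the paper does not prove a pointwise Lipschitz bound on $|V_i-V_j|$ (which cannot hold instantaneously for a second-order system, since $V_i-V_j$ at time $s$ depends on the history and not on the current positions); instead it writes $d_{ij}(t)\ge d_{ij}(0)-|\int_0^t (V_i-V_j)\dd s|$, integrates the ODE $\dot V_i=\lambda_N\bigl(g+6\pi\gamma_N((u)_i-V_i)\bigr)$ by Duhamel's formula \eqref{eq:V_integrated}, and bounds the time integral directly: the initial data contribute $\frac{1}{6\pi\gamma_N\lambda_N}|V_i^0-V_j^0|\le\frac12 d_{ij}(0)$ by \eqref{ass:V.Lipschitz}, and the forcing contributes $\int_0^t|(u)_i-(u)_j|\dd\tau$. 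Your formulation could be repaired along these lines, so this is not the main issue.

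The genuine gap is the proof of the key estimate $|(u)_i-(u)_j|\ls d_{ij}\bigl(|F|_\infty S_2+R^3S_2S_6^{1/2}|F|_2\bigr)$, which in the paper is a separate statement (Lemma~\ref{lem:(u)_i}(ii), estimate \eqref{est:omega.u.diff}) proved by a duality argument, not by the multipole expansion you sketch. Expanding $(u)_i\approx\sum_k\Phi(X_i-X_k)F_k$ ``plus higher-order multipole corrections'' is not rigorous here: the corrections are not an explicit series with known decay, and the factor $S_2$ in the error term $R^3S_2S_6^{1/2}|F|_2$ comes from the variational estimate $\|u-w_0\|_{\dot H^1}\ls R^{3/2}S_2|F|_2$ of Lemma~\ref{l:tilde_u}, combined with testing against an auxiliary Stokes solution whose gradient over the balls is controlled by $R^{3/2}d_{ij}S_6^{1/2}$ via Lemma~\ref{lem:extension}; nothing in your sketch produces this. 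Moreover, the near-field terms $k\in\{i,j\}$, which you correctly flag as the obstruction to your expansion (they give $\Phi(X_i-X_j)(F_j-F_i)$, of size $|F|_\infty/d_{ij}$ with no visible factor $d_{ij}$, so bounding them requires $|F_i-F_j|\ls d_{ij}^2$, which is circular), are left unresolved: you assert that ``the singular pieces cancel'' without giving a mechanism. In the paper there is no such cancellation to find; the problem simply does not arise because $(u)_i$ is an average over the annulus $A_i$ at distance $\gtrsim\dmin/4$ from every particle, so in the duality argument the terms $k=i,j$ are controlled exactly like all the others (with the convention $d_{ii}=\dmin$), yielding a contribution $\ls d_{ij}|F|_\infty\dmin^{-2}\le d_{ij}|F|_\infty S_2$. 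As written, the hardest part of the lemma is acknowledged rather than proved.
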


\begin{lem} \label{lem:VF.infty}
Let $t\ge 0$ and let $X$ be a solution to \eqref{eq:acceleration} and \eqref{eq:fluid.micro} until time $t$. Assume $\inf_{0\le s\le t} \dmin(s)\ge 8R$. Let $|F|_{2,t} = \sup_{0\le s \le t} |F|_2$ and $S_{\beta,t}=\sup_{0\le s \le t}S_\beta(s)$. Then the following bound holds true. 
\begin{align}
    \label{eq:F_infty}
    \hspace{0.3cm} \abs{F_i(t)} + R \abs{V_i(t)} \ls R\left(\frac 1 {\gamma_N}  + |V_i^0| e^{-\gamma_N \lambda_N t} + |F|_{2,t} \left( S_{2,t}^{\frac 1 2}  +R^{3}S_{2,t} S_{4,t}^{\frac 1 2}\right)\right) . \qquad 
\end{align}
\end{lem}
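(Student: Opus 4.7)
The plan is to reduce Newton's equation to a scalar linear ODE with a large Stokes friction rate via the representation provided by Lemma~\ref{le:force.representation}, and then to propagate the resulting bound on $|V_i|$ back to a bound on $|F_i|$ through the same representation. By Lemma~\ref{le:force.representation}, each drag force can be written as $F_i = 6\pi R(V_i - (u)_i) + \mathcal{E}_i$, where $(u)_i \in \R^3$ is a local average of the mean-field velocity generated at particle $i$ by the other particles, and $\mathcal{E}_i$ is a lower-order correction. Inserted into \eqref{eq:acceleration} and using $\gamma_N = NR$, this yields the scalar friction equation
\begin{align*}
    \dot V_i + 6\pi \lambda_N \gamma_N V_i \;=\; \lambda_N g + 6\pi \lambda_N \gamma_N (u)_i - \lambda_N N \mathcal{E}_i,
\end{align*}
whose dissipation rate is the Stokes rate $6\pi \lambda_N \gamma_N$.

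The quantity $(u)_i$ is, to leading order, $\sum_{j \neq i} \Phi(X_i - X_j)F_j$. Since $|\Phi(x)| \lesssim |x|^{-1}$, Cauchy--Schwarz yields for $s \leq t$
\begin{align*}
    |(u)_i(s)| \lesssim \left(\sum_{j\neq i} d_{ij}^{-2}(s)\right)^{\!1/2}\!|F(s)|_2 \lesssim S_{2,t}^{1/2}|F|_{2,t};
\end{align*}
the Fax\'en-type correction involving $R^2 \Delta\Phi/6$ visible in the single-particle solution \eqref{eq:single_sol}, together with the sub-leading reflections encoded in $\mathcal E_i$, contribute the additional $R^3 S_{2,t} S_{4,t}^{1/2}|F|_{2,t}$ term appearing in \eqref{eq:F_infty}. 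Taking the modulus of the scalar friction equation and integrating by Duhamel, the initial value decays at rate $6\pi\lambda_N\gamma_N$, the constant forcing $\lambda_N g$ produces the equilibrium contribution of order $1/\gamma_N$, and the remaining source terms are controlled by $|(u)_i|_{\infty,t}$ and $|\mathcal E_i|_{\infty,t}/R$ (using $\int_0^t 6\pi\lambda_N\gamma_N e^{-6\pi\lambda_N\gamma_N(t-s)}\,\dd s \leq 1$). Multiplying by $R$ and combining with the pointwise bound $|F_i| \leq 6\pi R|V_i| + 6\pi R|(u)_i| + |\mathcal E_i|$ then yields \eqref{eq:F_infty}.

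The main technical obstacle lies in the precise form of $(u)_i$ and $\mathcal E_i$ in Lemma~\ref{le:force.representation}: obtaining exactly the $R^3 S_{2,t} S_{4,t}^{1/2}|F|_{2,t}$ correction, rather than a cruder bound of the type $R^2 S_{6,t}^{1/2}|F|_{2,t}$, requires a careful self-consistent expansion of the mean-field velocity in powers of $R$ that exploits both the $\dv\Phi = 0$ structure and the Fax\'en representation \eqref{eq:single_sol}; in particular, the $S_{4,t}^{1/2}$ factor arises from Cauchy--Schwarz applied to a sum of the form $\sum_j R^2 d_{ij}^{-2}(\cdot)$ combined with one application of the leading-order $S_2^{1/2}|F|_2$ bound. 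Once this refined Stokes law is in hand, the exponential decay of the initial velocity and the $1/\gamma_N$ equilibrium value follow directly from the Duhamel formula for the scalar friction ODE.
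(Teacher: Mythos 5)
Your proposal follows essentially the same route as the paper: rewrite Newton's law via the Stokes-law representation of Lemma~\ref{le:force.representation} as a linear ODE for $V_i$ with damping rate $6\pi\lambda_N\gamma_N$, apply Duhamel to get $|V_i(t)|\ls |V_i^0|e^{-\gamma_N\lambda_N t}+\gamma_N^{-1}+\sup_{s\le t}|(u)_i|$, and feed this back into the representation to bound $|F_i|\ls R(|V_i|+|(u)_i|)$. Two remarks. First, the representation \eqref{force.repr} is exact, so the correction term $\mathcal E_i$ you carry along is identically zero; the entire deviation from the single-particle Stokes law is already contained in the weighted average $(u)_i=\fint_{A_i}\omega_i u\dd x$. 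Second, the bound on $(u)_i$ that you flag as the main technical obstacle is precisely Lemma~\ref{lem:(u)_i}(i), proved earlier in the same section, so it can simply be cited; its proof is not a self-consistent expansion in powers of $R$ but a duality argument, testing the Stokes equation with the solution of $-\Delta\varphi+\nabla p=\omega_i^T W\,|A_i|^{-1}\1_{A_i}$ and comparing $u$ with the monopole approximation $w_0$ via the energy estimate of Lemma~\ref{l:tilde_u}; this is where the refined factor $R^3S_2S_4^{1/2}|F|_2$ comes from. With that lemma invoked, your argument is complete and coincides with the paper's.
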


\subsection{Proof of Theorem~\ref{th:diagonal}}

We perform a buckling argument similar as in the proof of Theorem \ref{th:Hauray} in order to show \eqref{W_2.thm}--\eqref{dmin.thm}. Estimate \eqref{u.thm} will then be shown in the last step of the proof.
We denote $\eta(t) = \mathcal W_2(\rho_\ast(t),\rho_N(t))$.

\medskip 

\noindent \textbf{Step 1:} \emph{Setup of the buckling argument.}
By Lemma \ref{lem:sums.Wasserstein}, using that $\|\rho_\ast(t)\|_{q} \leq \|\rho^0\|_{q} \leq C$, we have for all $t > 0$ for which the particle dynamics is well-defined and as long as $\eta \leq 1$, that
\begin{align} \label{S_1+S_2}
     \frac{S_1 + S_2}N \leq \frac {C} 2 \left(1 + \frac{1}{N^{2/3} \dmin^{2}} \eta^{\frac 2 {3+2q'}} \right)
\end{align}
for some $C \geq 1$ that only depends on $\|\rho^0\|_{q}$.

Then, for any $N>0$, we consider $T_\ast = T_\ast(N) > 0$ the largest time for which it holds that 
\begin{align}
     R^2\dmin^{-3} &\leq 1 \qquad \text{for all } t\leq T_\ast, \label{bootstrap.dmin} \\
    \frac{S_1 + S_2}N + \frac{S_3}{N^2} &\leq C_1 \qquad  \text{for all } t\leq T_\ast,\label{bootstrap.S}
\end{align}
\ad{for some $C_1 \ls 1$ that will be chosen later}. We observe that \eqref{bootstrap.dmin} and \eqref{bootstrap.S} imply in particular, using \eqref{ass:gamma}, that for any $\delta>0$ we can choose $N$ sufficiently large such that
\begin{align}
        R^3 S_3 &< \delta \qquad \text{for all } t\leq T_\ast \label{eq:bootstrap.S_3}, \\
        \dmin &\geq 8 R\qquad \text{for all } t\leq T_\ast. \label{eq:bootstrap.dmin.R}
\end{align} 

We check that, for all $N$ sufficiently large, the inequalities in the buckling assumptions \eqref{bootstrap.dmin}--\eqref{bootstrap.S} are satisfied at $t=0$ which implies $T_\ast >0 $ by a continuity argument: First, \eqref{bootstrap.dmin} holds for large $N$ at $t=0$ due to \eqref{ass:gamma} and \eqref{dmin.threshold}.
Moreover, by \eqref{eq:sum.3}, we have
\begin{align} \label{S_3.dmin}
    \frac{S_3}{N^2} \leq C \frac{\log N}{N^2 \dmin^3}.
\end{align}
Using again \eqref{dmin.threshold}, the right-hand side above tends to $0$ as $N \to \infty$ at $t=0$. Combining this with \eqref{S_1+S_2}, again  \eqref{dmin.threshold}, and \eqref{ass:Wasserstein.00} yields that \eqref{bootstrap.S} also holds at $t=0$ for all $N$ sufficiently large.

\medskip

\noindent \textbf{Step 2:} \emph{Estimate of $\eta$.}
Analogous to the proof of Theorem \ref{th:Hauray}, we introduce 
\begin{align}\label{eq:fdef1}
	\bar \eta(t)\coloneqq \sup_{0\le s\le t} \bra{\int_{\R^3} \abs{T_s(x)-x}^2 \rho_\ast(s,x)\dd x}^{1/2},
\end{align}
where $T_s = Y_N(t,0,\cdot) \circ T_0 \circ Y(0,t,\cdot)$, $T_0$  with $\rho_N^0=T_0\#\rho^0$ is an optimal transport map, and $Y_N$ and $Y$ are flow maps for $\rho_N$ and $\rho_\ast$ respectively. As above, we will drop the index $s$ for $T_s$ in the following.
Then, we have again $\eta \leq \bar \eta$, $\eta(0) = \bar \eta(0)$, and by the same computation as in \cite[Lemma 3.2]{Hofer&Schubert} (cf. \eqref{eq:f_est}),
we obtain
\begin{align}\label{eq:split.f}
    \bar \eta(t) - \bar \eta(0) \leq& \int_0^t \left( \int_{\R^3} \left|u_N(s,T(x)) - \left(u_\ast(s,x) + \frac g {6 \pi \gamma_N}\right)\right|^2 \dd \rho_\ast(s,x)  \right)^{\frac 1 2} \dd s.
\end{align}
We split
\begin{align}
\begin{aligned}
    &\left( \int_{\R^3} \left|u_N(s,T(x)) - \left(u_\ast(s,x) + \frac g {6 \pi \gamma_N}\right)\right|^2 \dd \rho_\ast(s,x)  \right)^{\frac 1 2} \\
    &\qquad\qquad\leq  \left( \int_{\R^3} |u_N(s,T(x)) - v_N(s,T(x))|^2 \dd \rho_\ast(s,x)  \dd x \right)^{\frac 1 2}  \\
& \qquad\qquad \quad  +  \left( \int_{\R^3} \left|v_N(s,T(x)) - \left(u_\ast(s,x) + \frac g {6 \pi \gamma_N}\right)\right|^2 \dd \rho_\ast(s,x)   \right)^{\frac 1 2} ,\quad
\end{aligned}
\label{split.eta}
\end{align}
where $v_N$ is the solution to \eqref{eq:V_tilde}. The first term in \eqref{split.eta} is estimated by
\begin{align} \label{E/N}
    \left( \int_{\R^3} |u_N(s,T(x)) - v_N(s,T(x))|^2 \dd \rho_\ast(s,x)   \right)^{\frac 1 2} \dd s = \frac{|V-\tilde V|_2}{\sqrt N}.
\end{align}
To estimate the second term in \eqref{split.eta} we introduce another intermediate velocity field $w_N$ which is the solution to
\begin{align} \label{w_N}
 -\Delta w_N + \nabla p_N = \frac g N \sum_i \delta_{\partial B_i} ,  ~ \dv w_N = 0 \quad \text{in} ~ \R^3 ,
\end{align}
where $\delta_{\partial B_i}$ is the normalized uniform measure on $\partial B_i$. We use now  \cite[Proposition 3.12]{Hofer18MeanField}. Note that the assumption $\phi_N \alpha_3 \leq \delta$ from \cite[Proposition 3.12]{Hofer18MeanField} is satisfied because in the notation in \cite{Hofer18MeanField} (cf. \cite[Equation (29)]{Hofer18MeanField}), $\phi_N = NR^3$ and $\alpha_3 = S_3/N$. Thus,  $\phi_N \alpha_3 \leq \delta$ is exactly \eqref{eq:bootstrap.S_3}.
Then by \cite[Proposition 3.12]{Hofer18MeanField}, it holds that
\begin{align} \label{w_N-v_N}
    \|v_N - w_N\|_\infty  \ls\frac 1 {N} S_2(  R^3 S_2 + R).
\end{align}
By \eqref{eq:single_sol} we have
\begin{align}
    w_N(X_i) = \frac g {6 \pi \gamma_N} + \frac 1  N \sum_{j \neq i}  \left(\Phi(X_i - X_j) + \frac {R^2} 6 \Delta \Phi(X_i - X_j) \right)  g.
\end{align}
Hence, using the decay of $\Delta \Phi$ for the last term in the sum, we can modify \eqref{w_N-v_N} to obtain
\begin{align}\label{eq:v_n_w_n}
   \sup_{i} \left| v_N(s,X_i) - \frac g {6 \pi \gamma_N} - \frac 1 N \sum_{j \neq i} \Phi(X_i - X_j) g \right| \ls   \frac {S_2} {N}  (  R^3 S_2  + R)+\frac{R^2S_3}{N}. \quad
\end{align}
Using the convention $\Phi(0)=0$ we have $\frac 1N \sum_{j\neq i}(\Phi g)(X_i-X_j)=((\Phi g)\ast \rho_N)(X_i)$. Thus \eqref{eq:v_n_w_n} and $u_\ast=(\Phi g)\ast\rho$ yield 
\begin{align}
\begin{aligned}
 &\left( \int_{\R^3} \left|v_N(s,T(x)) - \left(u_\ast(s,x) + \frac g {6 \pi \gamma_N}\right)\right|^2 \dd \rho_\ast(s,x)  \dd x \right)^{\frac 1 2} \\
&\qquad\ls \left( \frac {S_2} {N}  (  R^3 S_2  + R)+\frac{R^2S_3}{N}\right)  \\
&\qquad\quad +  \left( \int_{\R^3} \left| \int_{\R^3} (\Phi(T(x) - T(y)) -  \Phi(x - y))g \dd\rho_\ast(s,y) \right|^2 \dd \rho_\ast(s,x) \right)^{\frac 1 2} .\qquad
\end{aligned}\label{u.tilde.u.ast}
\end{align}
Arguing as in Step 1 of the proof of Theorem~\ref{th:Hauray} we obtain 
\begin{align} \label{Phi.eta}
    &\left( \int_{\R^3} \left| \int_{\R^3} (\Phi(T(x) - T(y)) -  \Phi(x - y))g \dd\rho_\ast(s,y) \right|^2 \dd \rho_\ast(s,x) \right)^{\frac 1 2} \\
    &\hspace{9cm}\lesssim \left(1+\frac{S_2}N\right) \bar \eta.
\end{align}
Inserting \eqref{Phi.eta} into \eqref{u.tilde.u.ast} and the result together with \eqref{E/N} into \eqref{split.eta} yields
\begin{align}\label{eq:est_uN_u}
    &\left( \int_{\R^3} \left|u_N(s,T(x)) - \left(u_\ast(s,x) + \frac g {6 \pi \gamma_N}\right)\right|^2 \dd \rho_\ast(s,x)  \right)^{\frac 1 2}\\
    &\qquad\qquad\qquad\qquad\ls \frac{|V-\tilde V|_2}{\sqrt N} +  \frac 1 {N} S_2(  R^3 S_2  + R) + \frac{R^2S_3}{N}+ \left(1+\frac{S_2}N\right) \bar \eta.
\end{align}
To further estimate the right-hand side, we use Proposition \ref{prop:energy}. Using the buckling assumptions \eqref{bootstrap.S}--\eqref{eq:bootstrap.dmin.R}
as well as assumption \eqref{ass:gamma}, the bounds  from  Proposition \ref{pro:resistance.estimates} and Proposition \ref{pro:gradient.resistance} imply $c_{\mR,T} \gs 1/N$ and $\|\nabla \mR^{-1}(X)\bar g\| \ls N$. \rh{Note that this in particular implies that \eqref{eq:lambda_cond} is satisfied for $N$ large enough since \eqref{ass:Wasserstein.00} implies $\lambda_N \to \infty$.}
 Thus by Proposition \ref{prop:energy} we obtain that for all $N$ 
 large enough, we have
\begin{align} \label{V-V.tilde(s)}
    \frac{|V-\tilde V|_2(s)}{\sqrt N} \ls \frac{|V-\tilde V|_2(0)}{\sqrt N} e^{- \frac{\lambda_N}{C}s} +  \min\left\{\frac 1 {\lambda_N},s\right\}.
\end{align}
Inserting \eqref{V-V.tilde(s)} into \eqref{eq:est_uN_u}, and using again \eqref{bootstrap.S} and \eqref{ass:gamma}, delivers
\begin{align}\label{eq:est_uN_u2}
    &\left( \int_{\R^3} \left|u_N(s,T(x)) - \left(u_\ast(s,x) + \frac g {6 \pi \gamma_N}\right)\right|^2 \dd \rho_\ast(s,x)  \right)^{\frac 1 2}\\
    &\qquad\qquad\qquad\qquad\ls \frac{|V-\tilde V|_2(0)}{\sqrt N} e^{- \frac{\lambda_N}{C}s} +  \min\left\{\frac 1 {\lambda_N},s\right\} +R+ \bar \eta.
\end{align}
Integration in time of the exponential term yields 
\begin{align} \label{int.E}
  \int_0^t  \frac{|V-\tilde V|_2(s)}{\sqrt N} \dd s \ls   \min\left\{\frac 1 {\lambda_N},t\right\}\left(\frac{|V-\tilde V|_2(0)}{\sqrt N} +  t \right).
\end{align}
Thus, inserting \eqref{eq:est_uN_u2} into \eqref{eq:split.f}, and using \eqref{int.E}, we arrive at
\begin{align}
    \bar \eta(t) \leq \bar \eta(0) + C \min\left\{\frac 1 {\lambda_N},t\right\}\left( \frac{|V-\tilde V|_2(0)}{\sqrt N} +    t \right) + C R t  + C \int_0^t \bar \eta \dd s,
\end{align}
which implies by Gronwall's inequality and recalling $\eta \leq \bar \eta$, $\eta(0) = \bar \eta(0)$
\begin{align} \label{eta.final}
   \hspace{0.5cm}  \eta(t) \leq \bar \eta(t) \leq\left(\eta(0) + C \min\left\{\frac 1 {\lambda_N},t\right\}\left(\frac{|V-\tilde V|_2(0)}{\sqrt N} +    t \right) + C R t\right) e^{C t}.  \hspace{0.5cm}
\end{align}
This estimate corresponds to \eqref{W_2.thm}, once we have shown that $T_\ast \to \infty$ as $N \to \infty$. Moreover, we can simplify to obtain \eqref{W_2.simplified}. More precisely, we observe that
\begin{align}\label{eq:R_est}
    R \lesssim N^{-1} \leq N^{-1/3} \lesssim \W_2 (\rho_N(0),\rho^0)
\end{align} 
where we used \eqref{ass:gamma} in the first inequality and \eqref{Wasserstein.lower.bound} for the last inequality. On the other hand, 
\begin{align} \label{V-V.tilde(0)}
\begin{aligned}
    \frac{|V-\tilde V|_2(0)}{\sqrt N} &\leq \frac{|V|_2(0)}{\sqrt N} +  \frac{|\tilde V|_2(0)}{\sqrt N}  \\
    &\ls \bra{ 1 + \frac 1N c^{-1}_{\mR(X^0)}} \ls \bra{ 1 + \frac{1 + RS_1(0)}{NR}} \ls 1 ,
    \end{aligned}
\end{align}
where we used \eqref{ass:V_infty}, \eqref{eq:tildeV_l2}, \eqref{eq:c_R} and, in the last estimate, \eqref{bootstrap.S}.
Hence, 
\begin{align} \label{eta.final.simplified}
    \eta(t) \leq  \bar \eta(t) \ls \left(\eta(0) +\frac{1}{\lambda_N}\right) e^{C t}.
\end{align}

\noindent \textbf{Step 3:} \emph{Estimate of $\dmin$.}
To estimate $\dmin$ we first prove  the pointwise bound \eqref{Propagation.velocities}. 
We use that $F = \mR V$ and  $\tilde V = \mathcal R^{-1} \bar g/N$ to deduce   \rh{for $N$} large enough \rh{(such that \eqref{eq:bootstrap.dmin.R} holds)}
\begin{align}
\begin{aligned}
    |F|_2 &\leq C_{\mathcal R} |V|_2 \leq C_{\mathcal R} \left( |V - \tilde V|_2 + |\tilde V|_2 \right) \leq  C_{\mathcal R} \left( |V - \tilde V|_2 + N^{-\frac 1 2} c_{\mathcal R}^{-1}  \right)\\
    &\ls R \left( |V - \tilde V|_2 + N^{\frac 1 2}\right),
    \end{aligned} \label{eq:F_2_est}
\end{align}
where we used \eqref{eq:tildeV_l2} in the penultimate inequality and Proposition \ref{pro:resistance.estimates},  equation \eqref{bootstrap.S}, and \eqref{ass:gamma} in the last inequality. 
Combining this with \eqref{V-V.tilde(s)} and \eqref{V-V.tilde(0)} yields
\begin{align} \label{F_2}
    |F|_2 &\ls R N^{\frac 1 2}. 
\end{align}
Inserting this in \eqref{eq:F_infty} and using \eqref{bootstrap.S}, \eqref{eq:sum.4+}, \eqref{bootstrap.dmin} and \eqref{ass:gamma}
yields for all $1 \leq i \leq N$
\begin{align} \label{F_infty}
    |N F_i(t)| + \abs{V_i(t)} \ls \left(1 + |V_i^0| e^{- \lambda_N \gamma_N t } \right).
\end{align}
Inserting \eqref{F_2} and \eqref{F_infty} into  \eqref{est:dmin.0}, using \eqref{eq:sum.4+}, the buckling assumptions, and  \eqref{ass:gamma}
yields  \rh{for $N$} large enough
\begin{align}
    d_{\min}(t)\ge\frac 12d_{\min}(0)-C \int_0^t \dmin\left(1 +|V^0|_\infty e^{-\gamma_N\lambda_N s}\right)  \dd s.
\end{align}
Assumptions \eqref{ass:gamma} and \eqref{ass:V_infty} and Gronwall's inequality imply
\begin{align} \label{dmin.final}
    d_{\min}(t)\ge \frac 12d_{\min}(0) e^{-C (t + 1)}.
\end{align}

\noindent \textbf{Step 4:} \emph{Conclusion of the buckling argument.} 
It remains to show that $T_\ast(N) \to \infty$ as $N \to \infty$. Indeed, the assertions \eqref{W_2.thm} and \eqref{dmin.thm} then follow immediately from \eqref{eta.final} and \eqref{dmin.final}. 

We recall the  inequalities in \eqref{bootstrap.dmin} and \eqref{bootstrap.S} that define $T_\ast$. Combining \eqref{dmin.threshold} with \eqref{ass:gamma} and \eqref{dmin.final} yields that for all $t \leq T_\ast$ and all $N$ sufficiently large,
\begin{align} \label{boostrap.aposteriori.1}
     R^2 \dmin^{-3} \to 0.
\end{align} 
Moreover, a combination of \eqref{S_1+S_2},  \eqref{eta.final.simplified}, \eqref{dmin.final} and \eqref{ass:Wasserstein.00} 
 for the first term and a combination of \eqref{S_3.dmin}, \eqref{dmin.final}, and \eqref{dmin.threshold} for the second term, yields
\begin{align} \label{boostrap.aposteriori.2}
    \frac{S_1 + S_2}N + \frac{S_3}{N^2} &\leq \frac{C_1} 2,
\end{align} 
 for all  $t \leq T_\ast$ and all $N$ sufficiently  large \ad{provided $C_1$ is chosen sufficiently large}. From \eqref{boostrap.aposteriori.1}--\eqref{boostrap.aposteriori.2} we conclude $T_\ast(N) \to \infty$.

\medskip

\noindent \textbf{Step 5:} \emph{Proof of the estimate on the phase space densities in \eqref{u.thm}.} We observe that 
\begin{align}\label{eq:phase_spatial}
    &\W_2(f_N(t),\rho_\ast(t)\otimes\delta_{(u_\ast(t)+g/(6\pi\gamma_N))})\\
    &\le \W_2(\rho_N(t),\rho_\ast(t))+\left( \int_{\R^3} \left|u_N(s,T(x)) - \left(u_\ast(s,x) + \frac g {6 \pi \gamma_N}\right)\right|^2 \!\!\dd \rho_\ast(s,x)  \right)^{\frac 1 2}\!\!.
\end{align}
The first term on the right-hand side is estimated by \eqref{W_2.simplified}. For the second term on the right-hand side of \eqref{eq:phase_spatial} we refer to \eqref{eq:est_uN_u2}. Using the already obtained bound for $\bar\eta$ from \eqref{eta.final.simplified}, and absorbing the terms proportional to $R$ like in \eqref{eq:R_est}, finishes the proof.

\medskip

\noindent \textbf{Step 6:} \emph{Proof of the estimate on the fluid velocities in \eqref{u.thm}.}
In the following we fix $z\in \R^3$ and $K\coloneqq B_1(z)$. As in Step 2, let $v_N$ be the solution to \eqref{eq:V_tilde}. 
Then, by \eqref{eq:R_bilin}, the definition of $\mR$ and \eqref{eq:C_R}, we have
\begin{align}
    \|(u_N - v_N)(t)\|^2_{\dot H^1(\R^3)} &=  (V- \tilde V)\cdot\mR (V - \tilde V) \lesssim R |V - \tilde V|_2^2 \\
    & \lesssim \frac{|V-\tilde V|_2^2(0)}{N} e^{- \frac{\lambda_N}{C }t} +   \min\left\{\frac 1 {\lambda_N^2},t^2\right\},
\end{align}
where we used \eqref{V-V.tilde(s)} together with \eqref{ass:gamma} in the last step. 

Next, we recall the definition of $w_N$ from \eqref{w_N}  and by using \eqref{w_N-v_N} and Sobolev embedding we deduce
\begin{align} \label{u_N-w_N}
&\|(u_N - w_N)(t)\|_{L^6(K)} \\
& \hspace{2cm}\lesssim \frac{|V-\tilde V|_2(0)}{\sqrt N} e^{- \frac{\lambda_N t}{C}}+ \min\left\{\frac 1 {\lambda_N},t\right\} +\frac 1 {N} S_2(  R^3 S_2 + R). \qquad
\end{align}
It remains to compare $w_N$ to $u_\ast$. To this end, we first observe that for all $x \in \R^3$ that satisfy $|x - X_i| \geq \dmin$ for all $1 \leq i \leq N$, it holds for all $\beta \geq 0$ that
\begin{align} \label{sum.reference.x}
    \sum_i \frac 1 {|x-X_i|^\beta} \lesssim S_\beta.
\end{align}
Indeed, for given $x\in \R^3$, let $X_j$ be the minimizer of $|x-X_i|$. Then, we can estimate $|x-X_j| \geq \dmin$ and, for $i \neq j$
\begin{align}
    |x-X_i| \geq \frac 1 2 | X_i - X_j| - \frac 1 2 |x - X_j| + \frac 1 2 |x - X_i| \geq  \frac 1 2 | X_i - X_j|.
\end{align}
In the following we estimate the $L^p(K)$-norm of $w_N - u_\ast$ for $p \in [2,3)$. We will set $p=2$ at the end and draw further conclusions for $p>2$ in Remark \ref{p>2}. 
The implicit constant in $\lesssim$ might depend on $p$. 
We recall from 
\eqref{eq:single_sol} the explicit form of $w_N$. Thus, using the decay of $\Phi$ and $\Delta \Phi$, as well as $|x-X_i| \gtrsim |x - X_i| + R$ for all $x \in \R^3 \setminus B_i$ and \eqref{sum.reference.x}, we have
\begin{align} 
    \|w_N &- \Phi g  \ast \rho_N\|^p_{L^p(K)} \\
    &\lesssim  \frac 1 {N^p} \sum_i \int_{B_i} \frac 1 {|x - X_i|^p} + \frac 1 {R^p} \dd x +  \int_{K} \left(\frac 1 N \sum_i \frac {R^2} {R^3 + |x-X_i|^3} \right)^p \dd x \\
    &\lesssim N^{1-p} R^{3-p}+ \frac 1 {N^p} R^{2p} \sum_i \int_{B_{\dmin(X_i)}} \frac 1 {R^{3p} + |x-X_i|^{3p}} \dd x  + R^{2 p}  N^{-p} S_3^p \\
  &\lesssim N^{1-p} R^{3-p} + R^{2 p} N^{-p} S_3^p. \label{w_N-Phi.g.ast.rho_N}
\end{align}
It remains to estimate $u_\ast - \Phi g  \ast \rho_N = \Phi g \ast (\rho_\ast - \rho_N)$.
For an optimal transport plan $T$, we compute 
\begin{align} \label{split.u_ast-Phi.g.ast.rho_N}
\begin{aligned}
   & \|u_\ast - \Phi g\ast \rho_N\|^p_{L^p(K)} \\
    &= \int_K \abs{\int_{\R^3} ((\Phi g)(x-T(y)) - (\Phi g)(x-y)) \rho_\ast(y) \dd y }^p \dd x \\
    &\lesssim \int_K \!\left(\int_{\R^3} \1_{\{\abs{x-T(y)}>\dmin\}}|T(y) - y| \left(\frac 1 {|x-T(y)|^2} + \frac 1 {|x-y|^2}\right) \rho_\ast(y) \dd y \right)^p \!\!\dd x \\
   &\quad+ \int_K \left(\int_{\R^3} \1_{\{\abs{x-T(y)}
   <\dmin\}} \left(\frac 1 {|x-T(y)|} + \frac 1 {|x-y|}\right) \rho_\ast(y) \dd y \right)^p \dd x .
   \end{aligned}
\end{align}
For the second term on the right-hand side, we estimate firstly
\begin{align} \label{split.u_ast-Phi.g.ast.rho_N.1}
    \int_K \left(\int_{\R^3} \1_{\{\abs{x-T(y)} < \dmin\}} \frac 1 {|x-y|} \rho_\ast(y) \dd y \right)^p \dd x 
    & \lesssim N^{-\frac{2p}{3}},
\end{align}
where we used \eqref{eq:fractional.convolution} together with $\|\rho_\ast\|_\infty \lesssim 1$ and, for all $x \in \R^3$, 
\begin{align} \label{particles.in.dmin.ball}
    \|\1_{\{\abs{x-T(\cdot)} < \dmin\}} \rho_\ast\|_1 = \frac 1 N \#\{i : |x - X_i| \leq \dmin\} \lesssim 
\frac 1 N,
\end{align}
by definition of $\dmin$.
Secondly, using H\"older's inequality with $1=1/p+1/p'$, \eqref{particles.in.dmin.ball} and Fubini
\begin{align} \label{split.u_ast-Phi.g.ast.rho_N.2}
\begin{aligned}
   & \int_K \left(\int_{\R^3} \1_{\{\abs{x-Ty} < \dmin\}} \frac 1 {|x-T(y)|} \rho_\ast(y) \dd y \right)^p \dd x \\
    & \lesssim N^{-\frac{p}{p'}} \int_{\R^3}\int_K  \1_{\{\abs{x-Ty} < \dmin\}} \frac 1 {|x-T(y)|^p}  \rho_\ast(y) \dd x \dd y  
    \lesssim N^{-\frac{p}{p'} } \dmin^{3-p}.
    \end{aligned}
\end{align}
For the first term on the right-hand side of \eqref{split.u_ast-Phi.g.ast.rho_N} we proceed similarly as in the proof of Theorem~\ref{th:Hauray}. We denote $s(x,y)=\1_{\{\abs{x-T(y)}>\dmin\}}\abs{\frac{1}{\abs{x-y}^{2}}+\frac{1}{\abs{x-T(y)}^{2}}}$ and apply H\"older's inequality with three factors to the product $$\abs{T(y)-y}s(x,y)=\abs{T(y)-y}^{1-2/p}\bra{\abs{T(y)-y}^{2/p}s(x,y)^{2/(2+p)}}s(x,y)^{p/{2+p}}$$ with exponents $2p/(p-2)$, $p$ and $2$, respectively
\begin{align}
\begin{aligned}
    &\abs{\int_{\R^3}\abs{\Id-T}s(x,\cdot)\rho_\ast\dd y}\\
    & \hspace{0.5cm}\ls \bra{\int_{\R^3}\abs{\Id-T}^2\rho_\ast\dd y}^{\frac{p-2}{2p}}\bra{\int_{\R^3}\abs{\Id-T}^2s(x,\cdot)^{\frac{2p}{2+p}}\rho_\ast\dd y}^{\frac 1p} \\
   & \hspace{7cm} \times \bra{\int_{\R^3}s(x,\cdot)^{\frac{2p}{2+p}}\rho_\ast\dd y}^{\frac 12}. \quad
    \end{aligned} \label{eq:trihoelder}
\end{align}
In view of the fact that $p<3$, we have by \eqref{sum.reference.x}
\begin{align}
   \int_{\R^3}s(x,y)^{\frac{2p}{2+p}}\rho_\ast(y)\dd y\ls 1+\frac 1N S_{4p/(2+p)},
\end{align}
and thus \eqref{eq:trihoelder} implies
\begin{align}
    &\int_{\R^3}\abs{\Id-T}s(x,\cdot)\rho_\ast\dd y 
    \\
    &\hspace{0.5cm}\ls \W_2(\rho_\ast,\rho_N)^{\frac{p-2}{p}}\bra{1+\frac {S_{4p/(2+p)}}N }^{\frac 12}\bra{\int_{\R^3}\abs{\Id-T}^2s(x,\cdot)^{\frac{2p}{2+p}}\rho_\ast\dd y}^{\frac 1p}. \qquad \label{eq:est_trih}
\end{align}
Using  \eqref{eq:est_trih} and integrating we arrive at
\begin{align}\label{split.u_ast-Phi.g.ast.rho_N.3}
   \hspace{4mm}  \int_K\abs{\int_{\R^3}\abs{\Id-T}s(x,\cdot)\rho_\ast\dd y}^p\dd x
    & \ls \W_2(\rho_\ast,\rho_N)^p\bra{1+\frac 1N S_{4p/(2+p)}}^{1+\frac p2}. \hspace{4mm}
\end{align}
Inserting \eqref{split.u_ast-Phi.g.ast.rho_N.1}, \eqref{split.u_ast-Phi.g.ast.rho_N.2} and \eqref{split.u_ast-Phi.g.ast.rho_N.3} into \eqref{split.u_ast-Phi.g.ast.rho_N} yields
\begin{align}
    &\|u_\ast - \Phi g\ast \rho_N\|_{L^p(K)} \\
    &\hspace{1cm} \lesssim \W_2(\rho_\ast,\rho_N) \bra{1+\frac 1N S_{4p/(2+p)}}^{\frac 1 2 +\frac 1 p} + N^{-\frac{1}{p'} } \dmin^{\frac{3-p}{p}} + N^{-\frac{2}{3}}.
\end{align}
Combining this with \eqref{u_N-w_N} and \eqref{w_N-Phi.g.ast.rho_N} yields
\begin{align}
    &\|u_\ast - u_N\|_{L^p(K)} \\
    \lesssim  & \frac{|V-\tilde V|_2(0)}{\sqrt N} e^{- \frac{\lambda_N t}{C}}+ \min\left\{\frac 1 {\lambda_N},t\right\}+ \W_2(\rho_\ast,\rho_N) \bra{1+\frac 1N S_{4p/(2+p)}}^{\frac 1 2 +\frac 1 p} \\
    & +\frac 1 {N} S_2(  R^3 S_2 + R) +  N^{1-p} R^{3-p} + R^{2p} N^{-p} S_3^p + N^{-\frac{1}{p'} } \dmin^{\frac{3-p}{p}} + N^{-\frac{2}{3}}.
\end{align}
It remains to absorb some lower order terms. Indeed, using \eqref{bootstrap.S}, \eqref{eq:sum.3}, \eqref{ass:gamma}, \eqref{dmin.threshold}, $\dmin\ls N^{-1/3}$ (which is implied by the fact that $\rho_N$ has a non-trivial limit), and \eqref{dmin.thm}, all the terms in the second line above are $O(N^{-2/3})$ and $N^{-2/3} \leq \W_2(\rho_\ast,\rho_N)$ by  \eqref{Wasserstein.lower.bound}.
Thus,
\begin{align} \label{u_ast-u.p}
   & \|u_\ast - u\|_{L^p(K)} \\
    & \hspace{.2cm} \lesssim \frac{|V-\tilde V|_2(0)}{\sqrt N} e^{- \frac{\lambda_N t}{C}}+  \min\left\{\frac 1 {\lambda_N},t\right\}+ \W_2(\rho_\ast,\rho_N) \bra{1+\frac 1N S_{4p/(2+p)}}^{\frac 1 2 +\frac 1 p}\!\!\!. \qquad
\end{align}
Using once more \eqref{bootstrap.S} and \eqref{W_2.thm}, together with \eqref{V-V.tilde(0)} and $R \lesssim N^{-1} \lesssim \W_2(\rho_\ast,\rho_N)$ as above, we conclude for $p=2$.

\begin{rem} \label{p>2}
For $p \in (2,3)$ and $x\in \R^3$, we have  
\begin{align}
\|u_N(t) - &u_\ast(t)\|_{L^p(B_1(x))} \\
    &\leq C\left( \W_2 (\rho_N(0),\rho^0) + \frac{|V-\tilde V|_2(0)}{\sqrt N} e^{- \frac{\lambda_N t}{C}} +\min\left\{\frac 1 {\lambda_N},t\right\} \right) e^{Ct}  
\end{align}
 under the strengthened assumption 
\begin{align}
    \lim_{N \to \infty} \bra{\W_2(\rho^0, \rho_N(0))  + 1/\lambda_N}^{\frac {2\left(3 - \frac{4p}{2 +p}\right)}{3+2q'}}\left(1 + N^{-\frac{4p}{3(2+p)}} \dmin^{-\frac{4p}{2+p}}(0)\right) = 0.
\end{align}
This follows from \eqref{u_ast-u.p} together with Lemma \ref{lem:sums.Wasserstein}, \eqref{W_2.simplified}, \eqref{dmin.thm} and \eqref{dmin.threshold}.
\end{rem}

\section{Estimates for the resistance matrix \texorpdfstring{$\mR$}{mR}}\label{sec:EstR}

Let $c>0$ be fixed. This section is devoted to the proof of the estimates on the three quantities $c_{\mR(Y)}$, $C_{\mR(Y)}$ and $|\nabla \mR^{-1}(Y)\bar g|$ stated in Proposition \ref{pro:resistance.estimates} and Proposition \ref{pro:gradient.resistance}. 
The proof of Proposition \ref{pro:resistance.estimates} follows from standard variational principles for the Stokes equation.
The proof of Proposition \ref{pro:gradient.resistance} is based on the method of reflections similarly  to a related result in \cite{HoeferLeocataMecherbet22}.

\rh{Throughout this section, we will consider generic particle positions $Y_i$, velocities $W_i$ and forces $G_i$ (instead of the ones corresponding to the dynamics \eqref{eq:acceleration}--\eqref{eq:fluid.micro}). To  avoid heavy notation, we will continue to use $B_i$ for the balls corresponding to these generic configurations, i.e. $B_i = \overline{B_R(Y_i)}$.}

\begin{proof}[Proof of Proposition~\ref{pro:resistance.estimates}]
	\textbf{Proof of \eqref{eq:C_R}.} Let $W \in (\R^3)^N$ and let $v$ be the solution to \eqref{eq:fru}. Then, on the one hand, $v$ is the minimizer of the Dirichlet energy among all solenoidal fields equal to $W_i$ on $B_i$ for all $i=1,\dots,N$. On the other hand, the form $W\cdot \mR(Y)W$ is equal to the Dirichlet energy (equation \eqref{eq:R_bilin}). Combining both, we have 
	\begin{align}
		W \cdot \mR W =  \| \nabla v \|^2_2 \leq \|w\|_{\dot H^1}^2, 
	\end{align}
	for all $w \in \dot H^1(\R^3)$ with $\dv w = 0$ and $w = W_i$ in $B_i$. To prove \eqref{eq:C_R} it is thus enough to construct a field $w$ with $w=W_i$ on $B_i$ and such that
		\begin{align}
		\|w\|^2_{\dot H^1(\R^3)} \leq C R |W|_2^2.
	\end{align}
	This is possible under the assumption $|Y_i - Y_j| \geq 4 R$. For the construction we refer to \cite[Lemma 4.2]{HoeferJansen20}.\\
	
    \noindent\textbf{Proof of \eqref{eq:c_R}.} Let now $v$ be the solution to the homogeneous Stokes equation with given forces $G \in (\R^3)^N$, i.e.
	\begin{align}
    \left\{\begin{array}{rl}
	    - \Delta  v + \nabla p = 0, ~ \dv   v &=0 \quad \text{in} ~ \R^3 \setminus \bigcup_i  B_i, \\
		  \nabla   v &= 0 \quad \text{in} ~  \cup_i B_i, \\
		 -\int_{\partial  B_i} \sigma[ v] n \dd  \mathcal{H}^2  &= G_i, \quad \text{for all} ~ 1 \leq i \leq N.
   \end{array}\right.
	\end{align}
	Then, denoting $W_i[G] = v(Y_i)$, we have  $G = \mR W[G]$, and thus
	\begin{align} \label{char.c_R}
	    c^{-1}_{\mR(Y)} = \sup_{|G| = 1} G \cdot \mR^{-1} G = \sup_{|G| = 1} G \cdot W[G] = \sup_{|G| = 1} \| v\|^2_{\dot H^1}.
	\end{align}
	On the other hand, we have
	\begin{align} \label{projection}
		\| v\|_{\dot H^1} \leq \|w\|_{\dot H^1},
	\end{align}
	where $w \in \dot H^1$ is given as the unique solution to the Stokes problem
	\begin{align}
	 -\Delta  w + \nabla  p =   \sum_i G_i \delta_{\partial B_i}, ~ \dv  w = 0 \quad \text{in} ~ \R^3,
	\end{align}
    and we recall that $\delta_{\partial B_i}$ is the normalized Hausdorff measure restricted to $\partial B_i$. Indeed, \eqref{projection} follows from the observation that
	\begin{align}
	    \int_{\R^3} \nabla v \cdot \nabla ( v -  w) \dd x= -\sum_i \int_{\partial  B_i}   v\cdot (\sigma[v -  w] n) \dd  \mathcal{H}^2 = 0 ,
	\end{align}
	because $ v$ is constant in $B_i$ and $\int_{\partial  B_i}   \sigma[v -  w] n \dd \mathcal{H}^2= 0$.	
Referring to \eqref{eq:single_sol} we have
	\ad{\begin{align}
		w(x) &= \sum_i w_i(x), \\
  		w_i(x) &\coloneqq\begin{cases} \frac{G_i}{6 \pi R} & \text{ in } B_i, \\
								(\Phi(x - Y_i) - \frac{R^2}{6} \Delta \Phi(x-Y_i)) G_i & \text { in } \R^3 \setminus B_i,
				\end{cases}
	\end{align}}
and where $\Phi$ is the fundamental solution of the Stokes equation (cf. \eqref{Oseen}). Thus, an explicit calculation reveals that, recalling the definition of $S_1$ from \eqref{def.S} and using Young's inequality,
	\begin{align*}
		\| v\|^2_{\dot H^1} &\leq \|w\|^2_{\dot H^1}=-\sum_i \int_{\partial B_i}w\cdot(\sigma[\ad{w_i}]n) \dd \mathcal{H}^2 \\
 & \lesssim  \sum_i \frac{|G_i|^2 }{R} + \sum_i \sum_{j \neq i} \frac{G_i G_j}{|Y_i - Y_j|} \lesssim |G|^2 (R^{-1} + S_1).
	\end{align*}
	Combining this with \eqref{char.c_R} concludes the proof.
\end{proof}
\subsection{Estimate for the gradient of the resistance matrix}

This subsection is devoted to the proof of Proposition \ref{pro:gradient.resistance}. We start with the following observation.
\begin{lem}
Let $A=(a_{ij})_{ij}\subset \R^{N\times N}$. Then the operator norm of $A$ with respect to the Euclidean metric satisfies the following estimate:
\begin{align}
\norm{A}\le \left(\sup_{i} \sum_j|a_{ij}|\sup_{k} \sum_l|a_{lk}|\right)^{\frac 1 2}.\label{eq:infty_1}
\end{align}
\end{lem}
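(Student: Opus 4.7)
The plan is to give a direct proof via a Cauchy--Schwarz argument, sometimes known as the Schur test: the inequality is the geometric-mean bound between the operator norm on $\ell^\infty$ (maximal row sum) and the operator norm on $\ell^1$ (maximal column sum). I will not need to invoke any interpolation theorem; one clean application of Cauchy--Schwarz with a well-chosen splitting suffices.

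Concretely, I would fix $x \in \R^N$ and estimate $|Ax|_2^2 = \sum_i |\sum_j a_{ij} x_j|^2$. Writing $|a_{ij}| = |a_{ij}|^{1/2} \cdot |a_{ij}|^{1/2}$ and applying Cauchy--Schwarz to the inner sum yields
\begin{align}
\Bigl|\sum_j a_{ij} x_j\Bigr|^2 \le \Bigl(\sum_j |a_{ij}|\Bigr)\Bigl(\sum_j |a_{ij}|\,|x_j|^2\Bigr)
\le \Bigl(\sup_i \sum_j |a_{ij}|\Bigr)\sum_j |a_{ij}|\,|x_j|^2.
\end{align}
Summing over $i$, swapping the order of summation, and bounding $\sum_i |a_{ij}|$ by $\sup_k \sum_l |a_{lk}|$ gives
\begin{align}
|Ax|_2^2 \le \Bigl(\sup_i \sum_j |a_{ij}|\Bigr)\Bigl(\sup_k \sum_l |a_{lk}|\Bigr)|x|_2^2,
\end{align}
and taking square roots yields \eqref{eq:infty_1}.

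There is no real obstacle here; the only thing to watch is to split $|a_{ij}|$ symmetrically before Cauchy--Schwarz so that both the row sum and the column sum appear naturally. The bound is in fact the standard $\ell^2 \to \ell^2$ Schur test, and the entries $a_{ij}$ being matrix entries of a linear map on $(\R^3)^N$ (rather than on $\R^N$) plays no role since the argument is scalar in nature; it applies verbatim once each $a_{ij}$ is interpreted as the operator norm of the corresponding $3\times 3$ block.
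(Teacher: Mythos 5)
Your proof is correct and is essentially the paper's argument: the paper expands $|Av|_2^2$ and applies $v_{j_1}v_{j_2}\le\tfrac12(v_{j_1}^2+v_{j_2}^2)$ with symmetry, which is exactly your Cauchy--Schwarz step with the splitting $|a_{ij}|=|a_{ij}|^{1/2}|a_{ij}|^{1/2}$, followed by the same row-sup/column-sup bounds after swapping the order of summation. No substantive difference.
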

This easily extends to the situation, where $A$ is a linear map $(\R^3)^N\to(\R^3)^N$.

\begin{proof}
Let $v\in \R^N$. Then we compute
\begin{align}\label{eq:norm_Av}
    |Av|_2^2&=\sum_i \left(\sum_j a_{ij}v_j\right)^2\\
    &=\sum_i\sum_{j_1}\sum_{j_2}a_{ij_1}a_{ij_2}v_{j_1}v_{j_2}.
\end{align}
We use $v_{j_1}v_{j_2}\le \frac 12 (v_{j_1}^2+v_{j_2}^2)$ and observe that both resulting sums are the same to obtain
\begin{align}
    |Av|_2^2&\le\sum_i\sum_{j_1}\sum_{j_2}a_{ij_1}a_{ij_2}v_{j_1}^2\\
    &\le \sup_i \sum_{j_2} \abs{a_{ij_2}}\sum_i\sum_{j_1}a_{ij_1}v_{j_1}^2\\
    &\le \sup_i \sum_{j_2} \abs{a_{ij_2}}\sup_{j_1}\sum_i\abs{a_{ij_1}}\sum_{j_1}v_{j_1}^2.
\end{align}
 This proves \eqref{eq:infty_1}. 
\end{proof}

In order to estimate $\nabla \mathcal R^{-1}\bar{g}$, we rely on the method of reflections, similarly as in \cite[Section 4.2]{HoeferLeocataMecherbet22}. 
For a particle configuration $Y \in (\R^3)^N$ as in the statement of Proposition \ref{pro:gradient.resistance}, we introduce the function $ w[Y] \in \dot H^1(\R^3)$ as the solution to 
\begin{align}
    	\label{eq:inertialess.intermediate}
\left\{\begin{array}{rl}
		- \Delta   w[Y] + \nabla   p[Y] = 0, ~ \dv    w[Y] &=0 \quad \text{in} ~ \R^3 \setminus \bigcup_i B_i, \\
		  \nabla  w[Y] &= 0 \quad \text{in} ~ \bigcup_i  B_i, \\
		- \int_{\partial B_i^l} \sigma[w[Y]] n \dd  \mathcal{H}^2  &= g \quad \text{for all} ~ 1 \leq i \leq N.
\end{array}\right.
\end{align}
Then, by definition of $\mR(Y)$, we have
\begin{align}
    (\mR^{-1}(Y) \bar g)_i &=   w[Y](Y_i), \\
    \nabla_{Y_j}(\mR^{-1}(Y) \bar g)_i &= \nabla_{Y_j}   w[Y](Y_i) + \delta_{ij} \nabla_{x}   w[Y](Y_i) =\nabla_{Y_j}   w[Y](Y_i)\label{eq:nabla_X.R},
\end{align}
provided these gradients exist and where the second term after the first identity in the second line vanishes because of \eqref{eq:inertialess.intermediate}.

Furthermore, we define, $  w_0[Y] \in \dot H^1(\R^3)$ as the solution to 
\begin{align}
    -\Delta   w_0[Y] + \nabla   p_0 [Y]  = \sum_i \delta_{\partial B_i} g,
\end{align}
where, as before, $\delta_{\partial B_i}$ is the normalized surface measure on $\partial B_i$.
By linearity we can write $ w_0 = \sum_i  w^{(i)}_0$, where $-\Delta w_0^{(i)}+\nabla p=\delta_{\partial B_i}g$ and it is well-known that the functions have the explicit form
    \begin{align} \label{eq:w_0.explicit}
          w^{(i)}_0(x) = \begin{cases} 
          \frac g {6 \pi R} &\qquad \text{in } B_i, \\  \left(1 - \frac {R^2}{6} \Delta \right)\Phi(x - Y_i) g & \qquad \text{otherwise}.
        \end{cases}
\end{align}
The strategy is now to show that $   w_0[Y]$ satisfies the desired estimates and to show that $  w[Y] -   w_0[Y] $ is very small.
\begin{lem} \label{lem:nabla.w_0}
    Under the assumptions of Proposition \ref{pro:gradient.resistance} we have
    \begin{align}
        \left\|\left(\nabla_{Y_j}   w_0[Y](Y_i) \right)_{ij}\right\| \le  S_2.
    \end{align}
\end{lem}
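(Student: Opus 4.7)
The starting point is the explicit decomposition $w_0[Y] = \sum_k w_0^{(k)}[Y]$ given in \eqref{eq:w_0.explicit}. A key observation is that $w_0^{(k)}[Y]$ depends on the particle configuration $Y$ only through $Y_k$, so for every $i,j$ we have
\begin{align}
\nabla_{Y_j} w_0[Y](Y_i) = \nabla_{Y_j} w_0^{(j)}[Y](Y_i),
\end{align}
(here $Y_i$ is treated as a fixed evaluation point, not a function of $Y$, in accordance with the definition used in \eqref{eq:nabla_X.R}). Hence the matrix entries decouple into single-pair contributions and we only need pointwise bounds on $w_0^{(j)}$ and its spatial derivative.

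The plan is then to distinguish the diagonal and off-diagonal entries. For $i = j$, the point $Y_j$ lies inside $B_j = \overline{B_R(Y_j)}$, and the interior formula in \eqref{eq:w_0.explicit} gives $w_0^{(j)}[Y](Y_j) = g/(6\pi R)$, which is manifestly independent of $Y_j$; hence $\nabla_{Y_j} w_0[Y](Y_j) = 0$. For $i \ne j$, the assumption $\dmin \ge 4R$ ensures $Y_i \notin B_j$, so the exterior formula in \eqref{eq:w_0.explicit} applies and
\begin{align}
\nabla_{Y_j} w_0^{(j)}[Y](Y_i) = -\Bigl(\nabla\Phi - \tfrac{R^2}{6}\nabla\Delta\Phi\Bigr)(Y_i - Y_j)\, g.
\end{align}
Using the pointwise bounds $|\nabla\Phi(x)| \lesssim |x|^{-2}$ and $|\nabla\Delta\Phi(x)| \lesssim |x|^{-4}$ (recorded in Section~\ref{subsec:notation}), together with $R \le d_{ij}/4$, we obtain the entry-wise estimate
\begin{align}
\bigl|\nabla_{Y_j} w_0[Y](Y_i)\bigr| \lesssim \frac{1}{d_{ij}^2} + \frac{R^2}{d_{ij}^4} \lesssim \frac{1}{d_{ij}^2} \qquad \text{for } i \ne j.
\end{align}

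Finally, to pass from entry-wise bounds to the operator norm, I invoke the matrix inequality \eqref{eq:infty_1} (in its vector-valued extension mentioned after the inequality), treating each entry $\nabla_{Y_j} w_0[Y](Y_i)$ as a $3\times 3$ block. Both the row sums and the column sums satisfy
\begin{align}
\sup_i \sum_{j \ne i} \frac{1}{d_{ij}^2} = S_2, \qquad \sup_k \sum_{l \ne k} \frac{1}{d_{lk}^2} = S_2,
\end{align}
by the definition \eqref{def.S} of $S_2$, so \eqref{eq:infty_1} gives $\|(\nabla_{Y_j} w_0[Y](Y_i))_{ij}\| \lesssim S_2$, which is the claim. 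There is no real obstacle here; the argument is essentially a direct computation. The only point requiring care is the identification of the diagonal contribution, where one must use that $w_0^{(i)}[Y]$ is constant in the interior of $B_i$ (so moving $Y_i$ while simultaneously moving the evaluation point to the new center gives no change), rather than naively computing $\nabla\Phi(0)$.
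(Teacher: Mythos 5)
Your proposal is correct and follows essentially the same route as the paper: the explicit form \eqref{eq:w_0.explicit} gives the entrywise bound $|\nabla_{Y_j} w_0[Y](Y_i)|\lesssim d_{ij}^{-2}$, and the operator norm is then controlled via the Schur-type estimate \eqref{eq:infty_1} with row and column sums equal to $S_2$. The only difference is that you observe the diagonal entries actually vanish (since $w_0^{(j)}$ is constant in $B_j$), whereas the paper simply absorbs them via the convention $d_{ii}=\dmin$; this is a harmless refinement, not a different argument.
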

\begin{proof}
    By the explicit form \eqref{eq:w_0.explicit}, it holds that
    \begin{align}
       |\nabla_{Y_j}   w_0[Y](Y_i)| \le \frac{1}{d_{ij}^2}.
    \end{align}
    We conclude by \eqref{eq:infty_1}.
\end{proof}

To estimate the difference $  w -   w_0$, we define for $k \in \N$ the functions $  w_k[Y]$ obtained through the method of reflections from $  w_0[Y]$.
More precisely, following the notation from \cite{Hofer18MeanField}, we introduce $Q_i$ as the solution operator that maps a  function $\varphi \in H^1_{\sigma}(B_i)$ to the solution $\psi \in \dot H^1_{\sigma}(\R^3)$ of 
\begin{equation} \label{eq:Q}
\left\{
\begin{array}{rl}
	- \Delta \psi + \nabla p = 0, ~\dv \psi &=  0 \quad \text{in} ~ \R^3 \setminus B_i, \\
	\nabla \psi &= \nabla \varphi \quad \text{in} ~ B_i, \\
\int_{\partial B_i} \sigma[\psi] n\dd \mathcal{H}^2&=0.
	\end{array}\right.
\end{equation} 
Here the index $\sigma$ in $H^1_\sigma(B_i)$ and $\dot H_\sigma^1(\R^3)$ denotes the subspace of divergence free functions in $H^1(B_i)$ and $\dot H^1(\R^3)$, respectively.
Note that the operator $Q_i$ depends only on the position of $Y_i$.

Then, we know from \cite[Proposition 3.12]{Hofer18MeanField} that 
\begin{align} \label{eq:MOR}
	  w_k[Y] \coloneqq (1 - \sum_i Q_i)^k   w_0[Y] \to   w[Y] \quad \text{in} ~ \dot H^1_\sigma(\R^3) \cap L^\infty(\R^3).
\end{align}
Note that the assumption $\phi_N \alpha_3 \leq \delta$ from \cite[Proposition 3.12]{Hofer18MeanField} is satisfied because in the notation in \cite{Hofer18MeanField} (cf. \cite[Equation (29)]{Hofer18MeanField}) $\phi_N = NR^3$ and $\alpha_3 = S_3/N$. Thus,  $\phi_N \alpha_3 \leq \delta$ is exactly \eqref{delta}.

We recall the following decay estimates:
\begin{lem}[{\cite[Lemma 3.10 and Proposition 3.12]{Hofer18MeanField}}]\label{lem:decay.Q}
Let $\varphi \in H^1_{\sigma}({B}_i) $ such that $ \nabla \varphi \in L^\infty({B}_i)$. Then,  for all $x \in \R^3 \setminus B(Y_i,2R)$, it holds that
\begin{align} \label{est:Q_i.pointwise}
	|\nabla^l (Q_i \varphi)(x)| \lesssim \frac { R^3}{|x - Y_i|^{l+2}} \|\nabla \varphi\|_{L^\infty(B_i)} .
\end{align}
Moreover, 
	\begin{align} \label{eq:Q_i.average}
		Q_i \varphi = \varphi - \fint_{\partial B_i} \varphi \dd \mathcal{H}^2 \quad \text{in} ~ B_i.
	\end{align}
\end{lem}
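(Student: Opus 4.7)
My plan is to prove the two assertions separately: the interior identity via a reciprocal argument, and the decay via a multipole expansion of the boundary integral representation for $\psi \coloneqq Q_i \varphi$ in the exterior of $B_i$, after reducing to a scale-invariant problem on the unit ball.

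For the interior identity $Q_i\varphi = \varphi - \fint_{\partial B_i} \varphi\,\dd\mathcal{H}^2$ on $B_i$: the condition $\nabla(Q_i\varphi - \varphi) = 0$ on the connected set $B_i$ forces $Q_i\varphi = \varphi + c$ in $B_i$ for some constant $c \in \R^3$. To pin down $c$, I apply the Stokes reciprocal identity in the exterior domain $\R^3 \setminus B_i$ with test function $\tilde\psi$, the exterior Stokes flow decaying at infinity with constant boundary value $V \in \R^3$ on $\partial B_i$. Using the classical fact $\sigma[\tilde\psi]n = -\tfrac{3V}{2R}$ on $\partial B_i$, the reciprocal identity reduces to
\[
V\cdot\int_{\partial B_i}\sigma[Q_i\varphi]n\,\dd\mathcal{H}^2 = -\tfrac{3}{2R}\,V\cdot\int_{\partial B_i}Q_i\varphi\,\dd\mathcal{H}^2.
\]
The zero-net-force condition in \eqref{eq:Q} forces the left-hand side to vanish for every $V \in \R^3$, so $\fint_{\partial B_i}Q_i\varphi\,\dd\mathcal{H}^2 = 0$. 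Combined with $Q_i\varphi = \varphi + c$ on $\partial B_i$ (in the trace sense) this yields $c = -\fint_{\partial B_i}\varphi\,\dd\mathcal{H}^2$.

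For the decay estimate, I rescale by $\tilde\psi(z) = \psi(Y_i + Rz)$. The rescaled field $\tilde\psi$ is a Stokes flow on $\R^3 \setminus \overline{B_1}$, decays at infinity, satisfies the zero-net-force condition on $\partial B_1$, and has boundary data $h \coloneqq \tilde\psi|_{\partial B_1}$ with $\|h\|_{L^\infty(\partial B_1)} \lesssim R\|\nabla\varphi\|_{L^\infty(B_i)}$ by Poincaré's inequality on the sphere applied to $\varphi - \fint\varphi$, and similarly $\|\nabla_{\mathrm{tan}} h\|_{L^\infty(\partial B_1)} \lesssim R\|\nabla\varphi\|_{L^\infty(B_i)}$. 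I then use the exterior boundary integral representation
\[
\tilde\psi(z) = -\int_{\partial B_1}\Phi(z-y)(\sigma[\tilde\psi]n)(y)\,\dd\mathcal{H}^2 + \int_{\partial B_1}T(z-y):(\tilde\psi\otimes n)(y)\,\dd\mathcal{H}^2,
\]
where $T(w) = -\tfrac{3}{4\pi}\tfrac{w\otimes w\otimes w}{|w|^5}$ is the Stokes stress kernel, with $|T(w)|\lesssim |w|^{-2}$. Taylor-expanding $\Phi(z-y) = \Phi(z) + O(|z|^{-2})$ around $y=0$, the leading single-layer term vanishes by the zero-net-force condition, leaving a contribution of order $|z|^{-2}\|\sigma[\tilde\psi]n\|_{L^1(\partial B_1)}$. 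The double-layer piece is directly of order $|z|^{-2}\|\tilde\psi\|_{L^\infty(\partial B_1)}$. Combining both and using the traction bound $\|\sigma[\tilde\psi]n\|_{L^\infty(\partial B_1)} \lesssim \|h\|_{C^{0,1}(\partial B_1)} \lesssim R\|\nabla\varphi\|_{L^\infty(B_i)}$ (see below) yields $|\tilde\psi(z)|\lesssim R|z|^{-2}\|\nabla\varphi\|_{L^\infty(B_i)}$. Unrolling the rescaling produces $|\psi(x)|\lesssim R^3|x-Y_i|^{-2}\|\nabla\varphi\|_{L^\infty(B_i)}$. For higher derivatives, differentiating the kernels $\Phi$ and $T$ under the integral sign picks up the extra factor $|x-Y_i|^{-l}$.

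The main obstacle is establishing the traction sup-norm bound $\|\sigma[\tilde\psi]n\|_{L^\infty(\partial B_1)} \lesssim \|h\|_{C^{0,1}(\partial B_1)}$. This amounts to up-to-the-boundary regularity for the exterior Stokes system on the unit ball with Lipschitz Dirichlet data. By standard Stokes regularity ($W^{2,p}$ estimates from $h \in W^{1,\infty}$ boundary trace combined with trace theory) and Sobolev embedding, one gets $\tilde\psi \in C^{1,\alpha}$ up to the boundary for any $\alpha < 1$, which controls the traction in $L^\infty$. Unrolling the rescaling then delivers the required $R$-independent estimate and closes the decay bound.
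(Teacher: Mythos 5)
This lemma is not proved in the paper at all: it is imported verbatim from \cite[Lemma 3.10 and Proposition 3.12]{Hofer18MeanField}, so there is no in-paper proof to compare against. Judged on its own terms, your argument for the interior identity \eqref{eq:Q_i.average} is correct: $\nabla(Q_i\varphi-\varphi)=0$ in $B_i$ gives $Q_i\varphi=\varphi+c$, and the Lorentz reciprocal identity against the translating-sphere flow (whose traction is the constant $-\tfrac{3V}{2R}$) converts the zero-net-force condition in \eqref{eq:Q} into $\fint_{\partial B_i}Q_i\varphi\,\dd\mathcal H^2=0$, which pins down $c$. (You should at least remark that the boundary terms at infinity in the reciprocal identity vanish, which holds because both fields are decaying exterior Stokes flows.)

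The decay estimate has a genuine gap. Your entire bound hinges on the traction estimate $\|\sigma[\tilde\psi]n\|_{L^\infty(\partial B_1)}\lesssim\|h\|_{C^{0,1}(\partial B_1)}$, justified by ``$W^{2,p}$ estimates from $h\in W^{1,\infty}$ boundary trace'' and Sobolev embedding into $C^{1,\alpha}$. This is false: $W^{2,p}$ solvability requires boundary data in $B^{2-1/p}_{p,p}(\partial B_1)$, which for $p>1$ demands strictly more than one derivative, and a merely Lipschitz datum (e.g.\ one with a corner of type $|x_1|$ in local coordinates) produces a solution whose normal derivative blows up logarithmically at the corner, so the traction is \emph{not} bounded. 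Fortunately you only need $\|\sigma[\tilde\psi]n\|_{L^1(\partial B_1)}$, and that is true: the Stokes Dirichlet-to-Neumann map on the sphere is of order one, mapping $H^1(\partial B_1)\to L^2(\partial B_1)$ (interpolate $H^{1/2}\to H^{-1/2}$ with $H^{3/2}\to H^{1/2}$, or use Lamb's spherical-harmonics solution), and $C^{0,1}(\partial B_1)\subset H^1(\partial B_1)$. A cleaner repair, which avoids boundary regularity altogether and is closer in spirit to the cited proof, is to first record the energy bound $\|\nabla Q_i\varphi\|_{L^2(\R^3)}\lesssim\|\nabla\varphi\|_{L^2(B_i)}\lesssim R^{3/2}\|\nabla\varphi\|_{L^\infty(B_i)}$ — which follows because $Q_i\varphi$ is exactly the Dirichlet-energy minimizer among solenoidal fields with $\nabla w=\nabla\varphi$ in $B_i$ (the zero-force condition is its Euler--Lagrange equation), compared against the competitor from Lemma \ref{lem:extension} — and then write your layer-potential representation over the intermediate sphere $\partial B_{3/2}$ instead of $\partial B_1$: there interior regularity for the Stokes system controls $\tilde\psi$ and $\sigma[\tilde\psi]$ pointwise by the energy, the net force over $\partial B_{3/2}$ still vanishes, and your Taylor-expansion/monopole-cancellation step goes through unchanged. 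With either repair the rescaling bookkeeping you give for $\nabla^l$ is correct.
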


In order to  estimate the derivatives with respect to the particle positions $Y_i$ we first estimate the derivative of $Q_i$ with respect to $Y_i$.

\begin{lem} \label{lem:decay.nabla.Q}
Let $\varphi \in H^2_{\sigma}(B(Y_i,2R))$ such that $ \nabla \varphi \in W^{1,\infty}(B(Y_i,2R)) $.
Then, \\ $\nabla_{Y_i} (Q_i \varphi)\in L^2_\loc(\R^3)$ and for all $x \in \R^3 \setminus B(Y_i,2R)$ and all $l \in \N$, it holds that
\begin{align}  \label{est:nabla.Q_i.pointwise}
	|\nabla^l  \nabla_{Y_i} (Q_i \varphi)(x)| \lesssim \frac {C R^3}{|x - Y_i|^{l+2}} \left(\|\nabla^2 \varphi\|_{L^\infty(B_i)} + \frac 1 {|x - Y_i|} \|\nabla \varphi\|_{L^\infty(B_i)} \right).
	\end{align}
\end{lem}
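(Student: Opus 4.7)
The central observation is that, since both the Stokes equations and the operator $Q_i$ are translation-invariant, a translation in $Y_i$ can be transferred to a translation in the argument $\varphi$ of $Q_i$. More precisely, I would first establish the representation
\begin{align}
    \nabla_{Y_i}(Q_i \varphi)(x) = Q_i(\nabla_y \varphi)(x) - \nabla_x(Q_i \varphi)(x) \qquad \text{for } x \in \R^3 \setminus B(Y_i,2R),
\end{align}
where $\nabla_y \varphi$ denotes the gradient of $\varphi$ viewed as a (still divergence-free) function on $B_i$ with values in $\R^{3 \times 3}$, and $Q_i$ is applied component-wise. Once this identity is available, the bound \eqref{est:nabla.Q_i.pointwise} follows directly by applying \eqref{est:Q_i.pointwise} of Lemma \ref{lem:decay.Q} to the first term (with $l$ derivatives and input $\nabla \varphi$), and to the second term with $l+1$ derivatives and input $\varphi$, adding the two contributions.

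To justify the identity, I would argue as follows. For $h \in \R^3$ small, let $\varphi_h$ denote the restriction of $\varphi$ to $B_R(Y_i+h) \subset B(Y_i,2R)$, and set $\psi_h \coloneqq Q_{Y_i+h}(\varphi_h)$. Translating the Stokes problem \eqref{eq:Q} by $-h$, the function $\tilde \psi_h(y) \coloneqq \psi_h(y+h)$ solves the same problem relative to the ball $B_R(Y_i)$ with boundary data generated by $y \mapsto \varphi(y+h)$ on $B_R(Y_i)$ (which is still solenoidal since $\dv \partial_k \varphi = \partial_k \dv \varphi = 0$). Hence
\begin{align}
   Q_{Y_i+h}(\varphi_h)(x) = Q_{Y_i}\bigl(\varphi(\cdot + h)|_{B_R(Y_i)}\bigr)(x-h).
\end{align}
Differentiating at $h = 0$ in the direction $e_k$ using the linearity and continuity of $Q_{Y_i}$ on $H^1_\sigma(B_R(Y_i))$ (combined with $\nabla \varphi \in W^{1,\infty}(B(Y_i,2R))$ to ensure an $L^\infty$-modulus of continuity in $h$) yields the claimed identity componentwise.

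The only subtlety is the $L^2_\loc$ statement for $\nabla_{Y_i}(Q_i \varphi)$: on any compact set away from $\partial B_i$ the pointwise bound \eqref{est:nabla.Q_i.pointwise} (with $l=0$) gives boundedness, while on a neighborhood of $\partial B_i$ both $Q_i(\nabla_y \varphi)$ and $\nabla_x (Q_i \varphi)$ belong to $L^2$ by \cite[Proposition 3.12]{Hofer18MeanField} applied to $\nabla \varphi$ and $\varphi$, respectively; the identity therefore promotes the pointwise statement to an $L^2_\loc$ statement.

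The main obstacle is purely bookkeeping: one has to make sure the differentiation in $h$ commutes with the operator $Q_{Y_i}$ on the prescribed function space, and that the assumption $\varphi \in H^2_\sigma(B(Y_i,2R))$ is exactly what is needed for the translated functions $\varphi(\cdot + h)$ to converge in $H^1_\sigma(B_R(Y_i))$ at a rate linear in $h$. All subsequent estimates are then an immediate application of Lemma \ref{lem:decay.Q}.
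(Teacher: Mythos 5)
Your proposal is correct and follows essentially the same route as the paper: the proof there also rests on the translation identity $(Q[Y_i]\varphi)(x) = (Q[0]\varphi(\cdot+Y_i))(x-Y_i)$, whose differentiation in $Y_i$ yields precisely your decomposition $\nabla_{Y_i}(Q_i\varphi) = Q_i(\nabla\varphi) - \nabla_x(Q_i\varphi)$, after which Lemma \ref{lem:decay.Q} gives the two terms in \eqref{est:nabla.Q_i.pointwise}. You simply spell out the limiting argument and the $L^2_\loc$ statement in more detail than the paper does.
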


\begin{proof}
	We write $Q_i = Q[Y_i]$ to denote the dependence on $Y_i$ more explicitly.
	By considering the defining equation for $Q$, \eqref{eq:Q}, we observe that
	\begin{align} \label{eq:Q[xi]}
		(Q[Y_i] \varphi)(x) = (Q[0] \varphi(\cdot + Y_i)) (x - Y_i).
	\end{align}
	Taking the gradient with respect to $Y_i$ and using Lemma \ref{lem:decay.Q} yields the desired estimates.
\end{proof}

To estimate the shape derivative $\nabla_Y   w_n[Y]$, we rewrite the series expansion \eqref{eq:MOR} as
\begin{multline} \label{eq:MOR.expansion}
	  w_n[Y] =   w_0[Y] - \sum_{i_1} Q_{i_1}   w_0[Y] + \sum_{i_1} \sum_{i_2 \neq i_1} Q_{i_2} Q_{i_1}   w_0[Y] \\
    - \sum_{i_1} \sum_{i_2 \neq i_1} \sum_{i_3 \neq i_2}  Q_{i_3} Q_{i_2} Q_{i_1}   w_0[Y] + \dots \\
	+ 
	(-1)^n \sum_{i_1} \sum_{i_2 \neq i_1} \dots \sum_{i_n \neq i_{n-1}}  Q_{i_n} Q_{i_{n-1}} \dots Q_{i_1}   w_0[Y].
\end{multline}
This representation can be directly deduced from \eqref{eq:MOR} by just using
$Q_i Q_i = Q_i$ (see \cite[Section 2]{HoferVelazquez18} for details).
As explained in \cite[Section 4.2]{HoeferLeocataMecherbet22}, this representation is more convenient for estimating the shape derivatives because it avoids terms of the form $Q_i \nabla_{Y_i} Q_i$ which are a priori not well-defined.

We recast this into the more compressed form
\begin{align} 
	  w_n[Y] &= \sum_{k=1}^N \sum_{m=0}^n  \psi^{(k)}_m[Y] \label{eq:w_n.sum.psi}, \\
	\psi^{(k)}_m[Y] &\coloneqq \sum_{\underline l \in \Pi_m^{(k)}} (-1)^m Q_{l_1} \dots Q_{l_m}    w^{(k)}_0, \\
	\Pi_m^{(k)} &\coloneqq \! \left\{ \underline l\! =\! (l_1, \dots l_m)\! \in\! \{1,\dots,N\}\colon\! l_m \neq k,  l_i \neq l_{i+1}, \! \text{ for }i\!=\!1,\dots,m-1 \right\}.
\end{align}
where we used that $Q_k    w^{(k)}_0 = 0$ to obtain the condition $l_m \neq k$ in the definition of $\Pi_m^{(k)}$.

\begin{lem} \label{lem:est.nabla.psi}
      There exists a universal constant $C$ such that under the assumptions of Proposition \ref{pro:gradient.resistance} we have for all $n \geq 1$, all $1 \leq i \leq N$ and all $x \in B_i$
    \begin{align}
     \hspace{3mm} \sum_k |\nabla_{Y_j} \psi^{(k)}_n[Y](x)|\lesssim   \frac {n} {d_{i j}^2}  \left(CR^3 S_3\right)^{n-2}\bra{R^3 S_3+R^2 S_2(\1_{n\ge 2}+R^3S_3)}. \hspace{3mm} \label{est:nabla_Y.psi} 
    \end{align}
\end{lem}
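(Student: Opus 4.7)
The plan is to expand the chain $Q_{l_1}\cdots Q_{l_n} w^{(k)}_0$ by Leibniz' rule. Since each $Q_{l_s}$ depends only on $Y_{l_s}$ and $w^{(k)}_0$ only on $Y_k$, for each multi-index $\underline l \in \Pi_n^{(k)}$ the derivative $\nabla_{Y_j}$ produces at most $n+1$ nonzero terms: one for each occurrence of $j$ in the list $(l_1,\dots,l_n,k)$. In each such term we apply Lemma \ref{lem:decay.nabla.Q} at the perturbed position and Lemma \ref{lem:decay.Q} at every other position, so that the perturbed position is characterized by the presence of one extra power of $1/d$ compared to the unperturbed chain.

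\textbf{Pointwise bound.} Iterating Lemma \ref{lem:decay.Q} along the chain, using that for $y\in B_{l_{s-1}}$ and $l_s\neq l_{s-1}$ one has $|y-Y_{l_s}|\sim d_{l_{s-1} l_s}$ thanks to $\dmin\geq 4R$, together with $\|\nabla^m w^{(k)}_0\|_{L^\infty(B_{l_n})}\lesssim d_{l_n k}^{-1-m}$ from \eqref{eq:w_0.explicit}, yields the base estimate
\[
|Q_{l_1}\cdots Q_{l_n} w^{(k)}_0(x)|\lesssim \frac{R^3}{d_{i l_1}^2}\prod_{s=2}^{n}\frac{R^3}{d_{l_{s-1} l_s}^3}\cdot\frac{1}{d_{l_n k}^2}
\]
for $x\in B_i$ with $i\notin\{l_1,\dots,l_n,k\}$. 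When $\nabla_{Y_j}$ hits position $s\in\{1,\dots,n\}$ (requiring $l_s=j$), Lemma \ref{lem:decay.nabla.Q} contributes either an extra $1/d_{l_{s-1}l_s}$ (from the $\frac{1}{|y-Y_j|}\|\nabla\varphi\|$ term) or an extra $1/d_{l_s l_{s+1}}$ (from the $\|\nabla^2\varphi\|$ term), with natural boundary interpretations at $s=1$ and $s=n$. When it hits $w^{(k)}_0$ (requiring $k=j$), the endpoint $1/d_{l_n k}^2$ sharpens to $1/d_{l_n j}^3$.

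\textbf{Summation and extraction of $1/d_{ij}^2$.} To estimate $\sum_k|\nabla_{Y_j}\psi^{(k)}_n[Y](x)|$ we sum the pointwise bounds over $k$ and over $\underline l\in\Pi_n^{(k)}$, splitting by hit position. Each sum over an interior chain index contributes $\sum_m d_{am}^{-3}\leq S_3$, accounting for $n-2$ of the $n-1$ chain bonds and producing the factor $(CR^3S_3)^{n-2}$. The factor $1/d_{ij}^2$ is extracted at the bond adjacent to $Y_j$ using the iteratively applied two-point inequality
\[
\sum_l\frac{1}{d_{al}^\alpha d_{lb}^\beta}\lesssim \frac{S_\beta}{d_{ab}^\alpha}+\frac{S_\alpha}{d_{ab}^\beta},
\]
combined with $S_{\beta+m}\leq S_\beta/\dmin^m\lesssim S_\beta/R^m$, which absorbs exponents exceeding $3$. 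Each of the $n$ positions where the derivative can hit a $Q_{l_s}$ contributes a term bounded by $\frac{1}{d_{ij}^2}(CR^3S_3)^{n-2}(R^3S_3+R^2S_2)$, while the $j=k$ hit (where summing the chain ending $\sum_{l_1}d_{il_1}^{-2}d_{l_1j}^{-3}$ through the two-point inequality yields a contribution of size $\frac{1}{d_{ij}^2}(R^3S_3)^{n-1}(S_3+S_2/R)$) produces the correction $\frac{1}{d_{ij}^2}(CR^3S_3)^{n-2}(R^6S_3^2+R^5S_2S_3)$; using $R^3S_3\leq\delta\leq 1$ to absorb $R^6S_3^2$ into $R^3S_3$, the total fits the stated form, with the factor $n$ coming from summing over the $n$ $Q$-hit positions.

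\textbf{Main difficulty.} The principal obstacle is the combinatorial bookkeeping: for each of the $n+1$ hit positions and each placement of $Y_j$ in the chain, one has to verify that the extra $1/d$ from Lemma \ref{lem:decay.nabla.Q} combines via the two-point inequality above with exactly the right neighboring chain factor to produce the prefactor $1/d_{ij}^2$, without ever producing an uncontrollable $S_\gamma$ with $\gamma$ too large. The indicator $\1_{n\geq 2}$ in $R^2S_2(\1_{n\geq 2}+R^3S_3)$ reflects that a certain interior/endpoint pairing of the extra-decay factor with a chain factor can only occur when the chain has at least two operators; the case $n=1$ must be checked separately and yields the simpler bound $\frac{1}{d_{ij}^2}(1+R^2S_2)$, consistent with the formal $n=1$ specialization of the statement.
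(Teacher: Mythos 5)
Your proposal follows essentially the same route as the paper: Leibniz expansion of $\nabla_{Y_j}$ over the (at most $n+1$) positions where $j$ can occur in the chain $(l_1,\dots,l_n,k)$, iterated use of the decay estimates in Lemma \ref{lem:decay.Q} and Lemma \ref{lem:decay.nabla.Q}, and contraction of the resulting singular sums via the two-point inequalities (the paper's \eqref{eq:sum.contraction.1}--\eqref{eq:sum.contraction.3}) to extract the prefactor $d_{ij}^{-2}$, with the factor $n$ coming from the $n$ possible $Q$-hit positions. The one point your write-up does not cover is the case $l_1=i$: your base pointwise bound is explicitly restricted to $i\notin\{l_1,\dots,l_n,k\}$, but $\Pi_n^{(k)}$ contains chains with $l_1=i$, and for those the exterior decay estimate \eqref{est:Q_i.pointwise} is not available at the evaluation point $x\in B_i$. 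One must instead use the interior identity \eqref{eq:Q_i.average}, giving $|Q_i\varphi(x)|\lesssim R\,\|\nabla\varphi\|_{L^\infty(B_i)}$ and hence chains beginning with $R\cdot R^3 d_{il_2}^{-3}$ rather than $R^3 d_{il_1}^{-2}$; these produce the $R\,d_{ij}^{-3}$-type terms in the paper's \eqref{eq:mor_1}--\eqref{eq:mor_2}, which are then absorbed using $\dmin\gtrsim R$ and $R^3S_3\le\delta$, and they are precisely the source of the indicator $\1_{n\ge 2}$ (the corresponding sub-sum runs over $\Pi_{n-1}^{(i,k)}$ and is vacuous for $n=1$). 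This is a routine but genuinely separate case; your toolkit suffices to close it, but as written the argument does not address it.
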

\begin{proof} Fix $1 \leq i \leq N$ and denote
    \begin{align}
            \Pi_m^{(i,k)} \coloneqq \{\underline l = (l_1, \dots l_m) \in \Pi_m^{(k)} : l_1 \neq i \}.
    \end{align}

    We compute
    \begin{align} \label{eq:split.nabla.psi}
    \begin{aligned}
        \nabla_{Y_j} \psi^{(k)}_n[Y] =& \delta_{kj} \sum_{\underline l \in \Pi_n^{(k)}} (-1)^n Q_{l_1} \dots Q_{l_n} \nabla_{Y_j}    w^{(k)}_0 \\
        & + \sum_{m=1}^{n} \sum_{\substack{\underline l \in \Pi_n^{(k)} \\ l_m = j}} (-1)^n Q_{l_1} \dots \nabla_{Y_j} Q_j \dots Q_{l_n}    w^{(k)}_0,
        \end{aligned}
    \end{align}
    where the last term has to be interpreted in the obvious sense if $m=1$ or $m=n$.

    We observe that by \eqref{eq:Q_i.average} we have $|Q_i \varphi (x)| \lesssim R \|\nabla \varphi\|_{L^\infty(B_i)}$ for all $x \in B_i$.
    Relying also on \eqref{eq:w_0.explicit} to compute $\nabla_{Y_j}    w^{(k)}_0$ and using  \eqref{est:Q_i.pointwise}, we estimate the first term on the right-hand side in \eqref{eq:split.nabla.psi} for $x \in B_i$
    \begin{align}
       & \biggl| \sum_{\underline l \in \Pi_n^{(j)}} (-1)^n \left(Q_{l_1} \dots Q_{l_n} \nabla_{Y_j}   w^{(j)}\right)(x)\biggr|\\
        & \leq \biggl| \sum_{\underline l \in \Pi_n^{(i,j)}} (-1)^n \left(Q_{l_1} \dots Q_{l_n} \nabla_{Y_j}   w^{(j)}\right)(x)\biggr| \\ & \hspace{2mm} + \biggl| \sum_{\underline l \in \Pi_{n-1}^{(i,j)}} (-1)^n \left(Q_i Q_{l_1} \dots Q_{l_{n-1}} \nabla_{Y_j}   w^{(j)}\right)(x)\biggr|  \\
         &\leq C^n \sum_{\underline l \in \Pi_n^{(i,j)}} \frac {R^3} {d^2_{i l_1}} \frac {R^3} {d^3_{l_1 l_2 }}
         \dots \frac {R^3} {d^3_{l_{n-1} l_n}} \frac {1} {d^3_{l_{n} j}} \\ & \hspace{2mm}  + C^{n-1} R \sum_{\underline l \in \Pi_{n-1}^{(i,j)}} \frac {R^3} {d^3_{i l_1}} \frac {R^3} {d^3_{l_1 l_2}}
         \dots \frac {R^3} {d^3_{l_{n-2} l_{n-1}}} \frac {1} {d^3_{l_{n-1} j}}  \\      
         &\lesssim  \frac {1} {d_{i j}^2}\left(CR^3 S_3 \right)^n  +\frac {R} {d_{i j}^3} \left(CR^3 S_3\right)^{n-1}\\
         &\ls\frac {1} {d_{i j}^2} \left(CR^3 S_3\right)^{n-1},
\label{eq:mor_1}
    \end{align}
    where we used \eqref{eq:sum.contraction.2}--\eqref{eq:sum.contraction.3} in the penultimate estimate. In the last estimate we used \eqref{delta} in combination with $\dmin>R$ .
    
    Similarly, regarding the second term on the right-hand side in \eqref{eq:split.nabla.psi}, we obtain, using in addition Lemma \ref{lem:decay.nabla.Q}, the estimate

    \begin{align}
        \sum_k &  \biggl| \sum_{m=1}^{n} \sum_{\substack{\underline l \in \Pi_n^{(k)} \\ l_m = j}} (-1)^n \left(Q_{l_1} \dots \nabla_{Y_j} Q_j \dots Q_{l_n}    w^{(k)}_0\right)(x)\biggr| \\
       \leq & C^n
         \sum_k  \sum_{m=1}^{n} \sum_{\substack{\underline l \in \Pi_n^{(i,k)} \\ l_m = j}} \frac {R^3} {d^2_{i l_1}} \frac {R^3} {d^3_{l_1 l_2}}
         \dots \\
         & \hspace{2cm} \times \left(\frac {R^3} {d^3_{l_{m-1} j}}  \frac {R^3} {d^4_{j l_{m+1}}} + \frac {R^3} {d^4_{l_{m-1} j}}  \frac {R^3} {d^3_{j l_{m+1}}} \right) \dots \frac {R^3} {d^3_{l_{n-1} l_{n}}} \frac {1} {d^2_{l_{n} k}} \\
         &+ C^{n-1}R \sum_k  \sum_{m=1}^{n-1} \sum_{\substack{\underline l \in \Pi_{n-1}^{(i,k)} \\ l_m = j}} \frac {R^3} {d^3_{i l_1}} \frac {R^3} {d^3_{l_1 l_2}}
         \dots \\
         & \hspace{2cm} \times \left(\frac {R^3} {d^3_{l_{m-1} j}}  \frac {R^3} {d^4_{j l_{m+1}}} + \frac {R^3} {d^4_{l_{m-1} j}}  \frac {R^3} {d^3_{j l_{m+1}}} \right) \dots \frac {R^3} {d^3_{l_{n-2} l_{n-1}}} \frac {1} {d^2_{l_{n-1} k}} \\
          \lesssim & \frac{n }{\dmin} \frac {1} {d_{i j}^2} \left(CR^3 S_3 \right)^{n-1} R^3 S_2 + \1_{n \geq 2} \frac{n }{\dmin} \frac {R} {d_{i j}^3}  \left(CR^3 S_3\right)^{n-2} R^3 S_2\\
          \le& \frac{n}{\dmin}\frac{1}{d_{ij}^2}R^2S_2(CR^3S_3)^{n-2}\bra{R^3S_3+\1_{n\ge2}}.\label{eq:mor_2}
    \end{align}

    Here, to be more precise, for  $m=1$ and $n \geq 2$ the term in the second line reads 
    \begin{align}
      \sum_k  \sum_{\substack{\underline l \in \Pi_n^{(i,k)} \\ l_m = j}}  \left(\frac {R^3} {d^2_{i j}}  \frac {R^3} {d^4_{j l_{2}}} + \frac {R^3} {d^3_{i j}}  \frac {R^3} {d^3_{j l_{2}}} \right)  \frac {R^3} {d^3_{l_2 l_3}}
         \dots  \dots \frac {R^3} {d^3_{l_{n-1} l_{n}}} \frac {1} {d^2_{l_{n} k}},
    \end{align}
    and for $m=n$ and $n \geq 2$ it reads
    \begin{align}
        \sum_k  \sum_{\substack{\underline l \in \Pi_n^{(i,k)} \\ l_n = j}} \frac {R^3} {d^2_{i l_1}} \frac {R^3} {d^3_{l_1 l_2}}
         \dots \left(\frac {R^3} {d^3_{l_{n-1} j}}  \frac {1} {d^3_{j k}} + \frac {R^3} {d^4_{l_{n-1} j}}  \frac {1} {d^2_{j k}} \right) ,
    \end{align}
    while for $n=m=1$ it reads
        \begin{align}
        \sum_{k \neq j}  \left(\frac {R^3} {d^2_{ij}}  \frac {1} {d^3_{k j}} + \frac {R^3} {d^3_{ij}}  \frac {1} {d^2_{k j}} \right),
    \end{align}
    and analogous statements hold for the terms corresponding to $m=1$ and $m=n-1$ in the third line.

    Combining \eqref{eq:mor_1} and \eqref{eq:mor_2} yields \eqref{est:nabla_Y.psi}.
\end{proof}

\begin{proof}[Proof of Proposition \ref{pro:gradient.resistance}]
    Let $\Xi_\delta
    \subset (\R^3)^N$ be the subset of particle configurations for which the assumptions of Proposition~\ref{pro:gradient.resistance} hold. We choose $\delta<C^{-1}$ so that $n(C\delta)^n$ is summable where $C$ is the constant from Lemma \ref{lem:est.nabla.psi}. Then \eqref{eq:w_n.sum.psi} and Lemma \ref{lem:est.nabla.psi} and \cite[Proposition 3.12]{Hofer18MeanField} imply that the map $Y \mapsto   w_n[Y](x)$ for $x\in B_i$ is a Cauchy sequence in $W^{1,\infty}(\Xi_\delta)$. Therefore, by \eqref{eq:nabla_X.R}, $\nabla_{Y_j} (\mR(Y)\bar g)_i$ exists and is estimated by
    \begin{align} \label{eq:operator.norm.sum}
        |\nabla_{Y_j} (\mR(Y)\bar g)_i| \lesssim & \left|\nabla_{Y_j}   w_0[Y](Y_i)\right|+\sum_{n=1}^\infty  \sum_k  |  \nabla_{Y_j} \psi^{(k)}_n[Y](Y_i) |.
    \end{align}
    The first term on the right-hand side of \eqref{eq:operator.norm.sum} satisfies the desired estimate due to Lemma \ref{lem:nabla.w_0}. 
    For the remaining part of the right-hand side of \eqref{eq:operator.norm.sum} we use \eqref{eq:infty_1}. Then, Lemmas \ref{lem:est.nabla.psi} and \ref{le:sums} yield
    \begin{align}
        \sum_{n=1}^\infty \left\| \left(  \sum_k \nabla_{Y_j} \psi^{(k)}_n[Y](Y_i) \right)_{ij}\right\| &\lesssim  S_2  n\left(C\delta\right)^{n-2}\bra{\delta+R^2 S_2(\1_{n\ge 2}+\delta)}\\
        &\ls S_2\bra{1+R^2S_2}.     
    \end{align}
    Inserting these estimates in \eqref{eq:operator.norm.sum} concludes the proof.
\end{proof}

\section{Control of the minimal distance}\label{sec:MinDist}

In this Section, we prove Lemma \ref{lem:dmin_control} and Lemma \ref{lem:VF.infty}. Although some of the statements may be stated for general particle configurations $Y$ that satisfy certain assumptions, we will use the notation $X, V$, and $F$, while still stating all assumptions that are needed for $X$.

We start with the following lemma that yields a convenient representation of the forces $F_i$. 
In particular it singles out the self-interaction term $6\pi RV_i$, which is the force that a single particle exerts on the fluid Stokes flow.

\begin{lem} \label{le:force.representation}
Let $r>0$, and $d \geq 4 r$.
Let $A = B_d(0) \setminus B_{d/2}(0)$ and $\tilde A = B_d(0) \setminus B_{r}(0)$.
Then, there exists a weight $\omega \colon A \to \R^{3\times3}$ with
\begin{align} \label{omega}
    \fint_A \omega \dd x = \Id, \qquad    \|\omega\|_\infty \leq C 
\end{align}
for some universal constant $C$ and such that the following holds true.
For all $(w,p) \in H^1(\tilde A) \times L^2(\tilde A)$ with  
\begin{align}\label{eq:Stokes_A}
    - \Delta w + \nabla p = 0, ~ \dv w = 0 \quad \text{in} ~ \tilde A
\end{align}
we have
\begin{align}\label{eq:br}
    -\int_{\partial B_r(0)} \sigma[w] n \dd \mathcal H^2 = 6 \pi r\left( \fint_{\partial B_r(0)} w \dd \mathcal H^2 - \fint_A \omega w \dd x\right).
\end{align}
\end{lem}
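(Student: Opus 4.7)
The identity is a rigorous version of ``Stokes' law'': the drag on the sphere equals $6\pi r$ times the difference between the particle velocity (mean of $w$ on $\partial B_r$) and a spatial average of the ambient flow $w$ in $A$. I would prove it by combining the Lorentz reciprocal theorem with a radial averaging and a spherical-harmonic analysis. For each $G\in\R^3$, introduce the exterior Stokes test flow $V_G(x) := 6\pi r\bigl(\Phi(x)+\tfrac{r^2}{6}\Delta\Phi(x)\bigr)G$ on $\R^3\setminus B_r(0)$, which has $V_G=G$ on $\partial B_r(0)$, decays at infinity, and satisfies $\sigma[V_G]n\equiv -\tfrac{3}{2r}G$ uniformly on $\partial B_r(0)$ (in particular, drag $-\int_{\partial B_r}\sigma[V_G]n\,\mathrm d\mathcal H^2=6\pi r G$). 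The Lorentz reciprocal identity applied to $(w,V_G)$ in $B_\rho\setminus B_r(0)$ for $\rho\in(r,d]$, after substituting the explicit boundary values on $\partial B_r$, rearranges to
\[
 G\cdot\!\left(-\!\int_{\partial B_r(0)}\!\sigma[w] n\,\mathrm d\mathcal H^2\right) = 6\pi r\, G\cdot\fint_{\partial B_r(0)}\!w\,\mathrm d\mathcal H^2 \;-\;\mathcal I_\rho(w,G),
\]
with $\mathcal I_\rho(w,G):=\int_{\partial B_\rho}\!\bigl(V_G\cdot\sigma[w]\hat x - w\cdot\sigma[V_G]\hat x\bigr)\,\mathrm d\mathcal H^2$. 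Since the LHS is independent of $\rho$, so is $\mathcal I_\rho$ (equivalently, the associated flux vector is divergence-free, a form of Stokes reciprocity).

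Exploiting this $\rho$-invariance, I would average over $\rho$ with a smooth radial weight $\chi$ supported in $(d/2,d)$ and normalized by $\int_{d/2}^d\chi\,\mathrm d\rho=1$; the coarea formula converts $\mathcal I_\rho$ into a volume integral over $A$:
\[
 \mathcal I_\rho(w,G) = \int_A \chi(|x|)\bigl(V_G\cdot\sigma[w]\hat x - w\cdot\sigma[V_G]\hat x\bigr)\,\mathrm d x.
\]
To cast this as $\tfrac{6\pi r}{|A|}G\cdot\int_A\omega(x)w(x)\,\mathrm d x$, I would use the axially symmetric ansatz $\omega(x)=c(|x|)\Id + d(|x|)\,\hat x\otimes\hat x$ and expand $w$ into its Lamb (spherical-harmonic) modes in $\tilde A$. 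By the $\mathrm{SO}(3)$-covariance of $\omega$ and the parity/orthogonality of spherical-harmonic integrals (odd-degree tensor integrals over $S^2$ vanish), modes of angular degree $n\geq 2$ automatically satisfy the identity, since both $\fint_{\partial B_r}w_n$ and $\fint_A\omega w_n$ vanish. The nontrivial conditions therefore come only from the $n=1$ sector: the regular constant mode $w\equiv V$ (for which $S=0$ and $\fint_{\partial B_r}w=V$) forces $\fint_A\omega = \Id$, while the singular Stokeslet mode $w = (\Phi+\tfrac{r^2}{6}\Delta\Phi)h$ (for which $S=h$ and $\fint_{\partial B_r}w = h/(6\pi r)$) forces $\int_A\omega(\Phi+\tfrac{r^2}{6}\Delta\Phi)\,\mathrm d x=0$. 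Rotational $n=1$ modes $w=\Omega\times x$ contribute trivially since $\fint_A\omega(\Omega\times x) = 0$ by the same parity argument.

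These two conditions reduce to explicit one-dimensional moment equations on the radial profiles $c,d$, which are readily solved with bounded solutions. The normalization $\fint_A\omega=\Id$ holds by construction (and can be verified a posteriori by inserting a constant $w\equiv W$ into the identity). The bound $\|\omega\|_\infty\leq C$ is uniform because the geometric constraint $d\geq 4r$ places the support of $\omega$ at $|x|\sim d$, making the one-dimensional moment equations for $c,d$ well-conditioned under this scaling. The main technical obstacle is the rigorous verification that the axially symmetric ansatz satisfies the reciprocity identity on \emph{all} Stokes modes in the annulus: this requires carefully invoking the Lamb expansion and checking that the integrands are matched precisely on the $n=1$ sector while vanishing on every higher sector by symmetry and parity.
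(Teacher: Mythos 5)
Your overall strategy (a reciprocal-theorem duality converting the drag into a boundary flux on a larger sphere, then smearing that flux into a volume average over $A$) is in the right spirit, but the step where you construct $\omega$ by mode-matching has a concrete gap: your enumeration of the $n=1$ Lamb sector is incomplete. Besides constants, rotations, rotlets, the Stokeslet and the potential dipole, the annulus $\tilde A$ also supports the \emph{interior growing pressure mode} ("Stokeson") $u=\tfrac{1}{10}\bigl(2|x|^2c-x(c\cdot x)\bigr)$, $p=c\cdot x$, which is a perfectly admissible $(w,p)\in H^1(\tilde A)\times L^2(\tilde A)$ solution of \eqref{eq:Stokes_A}. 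It exerts zero net force on $\partial B_r(0)$ (being regular in the ball) but has $\fint_{\partial B_r(0)}u=\tfrac{r^2}{6}c\neq 0$, so \eqref{eq:br} forces the additional constraint $\fint_A\omega\, u=\tfrac{r^2}{6}c$, i.e.\ an $r$-dependent $\rho^4$-moment condition on your radial profiles that is \emph{not} implied by the two conditions you impose ($\fint_A\omega=\Id$ and $\fint_A\omega V_G=0$); with only those two, the identity fails on this mode (the left-hand side of your moment is of order $d^2\gg r^2$ for generic profiles). The potential dipole similarly imposes a moment condition on the $\hat x\otimes\hat x$ profile. These are repairable (add the missing moment equations and re-solve), but as written the constructed $\omega$ does not satisfy the lemma. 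Two further points you assert without proof: the completeness of the Lamb expansion in the $H^1\times L^2$ class on the annulus together with continuity of both sides of \eqref{eq:br} (needed to pass from modes to general $w$), and the uniformity of $\|\omega\|_\infty$ as $r/d\to 0$, which requires checking that the moment system for the radial profiles stays well-conditioned.

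The paper avoids all of this with a single integration by parts: it writes the classical single-sphere solution as a curl, $\mathcal U[W]=\nabla\times\Psi[W]$ with $\Psi[W](x)=\bigl(\tfrac{3r}{2|x|}-\tfrac{r^3}{2|x|^3}\bigr)\tfrac{W\times x}{2}$, takes a cutoff $\eta$ equal to $1$ on $B_{d/2}(0)$ and supported in $B_d(0)$, and tests \eqref{eq:Stokes_A} with $\nabla\times(\eta\Psi[W])$, which is automatically solenoidal, coincides with $\mathcal U[W]$ near $\partial B_r(0)$ (producing exactly the terms $-W\cdot\int_{\partial B_r(0)}\sigma[w]n$ and $6\pi r\,W\cdot\fint_{\partial B_r(0)}w$ via the uniform traction $-\sigma[\mathcal U[W]]n=\tfrac{6\pi rW}{|\partial B_r(0)|}$), and vanishes near $\partial B_d(0)$. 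The leftover term is $-\int_A w\cdot\dv\sigma[\nabla\times(\eta\Psi[W]),\eta\mathcal P[W]]\dd x$, supported exactly where $\eta$ varies, and $\omega$ is \emph{defined} as this divergence (up to normalization); $\fint_A\omega=\Id$ then follows from the divergence theorem and $\|\omega\|_\infty\le C$ from the explicit formulas and $\|\nabla^k\eta\|_\infty\le C d^{-k}$. No mode expansion, density argument, or moment system is needed.
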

\begin{rem} \label{rem:force.repr}
A direct consequence is that, as long as $\dmin \geq 8 R$,
we have
\begin{align} \label{force.repr}
    F_i = 6 \pi R\left(V_i-\fint_{A_i} \omega_i u \dd x\right) \eqqcolon 6 \pi R(V_i-(u)_i).
\end{align}
where 
\begin{align}
    A_i &\coloneqq B_{\dmin/2}(X_i) \setminus B_{\dmin/4}(X_i), \\
    \omega_i(x) &\coloneqq \omega(x - X_i).
\end{align}
\end{rem}
\begin{proof}
    For $W \in \R^3$ let
    \begin{align}
        \Psi[W](x) &\coloneqq \left(\frac {3r}{2 |x|} - \frac{r^3}{2 |x|^3}  \right) \frac{W \times x}{2}, \\
        \mathcal U[W] &\coloneqq  \nabla \times \Psi[W], \\
        \mathcal P[W](x) &\coloneqq \nabla \left(\frac{1}{|x|} \right) r W.
    \end{align}
    A straightforward computation shows (see e.g. \cite[Section 2]{Hillairet21}) that
   \begin{align}
        \mathcal U[W] &= W \quad \text{in} ~ B_r(0), \\
        - \Delta \mathcal U[W] + \nabla \mathcal P[W] &= 0 \quad \text{in} ~ \R^3 \setminus B_r(0), \\
        -\sigma[\mathcal U [W]] n &= \frac{6 \pi r W}{|\partial B_r0)|} \quad \text{on} ~ \partial B_r(0).
    \end{align}
    Let $\eta \in C_c^\infty(B_d(0))$ be a cut-off function with
    $\eta = 1$ in $B_r(0)$ and,    for $k = 0,1,2,3$, 
    \begin{align} \label{est:eta}
        \|\nabla^k \eta\|_{\infty} \leq \frac{C}{d^k}.
    \end{align}
    We test \eqref{eq:Stokes_A} with $\nabla\times (\eta \Psi[W])$ (which is solenoidal) and after some integration by parts we deduce for $w$ as in the statement of the lemma
    \begin{multline}
        -W \cdot \int_{\partial B_r(0)} \sigma[w] n  \dd \mathcal H^2 \\
        = 6 \pi r W \cdot \fint_{\partial B_r(0)} w \dd \mathcal H^2 - \int_A w  \cdot \dv \sigma[ \nabla \times (\eta  \Psi[W]),\eta \mathcal P[W]] \dd x.
    \end{multline}
    Note that we explicitly denoted the dependence on the cut-off pressure here, since the test function fails to satisfy the homogeneous Stokes equation in $A$. Thus,  defining 
    \begin{align}
        \omega = \frac{|A|}{6 \pi r} \sum_i e_i \otimes \dv \sigma[ \nabla \times (\eta  \Psi[e_i]),\eta \mathcal P[e_i]]
    \end{align}
    yields \eqref{eq:br} because of linearity in $W$.  Moreover, the identity in \eqref{omega} follows immediately from integration by parts and the estimate in \eqref{omega} follows from  \eqref{est:eta} and the explicit form of $\mathcal U, \mathcal P$.
\end{proof}


Considering the particle configuration $X$ with associated velocity field $u$, by \eqref{omega}, the term
\begin{align}
    (u)_i = \fint_{A_i}\omega_i u\dd x,
\end{align}
in \eqref{force.repr} can be viewed as a suitable average  fluid velocity 
in the vicinity of the particle. Then, \eqref{force.repr} resembles the classical Stokes law $F_i = 6 \pi R(V_i-(u)_i)$. By the choice $d \sim \dmin \gg R$ this average fluid velocity $(u)_i$ represents the ``mean-field'' velocity  at $X_i$ that is generated by the other particles, since on this scale  the contribution of the particle $i$ itself becomes negligible.

The following lemma provides helpful estimates on $(u)_i$.
\begin{lem} \label{lem:(u)_i}
Assume that $\dmin \geq 8 R$ (at some time $t \geq 0)$. Then,
\begin{enumerate} 
\item[(i)] For all $1\leq i \leq N$
\begin{align} \label{est:omega.u}
    \abs{ \fint_{A_i}\omega_i u\dd x}\ls \sum_j \frac{\abs{F_j}}{d_{ij}}+R^3 S_2 S_4^{1/2} |F|_2 .
\end{align}

\item[(ii)] For all $i \neq j$
\begin{align} 
    \abs{ \fint_{A_i}\omega_i u\dd x-\fint_{A_j}\omega_j u\dd x}&\ls d_{ij}\bra{\sum_k \abs{F_k}\bra{\frac{1}{d_{ik}^2}+\frac 1{d_{jk}^2}}+R^3 S_2 S_6^{1/2} \abs{F}_2} \\
    & \lesssim d_{ij}\bra{|F|_\infty S_2+R^3  S_2 S_6^{1/2} \abs{F}_2}.
\label{est:omega.u.diff}
\end{align}
\end{enumerate}
\end{lem}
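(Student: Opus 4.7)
I compare $u$ with the explicit auxiliary flow $\tilde u\in\dot H^1(\R^3)$ solving $-\Delta\tilde u+\nabla\tilde p=\sum_k F_k\delta_{\partial B_k}$, $\dv\tilde u=0$ in $\R^3$. By \eqref{eq:single_sol}, $\tilde u=\sum_k w_k$ with $w_k(x)=(\Phi(x-X_k)+\tfrac{R^2}{6}\Delta\Phi(x-X_k))F_k$ for $|x-X_k|>R$ and $w_k(x)=F_k/(6\pi R)$ inside $B_k$. The residual $u-\tilde u$ satisfies the homogeneous Stokes equation in the fluid domain and is controlled by the method of reflections exactly as in \cite[Proposition~3.12]{Hofer18MeanField}, applied with the surface forces $F_k$ in place of $g/N$. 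Combining the bound $\|\nabla\tilde u\|_{L^\infty(B_j)}\ls \sum_{k\neq j}|F_k|/d_{jk}^2\le |F|_2 S_4^{1/2}$ (Cauchy--Schwarz) with the decay estimate \eqref{est:Q_i.pointwise} and the smallness \eqref{delta}, the single-reflection term dominates and yields
\begin{align*}
\|u-\tilde u\|_{L^\infty(\R^3\setminus\bigcup_j B(X_j,2R))}&\ls R^3 S_2\, S_4^{1/2}|F|_2,\\
|\nabla(u-\tilde u)(z)|&\ls R^3 |F|_2 S_4^{1/2}\sum_j|z-X_j|^{-3}.
\end{align*}

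\textbf{Proof of (i).} Writing $\fint_{A_i}\omega_i\tilde u = \sum_k F_k\fint_{A_i}\omega_i w_k$ and using $|w_k(x)|\ls |F_k|/|x-X_k|$ together with $|x-X_i|\sim\dmin$ and $|x-X_k|\gs d_{ik}$ on $A_i$ for $k\neq i$ gives $|\fint_{A_i}\omega_i\tilde u|\ls \sum_k|F_k|/d_{ik}$. Combined with the $L^\infty$-bound on $u-\tilde u$ and $\|\omega\|_\infty\le C$ this proves \eqref{est:omega.u}.

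\textbf{Proof of (ii).} After the change of variables $y=x-X_i$ on $A_i$ and $y=x-X_j$ on $A_j$, the difference reduces to $\fint_A\omega(y)[u(y+X_i)-u(y+X_j)]\,dy$ on $A=B_{\dmin/2}(0)\setminus B_{\dmin/4}(0)$, and I split $u=\tilde u+(u-\tilde u)$. For the $\tilde u$-part, I use the elementary Lipschitz-type inequality $|\Phi(a)-\Phi(b)|\ls|a-b|(|a|^{-2}+|b|^{-2})$ (proved by the mean value theorem when $\min(|a|,|b|)\ge 2|a-b|$ and directly from $|\Phi(a)|\le|a|^{-1}$ otherwise), together with the analogue $|a|^{-4}+|b|^{-4}$ for $\Delta\Phi$. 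Since $|y+X_i-X_k|\gs d_{ik}$ and $|y+X_j-X_k|\gs d_{jk}$ for every $k$ (with the convention $d_{ii}=d_{jj}=\dmin$), summation in $k$ yields the main term $d_{ij}\sum_k|F_k|(d_{ik}^{-2}+d_{jk}^{-2})$. For the error part, I integrate $\nabla(u-\tilde u)$ along a piecewise smooth curve from $y+X_i$ to $y+X_j$ of length $\ls d_{ij}$ that stays at distance $\gs\dmin$ from every particle; along such a curve $\sum_j|z-X_j|^{-3}\ls S_3$, so the error is $\ls d_{ij}R^3 S_3 |F|_2 S_4^{1/2}$, which is absorbed into $d_{ij}R^3 S_2 S_6^{1/2}|F|_2$ via the Cauchy--Schwarz inequalities $S_3\le S_2^{1/2}S_4^{1/2}$ and $S_4\le S_2^{1/2}S_6^{1/2}$. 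The simplified bound in (ii) follows from $\sum_k|F_k|/d_{ik}^2\le|F|_\infty S_2$. The main technical obstacle is producing such a detour of length $\ls d_{ij}$ avoiding all particles (possible because the fluid domain is connected and $\dmin\gg R$) and tracking the exponents $S_\beta$ with sufficient care that the final error precisely matches the claimed $d_{ij}R^3S_2 S_6^{1/2}|F|_2$.
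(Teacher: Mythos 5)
Your proof is essentially correct but follows a genuinely different route from the paper. The paper never estimates $u$ pointwise: it tests the Stokes equation with the auxiliary flow $\varphi=\Phi\ast\bigl(\omega_i^T W\tfrac{1}{|A_i|}\1_{A_i}\bigr)$, which converts $W\cdot\fint_{A_i}\omega_i u$ into boundary integrals $-\sum_j\int_{\partial B_j}\varphi\cdot\sigma[u]n$; splitting off the means $\varphi_j=\fint_{\partial B_j}\varphi$ produces the term $\sum_j\varphi_j\cdot F_j$ (the main term, since $|\varphi_j|\lesssim |W|/d_{ij}$), and the zero-mean remainder is rewritten as $2\int\nabla\psi\cdot e(u-w_0)$ and handled by Cauchy--Schwarz in $\dot H^1$, using only the \emph{energy} bound $\|u-w_0\|_{\dot H^1}\lesssim R^{3/2}S_2|F|_2$ of Lemma~\ref{l:tilde_u}, which follows from a variational principle valid under $\dmin\geq 4R$ alone. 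You instead prove pointwise $L^\infty$ and $C^1$ bounds on $u-\tilde u$ via the method of reflections and then estimate the averages directly; for (ii) you integrate $\nabla(u-\tilde u)$ along a detour of length $\lesssim d_{ij}$ avoiding the particles. Your exponent bookkeeping is consistent ($S_3 S_4^{1/2}\leq S_2 S_6^{1/2}$ by Cauchy--Schwarz on the sums), the Lipschitz estimate for $\Phi$ is exactly \eqref{eq:K.Lipschitz}, and the detour construction works in $\R^3$ since at most $O(d_{ij}/\dmin+1)$ particles come within $\dmin/8$ of the straight segment and each costs an arc of length $O(\dmin)$.

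Two caveats. First, convergence of the method of reflections requires the smallness \eqref{delta}, i.e.\ $R^3S_3\leq\delta$, which you invoke but which is \emph{not} among the hypotheses of the lemma ($\dmin\geq 8R$ alone does not imply it). So your argument proves a version of the lemma with an extra hypothesis; this suffices for every application in the paper (where \eqref{eq:bootstrap.S_3} is in force), but it does not prove the statement as written, whereas the paper's energy-based argument does. Second, the quantitative $L^\infty$ and pointwise gradient bounds you attribute to \cite[Proposition~3.12]{Hofer18MeanField} are stated there for uniform forces $g/N$; extending them to non-uniform forces $F_k$ with the constant $|F|_2S_4^{1/2}=\sup_j\sum_{k}|F_k|d_{jk}^{-2}$-type control is routine given Lemma~\ref{lem:decay.Q} but must actually be carried out. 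Relatedly, your $L^\infty$ bound as stated on $\R^3\setminus\bigcup_j B(X_j,2R)$ is too generous: near $\partial B(X_j,2R)$ the first reflection contributes a term of order $R\,|F|_2S_4^{1/2}$, not $R^3S_2S_4^{1/2}|F|_2$; the stated constant is only correct at distance $\gtrsim\dmin$ from all particles, which is all you use since the evaluation takes place on $A_i$.
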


For the proof we will rely on the following lemma adapted from \cite[Proposition 3.9]{Hofer18MeanField}.
\begin{lem}\label{l:tilde_u}
    Assume that $\dmin \geq 4 R$.
    Let $w_0$ be the solution to 
    \begin{align}\label{eq:tilde_u}
        -\Delta w_0 + \nabla p = \sum_i \delta_{\partial B_i} F_i, ~ \dv w_0 = 0 \quad \text{in} ~ \R^3.
    \end{align}
    Then, there exists a universal constant $C<\infty$ such that
    \begin{align} \label{est.u-utilde}
        \frac 1 C \|w_0 - u\|^2_{\dot H^1(\R^3)} \leq  \| \nabla w_0\|^2_{L^2(\cup_i B_i)} \leq  C R^3  |F|_2^2 S_2^2.
    \end{align}
\end{lem}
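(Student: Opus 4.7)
The plan is to prove the two inequalities separately: the second by an explicit computation using the structure of $w_0$, and the first via a variational projection argument.

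\textbf{Second inequality.} Decompose $w_0 = \sum_j w_0^{(j)}$ by linearity, where $w_0^{(j)}$ is the single-particle Stokes solution given by \eqref{eq:single_sol} with $G=F_j$. Fix $i$. Inside $B_i$, $w_0^{(i)}$ equals the constant $F_i/(6\pi R)$ so $\nabla w_0^{(i)}=0$, giving $\nabla w_0 = \sum_{j\neq i}\nabla w_0^{(j)}$ on $B_i$. For $j\neq i$ and $x\in B_i$, the decay estimates for $\Phi$ together with $|x-X_j|\ge d_{ij}-R\ge d_{ij}/2$ (using $d_{ij}\ge\dmin\ge 4R$) give $|\nabla w_0^{(j)}(x)|\lesssim|F_j|/d_{ij}^2$. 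Cauchy--Schwarz then yields
\[
\int_{B_i}|\nabla w_0|^2 \lesssim R^3 \Bigl(\sum_{j\neq i}\tfrac{|F_j|}{d_{ij}^2}\Bigr)^2 \le R^3\, S_2 \sum_{j\neq i}\tfrac{|F_j|^2}{d_{ij}^2},
\]
and summing in $i$ produces the claimed bound $R^3 S_2^2 |F|_2^2$.

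\textbf{First inequality.} The crucial observation is that $u$ is the orthogonal projection of $w_0$, in the $\dot H^1$ inner product, onto the closed subspace
\[
K := \{\xi\in \dot H^1_\sigma(\R^3) : \nabla\xi = 0 \text{ in each } B_i\}.
\]
Indeed, $u\in K$ since $u=V_i$ in $B_i$; moreover, for any $\xi\in K$ with values $\xi_i$ in $B_i$, testing the distributional equation \eqref{eq:tilde_u} gives $\int\nabla w_0\cdot\nabla\xi = \sum_i F_i\cdot\xi_i$, while integration by parts in $\R^3\setminus\cup_i B_i$ (using that $u$ is Stokes there, that $\nabla\xi=0$ inside the balls, and that $-\int_{\partial B_i}\sigma[u]n=F_i$) produces the same value for $\int\nabla u\cdot\nabla\xi$. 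Hence $u-w_0\perp K$, so $u=P_K w_0$ and in particular $\|u-w_0\|_{\dot H^1}\le\|\xi-w_0\|_{\dot H^1}$ for every $\xi\in K$.

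It then suffices to construct a competitor $\xi\in K$ with $\|\xi-w_0\|_{\dot H^1}^2\lesssim\|\nabla w_0\|_{L^2(\cup_i B_i)}^2$. Exploiting that the balls $B_{2R}(X_i)$ are pairwise disjoint (since $\dmin\ge 4R$), I set $c_i:=\fint_{B_i} w_0$ and define
\[
\xi:= \begin{cases} c_i & \text{in } B_i,\\ w_0 + \eta_i & \text{in } A_i := B_{2R}(X_i)\setminus B_i,\\ w_0 & \text{in } \R^3\setminus\bigcup_i B_{2R}(X_i), \end{cases}
\]
where $\eta_i$ is a solenoidal extension in the annulus $A_i$ with boundary trace $c_i-w_0$ on $\partial B_i$ and $0$ on $\partial B_{2R}(X_i)$. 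Such $\eta_i$ exists by the Bogovski construction since the compatibility $\int_{\partial B_i}(c_i-w_0)\cdot n = -\int_{B_i}\dv w_0 = 0$ holds. After rescaling $B_i$ and $A_i$ to the fixed reference pair $B_1(0)$, $B_2(0)\setminus B_1(0)$, the standard Bogovski/trace/Poincar\'e bound (using that $w_0-c_i$ has zero mean on $B_i$) yields $\|\nabla\eta_i\|_{L^2(A_i)}^2\lesssim\|\nabla w_0\|_{L^2(B_i)}^2$ with a universal constant. Since $\nabla(\xi-w_0)=-\nabla w_0$ in $B_i$ and the modifications live on pairwise disjoint sets, summing in $i$ gives $\|\xi-w_0\|_{\dot H^1}^2 \lesssim \|\nabla w_0\|_{L^2(\cup_i B_i)}^2$, which completes the proof.

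\textbf{Main obstacle.} The principal technical point is ensuring that the Bogovski extension estimate holds with a \emph{universal} constant, independent of $R$ and $N$. This is achieved precisely through the scaling to the fixed reference annulus, which is made possible by the hypothesis $\dmin\ge 4R$ guaranteeing disjointness of the shells $B_{2R}(X_i)$.
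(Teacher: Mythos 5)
Your argument is correct and follows essentially the same route as the paper: you characterize $u$ as the $\dot H^1$-orthogonal projection of $w_0$ onto fields with vanishing gradient in the balls (equivalently, $w_0-u$ minimizes the Dirichlet norm over the corresponding affine class), build a competitor by a solenoidal cut-off in the disjoint shells $B_{2R}(X_i)\setminus B_i$, and then exploit the explicit single-particle decay of $\nabla w_0$ off the particles. The only cosmetic differences are that you re-derive by hand the extension step that the paper delegates to Lemma~\ref{lem:extension}, and that you close the sum estimate by Cauchy--Schwarz rather than via the double-sum expansion with \eqref{eq:sum.contraction.1} and Young's inequality.
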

\begin{proof}
    By standard arguments, $v = w_0 - u$ is the minimizer of $\|\varphi\|_{\dot H^1(\R^3)}$ among all $\varphi \in \dot H^1(\R^3)$ that are divergence free and satisfy $\nabla \varphi =  \nabla w_0$ in $\cup_i B_i$. By using Lemma \ref{lem:extension} below, there exists a $\varphi \in \dot H^1(\R^3)$ with
    \begin{align}
        \|\varphi\|_{\dot H^1(\R^3)}^2 \lesssim  \|\nabla w_0\|^2_{L^2(\cup_i B_i)} \lesssim R^3 \sum_i \|\nabla w_0\|^2_{L^\infty(B_i)}.
    \end{align}
    We use the explicit form of $w_0$ analogous to \eqref{eq:w_0.explicit} together with \eqref{eq:sum.contraction.1} to conclude
    \begin{align}
        \|w_0 - u\|^2_{\dot H^1(\R^3)} &\lesssim  R^3 \sum_i \left( \sum_{j \neq i} \frac{|F_{j}|}{d_{ij}^2} \right)^2 = R^3 \sum_i \sum_{j \neq i} \sum_{k \neq i} \frac{|F_{j}|}{d_{ij}^2} \frac{|F_{k}|}{d_{ik}^2}  \\
        &\lesssim R^3 S_2  \sum_{k} \sum_{j} \frac {|F_j|}{d_{jk}}\frac{|F_k|}{d_{jk}}  
        \lesssim R^3 |F|_2^2 S_2^2,
    \end{align}
    where we used Young's inequality in the last inequality.
\end{proof}

\begin{lem}[{\cite[Lemma 3.5]{Hofer18MeanField}}] \label{lem:extension}
	Let $x \in \R^3$, $r> 0$ and assume that $\varphi \in H^1(B_r(x))$ is divergence free.
 Then, there exists a divergence free function  $\psi \in H^1_{0}(B_{2 r}(x))$
	such that $ \nabla  \psi = \nabla \varphi$ in $B_r(x)$ and
	\begin{align}
		\|\nabla \psi\|_{L^2(B_{2 r}(x))} \leq C \|\nabla \varphi \|_{L^2(B_r(x))},
	\end{align}
 for some universal constant $C$.

 Moreover, if in addition $\int_{\partial B_r(x)} \varphi = 0$, then $\psi$ can be chosen to satisfy in addition $\psi = \varphi$ in $B_r(x)$.
\end{lem}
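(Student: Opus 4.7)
The plan is to reduce to a model problem by scaling and then use a Bogovskii-type correction to turn an arbitrary extension of $\varphi$ into a divergence-free one. By translation and the scaling $\varphi_r(y) := \varphi(x + r y)$ (for which $\|\nabla \varphi_r\|_{L^2(B_1)} = r^{-1/2}\|\nabla \varphi\|_{L^2(B_r(x))}$ in $3$D), it suffices to prove the statement for $x = 0$, $r = 1$, with the domain pair $B_1 \subset B_2$.

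First, I would normalize $\varphi$ by an additive constant. Set $c := \fint_{B_1} \varphi$ (in the case of the ``moreover'' clause, set $c := 0$, which is still admissible because $\int_{\partial B_1} \varphi = 0$ provides the Poincar\'e bound  $\|\varphi\|_{L^2(B_1)} \le C\|\nabla \varphi\|_{L^2(B_1)}$). In either case, $\tilde \varphi_0 := \varphi - c$ is divergence-free on $B_1$, has $\nabla \tilde \varphi_0 = \nabla \varphi$, and satisfies $\|\tilde \varphi_0\|_{H^1(B_1)} \le C \|\nabla \varphi\|_{L^2(B_1)}$ by Poincar\'e. By the trace theorem, its trace on $\partial B_1$ belongs to $H^{1/2}(\partial B_1)$ with the same control.

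Next, I would extend $\tilde \varphi_0$ to a function $\Phi \in H^1_0(B_2)$ which equals $\tilde \varphi_0$ on $B_1$ and vanishes on $\partial B_2$; such an extension exists with $\|\Phi\|_{H^1(B_2)} \le C \|\nabla \varphi\|_{L^2(B_1)}$ by a standard trace/lifting argument on the annulus $B_2 \setminus B_1$ (for instance, lift the boundary data on $\partial B_1$ via a bounded right inverse of the trace operator and leave zero data on $\partial B_2$). Now $\Phi$ is not divergence-free on the annulus, but $f := \mathrm{div}\, \Phi$ is supported in $\overline{B_2 \setminus B_1}$ (the traces on $\partial B_1$ match, so there is no singular contribution), belongs to $L^2(B_2 \setminus B_1)$, and has zero mean because
\begin{equation}
\int_{B_2 \setminus B_1} f = \int_{\partial B_2} \Phi \cdot n - \int_{\partial B_1} \Phi \cdot n = 0 - \int_{\partial B_1} (\varphi - c)\cdot n = -\int_{B_1} \mathrm{div}\,\varphi = 0.
\end{equation}
Applying the Bogovskii operator on the Lipschitz annulus $B_2 \setminus B_1$ yields $w \in H^1_0(B_2 \setminus B_1)$ with $\mathrm{div}\, w = f$ and $\|\nabla w\|_{L^2} \le C\|f\|_{L^2} \le C \|\nabla \varphi\|_{L^2(B_1)}$. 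Extending $w$ by zero to $B_2$ and defining
\begin{equation}
\psi := \Phi - w
\end{equation}
produces the desired object: $\psi \in H^1_0(B_2)$ by construction, $\mathrm{div}\, \psi = 0$ (the divergence vanishes in $B_1$ since $w \equiv 0$ and $\Phi = \tilde \varphi_0$ there, and cancels in the annulus by choice of $w$), and $\psi = \tilde \varphi_0 = \varphi - c$ on $B_1$, so $\nabla \psi = \nabla \varphi$; in the ``moreover'' case we have $c = 0$, hence $\psi = \varphi$ on $B_1$. The $H^1$ estimate follows by combining the bounds on $\Phi$ and $w$, then undoing the rescaling.

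The only mildly delicate step is invoking Bogovskii on the annular domain $B_2 \setminus B_1$ with a constant independent of $r$; this is standard since $B_2 \setminus B_1$ is a fixed Lipschitz (in fact smooth) domain, and the $r$-dependence in the original statement is absorbed into the scaling factor, so no obstacle arises there.
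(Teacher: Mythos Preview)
The paper does not give its own proof of this lemma; it is quoted verbatim from \cite[Lemma~3.5]{Hofer18MeanField}. Your argument --- scaling reduction to the unit ball, subtracting the mean (or using the generalized Poincar\'e inequality under $\int_{\partial B_1}\varphi=0$), lifting the trace to an $H^1_0(B_2)$ extension, and then correcting the divergence on the annulus via the Bogovski\u{\i} operator --- is correct and is precisely the standard construction one expects behind such a citation.
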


\begin{proof}[Proof of Lemma \ref{lem:(u)_i}]
\textbf{(i)}: For given $W\in \R^3$ consider the solution $\varphi$ to
\begin{align}
    -\Delta \varphi+\nabla p= \omega_i^T W \frac{1}{\abs{A_i}}\1_{A_i}, ~ \dv \varphi=0 \quad \text{in} ~\R^3.
\end{align}
We compute
\begin{align}
\begin{aligned}
    W\cdot\fint_{A_i}\omega_i u\dd x&=\fint_{A_i}u\cdot \omega_i^T W \dd x
    =\int_{\R^3} u\cdot (-\Delta \varphi+\nabla p)\dd x\\
    &= 2 \int_{\R^3} \nabla u\cdot e\varphi\dd x
    = 2 \int_{\R^3} eu\cdot\nabla\varphi\dd x
    \\
    &=-\sum_j \int_{\partial B_j} \varphi\cdot (\sigma[u]n)\dd \mathcal{H}^2\\
    &=-\sum_j \int_{\partial B_j} (\varphi-\varphi_j)\cdot (\sigma[u]n)\dd \mathcal{H}^2+\sum_j \varphi_j\cdot F_j ,
    \end{aligned}
    \label{eq:comp_test}
\end{align}
where $\varphi_j\coloneqq\fint_{\partial B_j} \varphi\dd \mathcal{H}^2$.
Using that $\varphi= \Phi\ast \omega_i^T W \frac{1}{\abs{A_i}}\1_{A_i}$, that $\dist(A_i,B_j)\gs d_{ij}$, the second property from \eqref{omega}, and the decay of $\Phi$, we have 
\begin{align}
    \abs{\varphi_j}\ls \frac{\abs{W}}{d_{ij}}.
\end{align}
Statement (i) follows from this and 
\begin{align} \label{eq:double_diff}
     \left|\sum_j \int_{\partial B_j} (\varphi-\varphi_j)\cdot (\sigma[u]n)\dd \mathcal{H}^2 \right|\ls R^3 S_2 S_4^{1/2}\abs{F}_2\abs{W},
\end{align}
which we prove in the following. Here we capitalize on the fact that the mean of $\varphi-\varphi_j$ over $\partial B_j$ vanishes  whence we can renormalize $\sigma[u]n$ by a constant. We start by noting that since $e u = 0$ in $B_j$ for all $j$ we have
\begin{align} \label{dipole.0}
    -\sum_j \int_{\partial B_j} (\varphi-\varphi_j)\cdot (\sigma[u]n)\dd \mathcal{H}^2= 2 \int_{\R^3 \setminus \cup_j B_j} \nabla \psi \cdot e   u \dd x = 2 \int_{\R^3} \nabla \psi \cdot e  u \dd x
\end{align}
for all  divergence free functions $\psi \in \dot H^1(\R^3)$ such that for all $1 \leq j \leq N$, $\psi = \varphi - \varphi_j$ in $B_j$. Moreover, using the weak formulation for $w_0$ defined as in \eqref{eq:tilde_u}, we have
\begin{align}
    \int_{\R^3} \nabla \psi \cdot e   w_0 \dd x = 0.
\end{align}
Hence we deduce
\begin{align} \label{dipole.3}
    \left|\sum_j \int_{\partial B_j} (\varphi-\varphi_j)\cdot (\sigma[u]n)\dd \mathcal{H}^2\right|  &= \left|2 \int_{\R^3} \nabla \psi \cdot e  (u - w_0) \dd x\right|  \\
    &\ls \norm{\nabla(u-w_0)}_2\norm{\nabla \psi}_2. \qquad 
\end{align}
By Lemma \ref{lem:extension} and using that   $\nabla \varphi$ decays like $ \nabla \Phi$ we can choose $\psi$  such that
\begin{align}
    \norm{\nabla \psi}_{L^2(\R^3)}^2\ls \sum_j \norm{\nabla\varphi}_{L^2(B_j)}^2\ls \sum_j R^3\frac{\abs{W}^2}{d_{ij}^4}\ls  R^3  S_4 \abs{W}^2.
\end{align}
Inserting this in \eqref{dipole.3} and using \eqref{est.u-utilde} yields \eqref{eq:double_diff}.

\medskip

\noindent\textbf{(ii)}:  For $W\in \R^3$ consider the solution $\varphi$ to
\begin{align}
    -\Delta \varphi+\nabla p= \left(\omega_i^T \frac{1}{\abs{A_i}}\1_{A_i} -\omega_j^T \frac{1}{\abs{A_j}}\1_{A_j}\right)W , ~ \dv \varphi=0 \quad \text{in} ~ \R^3.
\end{align}
Since $\varphi=\Phi\ast \bra{\omega_i^T \frac{1}{\abs{A_i}}\1_{A_i} -\omega_j^T \frac{1}{\abs{A_j}}\1_{A_j}}W$, we have
\begin{align}\label{eq:phi_k}
    \abs{\varphi_k}\ls \abs{W}d_{ij}\bra{\frac 1{d_{ik}^2}+\frac 1{d_{jk}^2}}.
\end{align}
As in \eqref{eq:comp_test} we compute 
\begin{multline}
    W\cdot\bra{\fint_{A_i}\omega_i u\dd x-\fint_{A_j}\omega_i u\dd x} \\
    =\sum_k \int_{\partial B_k} (\varphi-\varphi_k)\cdot (\sigma[u]n)\dd \mathcal{H}^2+\sum_k \varphi_k\cdot F_j.
\end{multline}
Using \eqref{eq:phi_k} for the second term it remains to show that
\begin{align} \label{dipole.ii.1}
   \left|\sum_k \int_{\partial B_k} (\varphi-\varphi_k)\cdot (\sigma[u]n)\dd \mathcal{H}^2 \right|\ls R^{3}d_{ij}S_2 S_6^{1/2} \abs{F}_2 \abs{W}.
\end{align}
As in the proof of (i), using \eqref{est.u-utilde}, we can estimate 
\begin{align} \label{dipole.ii.2}
    \left|\sum_k \int_{\partial B_k} (\varphi-\varphi_k)\cdot (\sigma[u]n)\dd \mathcal{H}^2 \right| \lesssim  R^{3/2}  S_2 |F|_2  \norm{\nabla \psi}_2
\end{align}
for any divergence free function $\psi \in \dot H^1(\R^3)$ with $\psi = \varphi - \varphi_k$ in $B_k$.
Relying again on Lemma~\ref{lem:extension} and the decay of $\nabla \varphi$ like $\nabla^2 \Phi$ we find such a $\psi$ with
\begin{align}
    \norm{\nabla \psi}_2^2\ls \sum_j \norm{\nabla\varphi}_{L^2(B_j)}^2\ls \sum_k R^3d_{ij}^2\abs{W}^2\bra{\frac{1}{d_{ik}^6}+\frac{1}{d_{jk}^6}}\ls R^3d_{ij}^2 S_6\abs{W}^2.
\end{align}
Inserting this estimate in \eqref{dipole.ii.2} yields \eqref{dipole.ii.1}.
\end{proof}

\begin{proof}[Proof of Lemma \ref{lem:dmin_control}]
Let $i,j$ be given. By definition of $d_{ij}$, we have
\begin{align}\label{eq:basic_dij_est}
   d_{ij}(t)\ge d_{ij}(0)-\abs{\int_0^t V_i(s)-V_j(s)\dd s}.
\end{align}
To estimate the second term we consider the differential equality \eqref{eq:acceleration} which we combine with \eqref{force.repr} to obtain
\begin{align}
   \dot V_i=\lambda_N\left(g-NF_i\right)
   =\lambda_N\left(g+6\pi\gamma_N\left(\fint_{A_i}\omega_i u\dd x- V_i\right)\right).
\end{align}
Integration yields
\begin{align}\label{eq:V_integrated}
   V_i(s)=e^{-6\pi\gamma_N\lambda_N t}V_i(0)+\int_0^s e^{6\pi\gamma_N \lambda_N(\tau-s)}\lambda_N\left(g+6\pi\gamma_N(u)_i(\tau)\right)\dd \tau. \qquad 
\end{align}
We use \eqref{eq:V_integrated} and \eqref{ass:V.Lipschitz} to estimate
\begin{align} \label{est:int.V_i-V_j}
   & \abs{\int_0^t V_i(s)-V_j(s)\dd s}\\
   & \hspace{2mm}\le \frac{1}{6\pi\gamma_N\lambda_N}\abs{V_i-V_j}(0)+6\pi\gamma_N\lambda_N\int_0^t\int_0^s e^{6\pi\gamma_N\lambda_N(\tau-s)}\abs{(u)_i-(u)_j}(\tau)\dd \tau\dd s\\
    &\hspace{2mm} \le \frac 12 d_{ij}(0)+\int_0^t \abs{(u)_i-(u)_j}(\tau)\dd \tau.
\end{align}
By \eqref{est:omega.u.diff} we conclude \eqref{est:dmin.0}. 
\end{proof}

The representation formula from Lemma~\ref{le:force.representation} allows to control the $l_\infty$-norm of $V$ in terms of the $l_2$-norm (which is in turn controlled by the modulated energy argument Proposition~\ref{prop:energy}). 
Due to Lemma~\ref{le:force.representation}, $V_i$ has a damping effect on itself while the influence of the other particles through the forces $F_k$ for $k\neq i$ is modulated with a linear decay in the distance (Lemma~\ref{lem:(u)_i}), which allows us to estimate the effect of the background velocity with Cauchy-Schwarz. 

\begin{proof}[Proof of Lemma \ref{lem:VF.infty}]
We start with \eqref{eq:acceleration} and use \eqref{force.repr} to write for any $i=1,\dots,N$ that
\begin{align}\label{eq:Vidot}
    \dot{V_i}=\lambda_N(-6\pi\gamma_N V_i+g+\gamma_N H_i),
\end{align}
where
\begin{align}
    H_i\coloneqq 6\pi\fint_{A_i}\omega_i u\dd x.
\end{align}
This implies
\begin{align}
    |V_i|(t) \lesssim |V^i_0|e^{-\lambda_N \gamma_N t} + \gamma_N^{-1} + \sup_{s \leq t}|H_i(t)|.  
\end{align}
By \eqref{est:omega.u} we have
\begin{align} \label{est:R_i}
    |H_i| &\lesssim |F|_2 \left( S_2^{\frac 1 2}  +R^{3}S_2 S_4^{\frac 1 2}\right),
\end{align}
which finishes the estimate of $V_i$ in \eqref{eq:F_infty}.
    
For the estimate of $F_i$, we use again Lemma \ref{le:force.representation}, the estimate of $V_i$ in \eqref{eq:F_infty} that we have just shown, and \eqref{est:R_i} to deduce
\begin{align}
    |F_i(t)| &\lesssim R(|V_i(t)| + |H_i(t)|) \\
    &\lesssim  R\left(\frac 1 \gamma_N  + |V_i^0| e^{-\gamma_N \lambda_N t} + |F|_{2,t} \left( S_{2,t}^{\frac 1 2}  +R^{3}S_{2,t} S_{4,t}^{\frac 1 2}\right)\right)
\end{align}
as claimed.
\end{proof}

\section*{Acknowledgements}

The authors are grateful for the opportunity of an intensive two-week ''Research in pairs`` stay at ''Mathematisches Forschungszentrum Oberwolfach``  during which the main part of this article was conceived. R.S. thanks David G\'erard-Varet and the Institut de Mathématiques de Jussieu-Paris Rive Gauche for the hospitality during the stay in Paris. R.H. thanks Barbara Niethammer and the Hausdorff Center for Mathematics for the hospitality during the stays in Bonn. We thank the anonymous referees for valuable suggestions which helped to improve the final version of the article.

R.H. has been supported  by the German National Academy of Science Leopoldina, grant LPDS 2020-10.

R.S. has been supported by the Deutsche Forschungsgemeinschaft (DFG, German Research Foundation) through the research training group ''Energy,
Entropy, and Dissipative Dynamics (EDDy)`` (Project-ID 320021702 /GRK2326) and the collaborative research
centre ‘The mathematics of emergent effects’ (CRC 1060, Project-ID 211504053).

\appendix

 \section{Estimates of  sums of inverse particle distances}\label{sec:sums}

\begin{lem} \label{le:sums}
The following estimate holds: 
\begin{align} \label{eq:sum.3}
	S_3 = \sup_i  \sum_{j} \frac 1 {d_{ij}^3} \lesssim \frac{\log N}{\dmin^{3}}.
\end{align}
In addition, for $n \geq 4$, we have
\begin{align} \label{eq:sum.4+}
	S_n = \sup_i  \sum_{j} \frac 1 {d_{ij}^n} \lesssim \frac{1}{\dmin^{n}}.
\end{align}
Moreover, for all $0\le j,k\le N$, the following expressions can be estimated
\begin{align}
        \label{eq:sum.contraction.1}
        \sum_{i} \frac{1}{d^2_{ij} d_{ik}^2} &\lesssim  \frac {S_2} {d_{jk}^2}, \\
        \label{eq:sum.contraction.2}
        \sum_{i} \frac{1}{d^2_{ij} d_{ik}^3} &\lesssim   \frac { S_3} {d_{jk}^2},  \\ \label{eq:sum.contraction.3}
        \sum_{i} \frac{1}{d^3_{ij} d_{ik}^3}& \lesssim \frac { S_3}  {d_{jk}^3}.
    \end{align}
\end{lem}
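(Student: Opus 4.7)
The common geometric input is that the open balls $B_{\dmin/2}(X_i)$ are pairwise disjoint, so in any spherical shell of inner radius $m\dmin$ and thickness $\dmin$ around a fixed point at most $Cm^2$ particles can lie, and $m$ never needs to exceed $CN^{1/3}$ for the worst-case packing (since $\sum_m m^2 \ls N$ forces $m \ls N^{1/3}$).

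For \eqref{eq:sum.3} and \eqref{eq:sum.4+} I would fix $i$, single out the self-term $j=i$ (which contributes $1/\dmin^n$, absorbed later by the convention $d_{ii}=\dmin$), and decompose the remaining particles into shells around $X_i$:
\begin{align}
\sum_{j\ne i}\frac{1}{d_{ij}^n}\ls \sum_{m=1}^{CN^{1/3}}\frac{m^2}{(m\dmin)^n}=\frac{1}{\dmin^n}\sum_{m=1}^{CN^{1/3}}m^{2-n}.
\end{align}
For $n=3$ the partial harmonic series produces $\log N$, and for $n\ge 4$ the series converges uniformly in $N$, giving the claimed bounds.

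For the contractions \eqref{eq:sum.contraction.1} and \eqref{eq:sum.contraction.3}, the triangle inequality $d_{jk}\le d_{ij}+d_{ik}$ gives $\max(d_{ij},d_{ik})\ge d_{jk}/2$, so I would split the sum into the two cases. In the case $d_{ij}\ge d_{jk}/2$ one pulls $1/d_{jk}^\alpha$ outside and bounds the leftover one-index sum directly by $S_\alpha$, which yields $S_2/d_{jk}^2$ for \eqref{eq:sum.contraction.1} and $S_3/d_{jk}^3$ for \eqref{eq:sum.contraction.3}; the opposite case is identical by the symmetry of the exponents.

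The only real difficulty is \eqref{eq:sum.contraction.2} with asymmetric exponents $(2,3)$. The favourable case $d_{ij}\ge d_{jk}/2$ immediately yields $S_3/d_{jk}^2$. In the complementary case $d_{ik}\ge d_{jk}/2$ (so $d_{ij}<d_{jk}/2$) the naive pull-out produces $S_2/d_{jk}^3$, which is \emph{not} dominated by $S_3/d_{jk}^2$ in general (one can have $S_2 \sim N^{1/3}/\dmin^2$ while $d_{jk}S_3\sim d_{jk}\log N/\dmin^3$). The plan is to re-enter the packing argument on the restricted sum: exactly as in the proof of \eqref{eq:sum.3}--\eqref{eq:sum.4+}, for any $L\ge\dmin$,
\begin{align}
\sum_{i:\,d_{ij}<L}\frac{1}{d_{ij}^{2}}\ls \sum_{m=1}^{CL/\dmin}\frac{m^{2}}{(m\dmin)^{2}}\ls \frac{L}{\dmin^{3}}.
\end{align}
Applied with $L=d_{jk}/2$, together with $1/d_{ik}^3\le 8/d_{jk}^3$, the second case contributes at most $1/(d_{jk}^{2}\dmin^{3})\ls S_3/d_{jk}^{2}$, where the last inequality uses the trivial lower bound $S_3\ge 1/\dmin^3$ coming from the self-term $d_{ii}=\dmin$; the marginal regime $d_{jk}\le 2\dmin$ and the isolated contributions $i\in\{j,k\}$ are absorbed by the same inequality $S_3\ge 1/\dmin^3$. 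This asymmetric contraction is the only genuine obstacle: all remaining parts of the lemma reduce directly to disjoint-ball packing and the triangle inequality.
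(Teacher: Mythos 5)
Your proof is correct, and for \eqref{eq:sum.3}, \eqref{eq:sum.4+}, \eqref{eq:sum.contraction.1} and \eqref{eq:sum.contraction.3} it follows the standard route (the single sums are proved by exactly the disjoint-ball packing argument you give, which is the content of the reference the paper cites; the symmetric contractions by the same triangle-inequality case split). The one place where you diverge is \eqref{eq:sum.contraction.2}, and there you have made the estimate harder than it is. The paper's observation is that in the case $d_{ij}\le d_{ik}$ (so $d_{ik}\ge d_{jk}/2$) one should pull out only \emph{two} of the three powers of $d_{ik}$: writing $\frac{1}{d_{ij}^2 d_{ik}^3}=\frac{1}{d_{ik}^2}\cdot\frac{1}{d_{ij}^2 d_{ik}}\le \frac{4}{d_{jk}^2}\cdot\frac{1}{d_{ij}^2 d_{ik}}$ and then using the ordering $d_{ik}\ge d_{ij}$ to bound $\frac{1}{d_{ij}^2 d_{ik}}\le\frac{1}{d_{ij}^3}$. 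Together with the symmetric case this gives the pointwise bound $\frac{1}{d_{ij}^2 d_{ik}^3}\ls \frac{1}{d_{jk}^2}\bigl(\frac{1}{d_{ij}^3}+\frac{1}{d_{ik}^3}\bigr)$, and summing in $i$ yields $S_3/d_{jk}^2$ in one line. Your workaround — the truncated packing bound $\sum_{d_{ij}<L}d_{ij}^{-2}\ls L\dmin^{-3}$ combined with the trivial bound $S_3\ge\dmin^{-3}$ — is valid (and your checks of the marginal regime and the terms $i\in\{j,k\}$ are fine), but it re-imports the geometric packing input into an estimate that is purely a consequence of the triangle inequality and the definition of $S_3$; the paper's version is both shorter and independent of the disjointness of the balls.
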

\begin{proof} 
The proof of \eqref{eq:sum.3} and \eqref{eq:sum.4+} \noeqref{eq:sum.3} can be found in \cite[Lemma 4.8]{NiethammerSchubert19}. 
It remains to prove \eqref{eq:sum.contraction.1}--\eqref{eq:sum.contraction.3}. Due to the triangle inequality $d_{jk} \leq d_{ij} + d_{ik}$ (which continues to hold despite our convention $d_{ll} = \dmin$), in the case that $d_{ij} \leq d_{ik}$ it holds that $d_{ik} \geq d_{jk}/2$ while in the other case $d_{ik} < d_{ij}$ we have $d_{ij} \geq d_{jk}/2$. Thus,
\begin{align}
     \frac{1}{d^2_{ij} d_{ik}^2} &\lesssim \frac{1}{d_{jk}^2} \left(\frac{1}{d^2_{ij}} +  \frac{1}{d^2_{ik}} \right), \\
     \frac{1}{d^2_{ij} d_{ik}^3} &\lesssim \frac{1}{d_{jk}^2} \left(\frac{1}{d^3_{ij}} +  \frac{1}{d^3_{ik}} \right), \\
     \frac{1}{d^3_{ij} d_{ik}^3} &\lesssim \frac{1}{d_{jk}^3} \left(\frac{1}{d^3_{ij}} +  \frac{1}{d^3_{ik}} \right).
\end{align}
Taking the sum in $i$ immediately yields \eqref{eq:sum.contraction.1}--\eqref{eq:sum.contraction.3}.
\end{proof}

\rs{\section{Proof of an Hauray type result for second-order binary systems with asymptotically vanishing inertia}
\label{sec:binary}

\begin{proof}[Proof of Theorem~\ref{th:binary}]
    We will argue for $p<\infty$ here, the arguments for $p=\infty$ are analogous. We set $\tilde V_i=\frac 1N\sum_{j\neq i} K(X_i-X_j)$.
    
    \noindent \textbf{Step 1:} \emph{modulated energy argument.} We define the modulated energy
    \begin{align}
        E_p\coloneqq \frac 1{pN}|V-\tilde V|_p^p.
    \end{align}
    Computing the time derivative yields
    \begin{align}
        \frac {\dd E_p} {\dd t}=& \frac 1N \sum_{i}\Bigg(|V_i-\tilde V_i|^{p-2}\\
        &\left.\times(V_i-\tilde V_i)\cdot \bra{\lambda_N(-V_i+\tilde V_i)-\frac 1N \sum_{j\neq i}(V_i-V_j)\cdot\nabla K(X_i-X_j)}\right)\\
        =&-p\lambda_N E_p \\
        &-\frac 1{N^2} \sum_i \left(|V_i-\tilde V_i|^{p-2}
        (V_i-\tilde V_i)\cdot \sum_{j\neq i}(V_i-V_j)\cdot \nabla K(X_i-X_j)\right).
    \end{align}
    For the second right-hand side term, we split $V_i - V_j = (V_i-\tilde V_i)+(\tilde V_i-\tilde V_j)+(\tilde V_j-V_j)$. Then, using the decay of $K$ from \eqref{eq:C_alpha}, firstly
    \begin{align}
        &\sum_i |V_i-\tilde V_i|^{p-2}(V_i-\tilde V_i)\cdot \sum_{j\neq i}(V_i-\tilde V_i)\cdot \nabla K(X_i-X_j)\\
        &\quad\ls S_{\alpha+1}\sum_i |V_i-\tilde V_i|^p.
    \end{align}
    Secondly, using that
    \begin{align}
        |\tilde V_i-\tilde V_j| &\ls \frac 1Nd_{ij}^{-\alpha}+\frac 1N\sum_{k\not\in \{i,j\}}|K(X_i-X_k)-K(X_j-X_k)| \\
        & \ls \frac 1N d_{ij}^{-\alpha}+\frac 1Nd_{ij}\sum_{k\not\in \{i,j\}}(d_{ik}^{-\alpha-1}+d_{j_k}^{-\alpha-1}) \\
        & \ls d_{ij} \frac 1 N S_{\alpha +1}
    \end{align}
    we find 
    \begin{align}
        &\sum_i |V_i-\tilde V_i|^{p-2}(V_i-\tilde V_i)\cdot \sum_{j\neq i}(\tilde V_i-\tilde V_j)\cdot \nabla K(X_i-X_j)\\
        &\quad\ls \sum_i |V_i-\tilde V_i|^{p-1} d_{ij} \frac 1 N S_{\alpha +1} \sum_{j\neq i} |\nabla K(X_i-X_j)| \\
        & \quad \ls  \frac 1 N S_{\alpha +1} S_\alpha  |V-\tilde V|_{p-1}^{p-1} \leq \frac 1 N S_{\alpha +1} S_\alpha  N^{1/p}|V-\tilde V|_{p}^{p-1}.
    \end{align}
        Finally, using Young's inequality
        \begin{align}
        &\sum_i |V_i-\tilde V_i|^{p-2}(V_i-\tilde V_i)\cdot \sum_{j\neq i}(\tilde V_j-V_j)\cdot \nabla K(X_i-X_j)\\
        & \quad \ls \sum_i \sum_{j\neq i} (|V_i - \tilde V_i|^p + |V_j - \tilde V_j|^p) |\nabla K(X_i-X_j)| \\ 
        &\quad\ls |V-\tilde V|_p^{p}S_{\alpha+1}
    \end{align}
    This yields
    \begin{align}
        \frac {\dd E_p} {\dd t}&\le -p\lambda_N E_p+C\frac 1N S_{\alpha+1}pE_p+C(1+\frac 1N S_{\alpha+1})\frac 1N S_\alpha (pE_p)^{(p-1)/p}\\
        &\le (-p\lambda_N+C\frac 1N S_{\alpha+1}) E_p+C(1+\frac 1N S_{\alpha+1})\frac 1N S_\alpha (pE_p)^{(p-1)/p}\label{eq:energy_bin}
    \end{align}
    \noindent \textbf{Step 2:} \emph{Setup of the buckling argument.}
    As before, we denote $\eta(t)=\W_p(\sigma_N(t),\sigma(t))$. We have $\|\sigma(t)\|_q\le\|\sigma^0\|_q\le C$ for all $t>0$. Lemma~\ref{lem:sums.Wasserstein} with Assumption \eqref{ass:bin1} yields (cf. Step 3 in the proof of Theorem \ref{th:Hauray})
    \begin{align}\label{eq:Salpha_bin}
        \frac 1N (S_\alpha+S_{\alpha+1})\ls 1+N^{-(\alpha+1)/d}\dmin^{-(\alpha+1)}\eta^{\frac{(d-\alpha-1)p}{d+pq'}}.
    \end{align}
    We consider $T_\ast=T_\ast(N)$ the largest time such that 
    \begin{align}
        \frac 1N (S_\alpha+S_{\alpha+1})\le C_1
    \end{align}
    for some constant $C_1$ that will be chosen later.\\
    
    \noindent \textbf{Step 3:} \emph{Estimate of $\eta$.} We write 
    \begin{align}\label{eq:etabin}
	\bar \eta(t)\coloneqq \sup_{0\le s\le t} \bra{\int_{\R^3} \abs{T_s(x)-x}^2 \sigma(s,x)\dd x}^{1/2},
\end{align}
where $T_s$ is the canonical transport map constructed via characteristics from an optimal map at $t=0$. (Cf. Step 1 of the proof of Theorem~\ref{th:Hauray}. The only difference here is that $u_N(X_i) = V_i$). As usual $\eta\le \bar \eta, \eta(0)=\bar\eta(0)$ and
\begin{align}
    \bar \eta(t) - \bar \eta(0) \leq& \int_0^t \left( \int_{\R^3} \left|V(s,T(x)) - (K\ast\sigma)(s,x)\right|^p \dd \sigma(s,x)  \right)^{\frac 1 p} \dd s\\
    \leq& \int_0^t \left( \int_{\R^3} \left|V(s,T(x)) - \tilde V(s,T(x))\right|^p\dd \sigma(s,x)  \right)^{\frac 1 p}\dd s\\
    &+\int_0^t\left( \int_{\R^3} \left|\tilde V(s,T(x))-(K\ast\sigma)(s,x)\right|^p \dd \sigma(s,x)  \right)^{\frac 1 p} \dd s.\label{eq:split.bin}
\end{align}
with the slightly abusive notation $V(s,x) = V_i$ and $\tilde V(s,x) = \tilde V_i$ for $x = X_i(s)$. Note that the second right-hand side term can be estimated exactly as in Step 1 of the proof of Theorem~\ref{th:Hauray} starting at \eqref{eq:f_est} and thus 
\begin{align}
    &\int_0^t\left( \int_{\R^3} \left|\tilde V(s,T(x))-(K\ast\sigma)(s,x)\right|^p \dd \sigma(s,x)  \right)^{\frac 1 p} \dd s\\
    &\ls \int_0^t (1+\frac 1N S_{\alpha+1}(s))\bar \eta(s)\dd s.\label{eq:split.bin1}
\end{align}
The first term satisfies
\begin{align}
    &\int_0^t \left( \int_{\R^3} \left|V(s,T(x)) - \tilde V(s,T(x))\right|^p\dd \sigma(s,x)  \right)^{\frac 1 p}\dd s\\
    &=\int_0^t N^{-1/p}|V-\tilde V|_p(s)\dd s.\label{eq:split.bin2}
\end{align}
Estimate \eqref{eq:energy_bin} yields for $N$ (and thus by \eqref{ass:bin1} $\lambda_N$) large enough
\begin{align}\label{eq:V_bin}
    N^{-1/p}|V-\tilde V|_p(s)\ls e^{-\frac{\lambda_N}{C}t}|V-\tilde V|_p(0)+C\min\{\lambda_N^{-1},s\}. 
\end{align}
Inserting \eqref{eq:V_bin} in \eqref{eq:split.bin2} and using this with \eqref{eq:split.bin1} in \eqref{eq:split.bin} yields, with computations analogous to \eqref{int.E} and following and with the help of \eqref{ass:bin3} to bound $|V-\tilde V|_p(0)$,
\begin{align}\label{eq:eta_bin}
    \eta(t)\le\bar\eta(t)\ls\bra{\eta(0)+\frac 1{\lambda_N}}e^{Ct}.
\end{align}
\noindent \textbf{Step 4:} \emph{Estimate of $\dmin$.} 
We use $\dot V_i=\lambda_N(-V_i+\tilde V_i)$ to write
\begin{align}
    V_i(s)=e^{-\lambda_Nt}V_i(0)+\int_0^s e^{\lambda_N(\tau-s)}\lambda_N\tilde V_i(\tau)\dd \tau.
\end{align}
Thus, using \eqref{ass:bin2}, we have 
\begin{align}
    d_{ij}(t)&\ge d_{ij}(0)-\left|\int_0^t V_i(s)-V_j(s)\dd s \right|\\
    &\ge d_{ij}(0)-\frac{1}{\lambda_N}|V_i-V_j|(0)-\lambda_N\int_0^t\int_0^s e^{\lambda_N(\tau-s)}|\tilde V_i-\tilde V_j|(\tau)\dd \tau\dd s\\
    &\ge  d_{ij}(0)-\frac{1}{2}d_{ij}(0)-\lambda_N\int_0^t |\tilde V_i-\tilde V_j|(\tau)\dd \tau
\end{align}
The same computation as in Step 2 of the proof of Theorem~\ref{th:Hauray} yields
\begin{align}
    d_{ij}(t)&\ge \frac 12 d_{ij}(0)-\lambda_N\int_0^t \frac 1N S_{\alpha+1}(\tau)\dmin(\tau)\dd \tau
\end{align}
and thus
\begin{align}\label{eq:dmin_bin}
    \dmin(t)\ge \frac 12\dmin(0)e^{-C(t+1)}.
\end{align}
\noindent \textbf{Step 5:} \emph{Conclusion.} Combining \eqref{eq:Salpha_bin}, \eqref{eq:eta_bin} and \eqref{eq:dmin_bin} with \eqref{ass:bin1} shows  that $T_\ast(N)\to \infty$ as $N\to \infty$ provided $C_1$ is chosen large enough. The desired estimate is then \eqref{eq:eta_bin}.
\end{proof}}

\rs{
\section{Adaptations for rotating particles}\label{sec:rot}

We show how to adapt the proof of the main result to include particle rotations. The main changes concern Propositions \ref{prop:energy}, \ref{pro:resistance.estimates} and \ref{pro:gradient.resistance}. We give modified versions of these Propositions and their proofs below. The rest of the proof of the main theorem generalizes straightforwardly to the case with rotations.

For a given configuration of particle positions, $Y\in (\R^3)^N$ with $\dmin>2R$ and given translational and angular velocities $(W,Z)\in (\R^3)^{N} \times (\R^3)^{N}$, let $w\in \dot H^1(\R^3)$ be the solution to 
\begin{align}
\left\{\begin{array}{rl}
        - \Delta w + \nabla p = 0, ~ \dv  w &=0 \quad \text{in} ~ \R^3 \setminus \bigcup_i B_i, \\
	    w(x) &= W_i + Z_i \times (x-Y_i)\quad \text{in} ~ B_i.
	\end{array}\right.\label{eq:fru.rot}
\end{align}
We denote the corresponding forces and torques by
\begin{align}
	G_i\coloneqq -\int_{\partial B_i} \sigma[w] n \dd  \mathcal{H}^2, \\
 H_i \coloneqq -\int_{\partial B_i} (x - Y_i) \times \sigma[w] n   \dd  \mathcal{H}^2.
\end{align}
 The resistance matrix $\mR(Y)$ is then the linear operator that maps the velocities $(W,Z)$ to the forces and torques $(G,H)$.
The operator $\mR(Y)$ is still symmetric and positive definite, which is a consequence of the fact that for solutions $w_1,w_2 \in \dot H^1(\R^3)$ of \eqref{eq:fru.rot} corresponding to $(W_1,Z_1),(W_2,Z_2)\in (\R^3)^{N} \times (\R^3)^N$, respectively, we have by an application of the divergence theorem, 
\begin{align}\label{eq:R_bilin.rot}
\begin{aligned}
	(W_2,Z_2) \cdot\mR(Y)(W_1,Z_1)&=-\sum_{i}\int_{\partial B_i}w_2\cdot \sigma[w_1]n\dd \cH^2 \\
 &=\int_{\R^3\setminus \cup_i B_i}\nabla w_2\cdot \sigma[w_1]\dd x \\
    &=2\int_{\R^3\setminus \cup_i B_i}\nabla w_2\cdot e w_1\dd x \\
    &=2\int_{\R^3}\nabla w_2\cdot e w_1\dd x=\int_{\R^3}\nabla w_2\cdot \nabla w_1 \dd x.
    \end{aligned}
\end{align}

Let 
\begin{align}
    L &:= \frac 2 5 R^2, \qquad
    \m M := \begin{pmatrix}
        \Id & 0 \\
        0 & L^{-1/2} \Id
    \end{pmatrix}
\end{align}
where $\m M \in \R^{6N \times 6N}$.\footnote{Note that $L$ is the ratio between the moment of inertia and the mass of a ball of radius $R$ with constant mass density.}
We then denote
\begin{align}
     \tilde \mR &:= \m M \mR \m M, \\
    C_{\tilde \mR(Y)}&:= \max_{|A|_{2}=1} A \cdot \tilde \mR^{-1}(Y) A, \\
    c_{\tilde \mR(Y)}&:=\min_{|A|_{2}=1} A \cdot \tilde \mR(Y) A. \label{tilde.c}
\end{align}
Accordingly, we denote
\begin{align}
    c_{\tilde \mR,T}\coloneqq \inf_{0\le t\le T}c_{\tilde \mR(X(t))}.   
\end{align}

We recall the definition \eqref{eq:V_tilde.rot} of the velocities
\begin{align}\label{eq:def_tildeV.rot}
	(\tilde V(t),\tilde \Omega(t)) \coloneqq \mR^{-1}(X(t)) \frac{(\bar g,0)}{N}.
\end{align}

We remark first that \eqref{tilde.c} and \eqref{eq:def_tildeV.rot} and $|g| = 1$ immediately imply 
    \begin{align}\label{eq:tildeV_l2.rot}
        |\tilde V|_2\le N^{-1/2}c_{\tilde \mR(X)}^{-1}.
     \end{align} 

\begin{prop}\label{prop:energy.rot}
    Let $T>0$ be given. Assume that 
    \begin{align}\label{eq:lambda_cond.rot}
        \lambda_N \ge \frac{2}{N^2 c_{\tilde \mR,T}}\sup_{0\le t\le T}\|\nabla \tilde \mR^{-1}(X(t))(\bar{g},0)\|.    
    \end{align}
    Then for all $t\in [0,T]$, the following estimate holds 
    \begin{align}\label{eq:E_control.rot}
	    |(V- \tilde V, L^{1/2} (\Omega- \tilde \Omega)|_2(t) &\leq  |(V- \tilde V, L^{1/2} (\Omega- \tilde \Omega))|_2(0)  e^{-\frac 12 \lambda_N N  c_{\mR,T} t} \\
     &\hspace{-1cm} +\frac{2 \sqrt 2 \bra{1 - e^{-\frac 12 \lambda_N N  c_{\tilde \mR,T} t}}}{\lambda_N N^{5/2}c_{\tilde \mR,T}^2}\sup_{0\le t\le T} \|\nabla  \tilde \mR^{-1}(X)(\bar g,0)\|. \qquad \,
    \end{align}
\end{prop}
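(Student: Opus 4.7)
The plan is to mirror exactly the argument of Proposition \ref{prop:energy}, but after an appropriate rescaling that brings the rotational equation into a form that pairs naturally with the translational one. The key observation is that with $L=\tfrac{2}{5}R^2$, the prefactor $\tfrac{5}{2R^3}$ in \eqref{eq:Oi} becomes $\tfrac{1}{LR}$, so using $\gamma_N/R=N$ (by \eqref{ass:gamma} and the definition of $\gamma_N$), the microscopic evolution \eqref{eq:Vi}--\eqref{eq:Oi} can be rewritten in terms of the rescaled velocity $U\coloneqq (V,L^{1/2}\Omega)\in(\R^3)^{2N}$ as
\begin{align}
\dot U = \lambda_N\bigl((\bar g,0) - N\,\tilde\mR(X)\,U\bigr),
\end{align}
where we have used that, by direct checking of the defining boundary values in \eqref{eq:fru.rot}, the operator $\m M\mR\m M=\tilde\mR$ maps $(V,L^{1/2}\Omega)$ to $(F,L^{-1/2}T)$, so multiplying the torque equation by $L^{1/2}$ produces the symmetric system above.

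Next, the definition of $(\tilde V,\tilde\Omega)$ in \eqref{eq:V_tilde.rot} says exactly that $\mR(\tilde V,\tilde\Omega)=(\bar g,0)/N$. Applying $\m M$ from the left and reading the identity through $\tilde\mR$, this is equivalent to
\begin{align}
\tilde U\coloneqq (\tilde V,L^{1/2}\tilde\Omega)=\tilde\mR^{-1}(X)(\bar g,0)/N,
\end{align}
and in particular $|\tilde U|_2\le N^{-1/2}c_{\tilde\mR(X)}^{-1}$ by \eqref{tilde.c} and $|(\bar g,0)|_2=\sqrt N$, which is the analogue of \eqref{eq:tildeV_l2}. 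Now define the modulated energy
\begin{align}
E\coloneqq \frac{1}{2N}|U-\tilde U|_2^{2} = \frac{1}{2N}\bigl|\bigl(V-\tilde V,\,L^{1/2}(\Omega-\tilde\Omega)\bigr)\bigr|_2^{2}.
\end{align}
Since positions evolve only via $\dot X_i=V_i$ (rotations do not enter the resistance matrix because the particles are spherical), $\tfrac{d}{dt}\tilde U=(V\cdot\nabla_X)\tilde\mR^{-1}(X)(\bar g,0)/N$, exactly as in the translational case.

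Differentiating $E$ and substituting both equations gives, in complete analogy to \eqref{eq:dtE},
\begin{align}
\frac{dE}{dt}=-\lambda_N(U-\tilde U)\cdot\tilde\mR(U-\tilde U) - \frac{1}{N^2}(U-\tilde U)\cdot\bigl((V\cdot\nabla)\tilde\mR^{-1}(X)(\bar g,0)\bigr).
\end{align}
Splitting $V = (V-\tilde V)+\tilde V$, bounding $|V-\tilde V|_2\le|U-\tilde U|_2$ and $|\tilde V|_2\le|\tilde U|_2\le N^{-1/2}c_{\tilde\mR}^{-1}$, and using the coercivity $(U-\tilde U)\cdot\tilde\mR(U-\tilde U)\ge c_{\tilde\mR}|U-\tilde U|_2^{2}$ yields
\begin{align}
\frac{dE}{dt}\le -2\lambda_N N c_{\tilde\mR}E + \frac{2}{N}\|\nabla\tilde\mR^{-1}(X)(\bar g,0)\|\bigl(E+N^{-1/2}|\tilde U|_2\,E^{1/2}\bigr).
\end{align}
Assumption \eqref{eq:lambda_cond.rot} absorbs the $E$ term on the right-hand side into half of the coercive term, leaving a differential inequality of the form $\tfrac{d}{dt}\sqrt E\le -\tfrac12\lambda_N Nc_{\tilde\mR,T}\sqrt E + C N^{-5/2}c_{\tilde\mR,T}^{-2}\|\nabla\tilde\mR^{-1}(\bar g,0)\|$, which upon integration produces \eqref{eq:E_control.rot}.

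No genuinely new obstacle arises: the only delicate point is verifying the algebraic identity $\tilde\mR U=(F,L^{-1/2}T)$ together with the scaling of the rotational equation so that the torque contribution carries exactly the factor $L^{1/2}$ required to produce the symmetric coupled ODE; once this is in place, the remainder of the proof is syntactically identical to Proposition \ref{prop:energy}, with $\mR$ replaced by $\tilde\mR$ and $V$ by $U$.
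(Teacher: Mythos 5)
Your proposal is correct and follows essentially the same route as the paper: the paper's proof also introduces the modulated energy $\frac1{2N}\bigl(|V-\tilde V|_2^2+L|\Omega-\tilde\Omega|_2^2\bigr)$, uses the identity $(W,L\Omega)\cdot\m M^2\mR\,(W,\Omega)=(W,L^{1/2}\Omega)\cdot\tilde\mR\,(W,L^{1/2}\Omega)$ (your $\tilde\mR U=(F,L^{-1/2}H)$ rescaling, written slightly differently), the bound $|\tilde V|_2\le N^{-1/2}c_{\tilde\mR}^{-1}$, and absorbs the quadratic error via \eqref{eq:lambda_cond.rot} exactly as you do. The only blemish is the constant in your displayed $\sqrt E$-inequality (the source term should be $N^{-2}c_{\tilde\mR,T}^{-1}\|\nabla\tilde\mR^{-1}(\bar g,0)\|$, not $N^{-5/2}c_{\tilde\mR,T}^{-2}\|\cdot\|$; the extra $\sqrt{2N}$ and $c^{-1}$ only appear after converting $\sqrt E$ back to $|U-\tilde U|_2$), which does not affect the argument since the final estimate \eqref{eq:E_control.rot} is stated correctly.
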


\begin{proof}
We start by introducing the modulated energy
\begin{align} \label{eq:mod.energy.rot}
	E \coloneqq  \frac 1 {2N} \left( |V - \tilde V|_2^2 + L |\Omega - \tilde \Omega|^2_2 \right).
\end{align}
By using \eqref{eq:acceleration} and the representation \eqref{eq:def_tildeV.rot} we compute the time derivative of $E$ and estimate
\begin{align}
\begin{aligned}
	\frac {\dd E} {\dd t} &= \frac 1N((V,L \Omega) - (\tilde  V,L \tilde \Omega)) \cdot \left(\lambda_N N \left( \frac {(\bar g,0)} N - \m M^2 \mR(X) (V,\Omega) \right) \right.\\
 &\hspace{6cm}\left.- (V \cdot \nabla) \mR^{-1}(X) \frac {(\bar g,0)} N\right) \\
	&= -\lambda_N ((V, L^{1/2} \Omega)  - (\tilde  V,L^{1/2} \tilde \Omega)) \cdot \tilde \mR(X) \left((V, L^{1/2} \Omega)  - (\tilde  V,L^{1/2}  \tilde \Omega)\right) \\
 &\hspace{1cm} -\frac{1}{N^2}((V,L \Omega) - (\tilde V, L \tilde \Omega)) 
\cdot  \bra{\bra{(V - \tilde  V) +  \tilde V} \cdot \nabla} \mR^{-1}(X) (\bar g,0) \\
	& \leq - 2  \lambda_N c_{\tilde \mR(X)} E + \frac{1}{N^2} \|\nabla \tilde \mR^{-1}(X)(\bar{g},0)\||(V,L^{1/2} \Omega) - (\tilde V,L^{1/2} \tilde \Omega)|^2_2 \\
 & \hspace{1cm}+\frac{1}{N^2}\|\nabla \tilde \mR^{-1}(X)(\bar g,0)\||\tilde V|_2|(V,L^{1/2}\Omega) - (\tilde V,L^{1/2} \tilde \Omega)|_2. \\
	& \leq - 2 \lambda_N N c_{\tilde \mR(X)} E + \frac 2N  \|\nabla \tilde\mR^{-1}(X)(\bar{g},0)\|\left(E + N^{-1/2}|\tilde V|_2 E^{1/2} \right).
 \end{aligned} \label{eq:dtE.rot} 
\end{align}
We use \eqref{eq:tildeV_l2} and \eqref{eq:lambda_cond.rot} to rewrite \eqref{eq:dtE.rot} as 
\begin{align}
	\frac {\dd E} {\dd t} & \leq -  \lambda_N N c_{\tilde \mR(X)} E + \frac{2}{N^2c_{\tilde \mR(X)}}  \|\nabla \tilde\mR^{-1}(X)(\bar{g},0)\|E^{1/2}.
\end{align}
This implies
\begin{align}
	\sqrt E(t) \leq  \sqrt E(0) e^{-\frac 1 2 \lambda_N N  c_{\tilde \mR,T} t} + 2 \frac{1 - e^{-\frac 12 \lambda_N N  c_{\tilde \mR,T} t}}{\lambda_NN^3 c_{\tilde \mR,T}^2}\sup_{0\le t\le T} \|\nabla \tilde \mR^{-1}(X)(\bar g,0)\|^2.
\end{align}
The statement follows directly.
\end{proof}

    \begin{prop}\label{pro:resistance.estimates.rot}
Let $Y\in (\R^3)^N$ satisfy $\dmin \ge 4 R$. Then, the following estimates hold.
	\begin{align}
		c_{\tilde \mR(Y)} &\gtrsim \frac{R}{1+R S_1}, \label{eq:c_R.rot} \\
		C_{\tilde \mR(Y)} &\lesssim R. \label{eq:C_R.rot}
	\end{align}
\end{prop}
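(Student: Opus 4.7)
The plan is to adapt the variational/comparison-function strategy that gave Proposition \ref{pro:resistance.estimates} to the augmented state space $(W,\Omega)$, using that $\tilde{\mR} = \m M\mR\m M$ with $\m M=\mathrm{diag}(\Id,L^{-1/2}\Id)$ simply rescales the angular block so that translational and rotational degrees of freedom appear on the same footing. Throughout, we use that $L\sim R^2$ and $\dmin \ge 4R$.

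\textbf{Upper bound \eqref{eq:C_R.rot}.} First, I rewrite the quadratic form. For $(W,\Omega)\in(\R^3)^N\times(\R^3)^N$ and $v\in\dot H^1(\R^3)$ the corresponding solution of \eqref{eq:fru.rot}, identity \eqref{eq:R_bilin.rot} gives
\begin{align}
(W,L^{1/2}\Omega)\cdot\tilde\mR(Y)(W,L^{1/2}\Omega) \;=\; (W,\Omega)\cdot\mR(Y)(W,\Omega) \;=\; \|\nabla v\|_{L^2}^2.
\end{align}
As in the non-rotating case, $v$ is the Dirichlet minimizer among solenoidal fields that coincide with the rigid motion $W_i+\Omega_i\times(x-Y_i)$ on each $B_i$. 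Hence it is enough to build a test field $w\in\dot H^1(\R^3)$, divergence-free, realizing these rigid motions on the $B_i$, with $\|\nabla w\|_{L^2}^2 \lesssim R|W|_2^2 + R^3|\Omega|_2^2 \sim R\,|(W,L^{1/2}\Omega)|_2^2$. Thanks to $\dmin\ge 4R$, this is done particle by particle with disjoint supports in $B_{2R}(Y_i)$, as in \cite[Lemma 4.2]{HoeferJansen20}. The only new ingredient is the rotation part, for which I would use either a cutoff of the vector potential $\tfrac12\Omega_i\times(x-Y_i)^2$ of the rigid rotation (keeping solenoidality automatic) or a Bogovskii correction. Scaling gives $\|\nabla w_i\|_{L^2}^2\lesssim R|W_i|^2+R^3|\Omega_i|^2$, which summed yields \eqref{eq:C_R.rot}.

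\textbf{Lower bound \eqref{eq:c_R.rot}.} By symmetry of $\tilde\mR$, $c_{\tilde\mR(Y)}^{-1}=\max_{|A|_2=1}A\cdot\tilde\mR^{-1}(Y)A$. Parametrize $A=(G,L^{-1/2}H)$ so that $|A|_2^2=|G|_2^2+L^{-1}|H|_2^2=1$, and let $(W,\Omega)=\mR^{-1}(G,H)$ and $v$ the solution of \eqref{eq:fru.rot} with this data; then $v$ realizes forces $G_i$ and torques $H_i$ and
\begin{align}
A\cdot\tilde\mR^{-1}(Y)A \;=\; G\cdot W + H\cdot\Omega \;=\; (W,\Omega)\cdot\mR(Y)(W,\Omega) \;=\; \|\nabla v\|_{L^2}^2.
\end{align}
Now I introduce the explicit comparison field $w=\sum_i(w_i^S+w_i^R)$ where $w_i^S$ is the Stokeslet on $\partial B_i$ carrying force $G_i$ (as in \eqref{eq:single_sol}) and $w_i^R$ is the rotlet on $\partial B_i$ carrying torque $H_i$,
\begin{align}
w_i^R(x)=\begin{cases} \dfrac{H_i}{8\pi R^3}\times(x-Y_i), & |x-Y_i|\le R,\\[4pt] \dfrac{H_i\times(x-Y_i)}{8\pi|x-Y_i|^3}, & |x-Y_i|>R. \end{cases}
\end{align}
Since $w$ and $v$ have matching forces and torques on each $B_i$, and $v|_{B_i}$ is a rigid motion, the projection computation
\begin{align}
\int_{\R^3}\nabla v\cdot\nabla(v-w)\,\dd x \;=\; -\sum_i\int_{\partial B_i}v\cdot\sigma[v-w]n\,\dd\mH^2 \;=\; 0
\end{align}
yields $\|\nabla v\|_{L^2}\le\|\nabla w\|_{L^2}$.

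\textbf{Energy estimate of $w$ and conclusion.} Computing $\|\nabla w\|_{L^2}^2=-\sum_i\int_{\partial B_i}w\cdot\sigma[w]n\,\dd\mH^2$ and splitting into self and cross contributions, the self-energies are $\|\nabla w_i^S\|^2\sim|G_i|^2/R$ and $\|\nabla w_i^R\|^2\sim|H_i|^2/R^3\sim L^{-1}|H_i|^2/R$, whose sum is $\lesssim R^{-1}(|G|_2^2+L^{-1}|H|_2^2)=R^{-1}$. For the cross terms, the decay $|\Phi|\lesssim|x|^{-1}$, $|\nabla\Phi|\lesssim|x|^{-2}$, and $|\text{rotlet}|\lesssim|x|^{-2}$ combined with Young's inequality (with weight $L^{1/2}$ in the Stokeslet$\times$rotlet term) and the trivial bounds $S_2\le S_1/\dmin$, $S_3\le S_1/\dmin^2$ with $\dmin\ge 4R$ give
\begin{align}
\sum_{i\neq j}\tfrac{|G_i||G_j|}{d_{ij}}+\sum_{i\neq j}\tfrac{|H_i||H_j|}{d_{ij}^3}+\sum_{i\neq j}\tfrac{|G_i||H_j|}{d_{ij}^2} \;\lesssim\; S_1.
\end{align}
Altogether $c_{\tilde\mR(Y)}^{-1}\lesssim R^{-1}(1+RS_1)$, which is \eqref{eq:c_R.rot}. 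The only subtle step is the last one, where one must carefully balance the rotlet self-energy $|H|_2^2/R^3$ and the rotlet cross terms against $|G|_2^2+L^{-1}|H|_2^2$; once the weight $L^{1/2}\sim R$ is chosen in Young's inequality, every cross term collapses to a multiple of $S_1$ and the non-rotating argument goes through verbatim.
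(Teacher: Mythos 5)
Your proof is correct and follows essentially the same strategy as the paper: the upper bound via the Dirichlet variational principle and an explicit divergence-free extension of the rigid motions with energy $R|W_i|^2+R^3|\Omega_i|^2$ per particle, and the lower bound via the dual characterization $c_{\tilde\mR}^{-1}=\sup\|\nabla v\|^2$, an orthogonality argument based on matching forces and torques, and a superposition of single-particle solutions whose self-energies give $R^{-1}$ and whose cross terms give $S_1$ after Young's inequality with the weight $L^{1/2}\sim R$. The only (cosmetic) difference is that you write the comparison field explicitly as Stokeslet plus rotlet, whereas the paper defines the single-particle fields abstractly and invokes their decay; these are the same functions.
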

\begin{proof}
     \textbf{Proof of \eqref{eq:C_R.rot}.} Let $(W,Z) \in (\R^3)^N \times (\R^3)^N$ and let $v$ be the solution to \eqref{eq:fru.rot} with $(W,L^{1/2} Z)$. Then, on the one hand, $v$ is the minimizer of the Dirichlet energy among all solenoidal fields   $w$ satisfying $w(x) = W_i + L^{-1/2} Z_i \times (x-Y_i)$ for all $x \in B_i$ for all $i=1,\dots,N$. On the other hand, the form $(W,Z)\cdot \tilde \mR(Y)(W,Z) = (W,L^{-1/2} Z)\cdot \mR(Y)(W, L^{-1/2} Z)$ is equal to the Dirichlet energy (equation \eqref{eq:R_bilin.rot}). Combining both, we have 
	\begin{align}
		(W,Z)\cdot \tilde \mR(Y)(W,Z) =  \| \nabla v \|^2_2 \leq \|w\|_{\dot H^1}^2, 
	\end{align}
	for all $w \in \dot H^1(\R^3)$ with $\dv w = 0$ and $w = W_i + Z_i \times (x- Y_i)$ in $B_i$. To prove \eqref{eq:C_R} it is thus enough to construct a field $w$ with $w(x) = W_i + L^{-1/2} Z_i \times (x-Y_i)$ for all $x \in B_i$ and such that
		\begin{align}
		\|w\|^2_{\dot H^1(\R^3)} \leq C R |(W,Z)|_{2}^2.
	\end{align}
	This is possible under the assumption $|Y_i - Y_j| \geq 4 R$. For the construction we refer to \cite[Lemma 4.2]{HoeferJansen20}.

   \noindent\textbf{Proof of \eqref{eq:c_R.rot}.} Let now $v$ be the solution to the homogeneous Stokes equation with given forces and torques $(G,L^{1/2} H) \in (\R^{3})^N \times (\R^{3})^N$, i.e.
	\begin{align}
    \left\{\begin{array}{rl}
	    - \Delta  v + \nabla p = 0, ~ \dv   v &=0 \quad \text{in} ~ \R^3 \setminus \bigcup_i  B_i, \\
		  e  v &= 0 \quad \text{in} ~  \cup_i B_i, \\
		 -\int_{\partial  B_i} \sigma[ v] n \dd  \mathcal{H}^2  &= G_i,  \quad \text{for all} ~ 1 \leq i \leq N, \\
   -\int_{\partial  B_i} (x-Y_i) \times \sigma[ v] n \dd  \mathcal{H}^2  &= L^{1/2} H_i ,  \quad \text{for all} ~ 1 \leq i \leq N
   \end{array}\right.
	\end{align}
	Then, denoting $W_i[G,L^{1/2} H] = v(Y_i)$, we have  $(G,L^{1/2} H) = \mR W[G,L^{1/2} H]$, and thus
	\begin{align} \label{char.c_R.rot}
	    c^{-1}_{\mR(Y)} &= \sup_{|A|_2 = 1}  A\cdot \tilde \mR^{-1} A \\
     &= \sup_{|(G,H)|_2 = 1} (G,L^{1/2} H) \cdot W[G,L^{1/2} H] = \sup_{|(G,H)| = 1} \| v\|^2_{\dot H^1}.
	\end{align}
	On the other hand, we have
	\begin{align} \label{projection.rot}
		\| v\|_{\dot H^1} \leq \|w\|_{\dot H^1},
	\end{align}
	where $w = \sum_i w_i \in \dot H^1$ is given  through the unique solutions to the Stokes problems
	\begin{align}
    \left\{\begin{array}{rl}
	    - \Delta  w_i + \nabla p_i = 0, ~ \dv   w_i &=0 \quad \text{in} ~ \R^3 \setminus   B_i, \\
		  e  w_i &= 0 \quad \text{in} ~ B_i, \\
		 -\int_{\partial  B_i} \sigma[ v] n \dd  \mathcal{H}^2  &= G_i,   \\
   -\int_{\partial  B_i} (x-Y_i) \times \sigma[ v] n \dd  \mathcal{H}^2  &= L^{1/2} H_i .
   \end{array}\right.
	\end{align}
  Indeed, \eqref{projection.rot} follows from the observation that
	\begin{align}
	    \int_{\R^3} \nabla v \cdot \nabla ( v -  w) \dd x= -\sum_i \int_{\partial  B_i}   v\cdot (\sigma[v -  w] n) \dd  \mathcal{H}^2 = 0 ,
	\end{align}
	because $ e v = 0$ in $B_i$ and $\int_{\partial  B_i}   \sigma[v -  w] n \dd \mathcal{H}^2= 0 = \int_{\partial  B_i}  (x- Y_i) \times\sigma[v -  w] n \dd \mathcal{H}^2$.	
By a standard decay estimate, for all $x \in \R^3 \setminus B_i$
\begin{align}
    |w_i(x)| + |x-Y_i|(|\nabla w_i(x)| + |p_i(x)|) \lesssim \frac{|G_i|}{|x-Y_i|} + \frac{R |H_i|}{|x-Y_i|^2} \lesssim \frac{|G_i| + |H_i|}{|x-Y_i|}.
\end{align}
 Thus, an explicit calculation reveals that, recalling the definition of $S_1$ from \eqref{def.S} and using Young's inequality,
	\begin{align*}
		\| v\|^2_{\dot H^1} &\leq \|w\|^2_{\dot H^1}= 2 \sum_i  \int_{\R^3 \setminus B_i} |e w_i|^2 \dd x  - \sum_i \sum_{j 
  \neq i} \int_{\partial B_i}w_j\cdot(\sigma[w_i]n) \dd \mathcal{H}^2 \\
 & \lesssim  \sum_i \frac{|G_i|^2 + |H_i|^2 }{R} + \sum_i \sum_{j \neq i} \frac{(|G_i| + |H_i|)(|G_j| + |H_j|)}{|Y_i - Y_j|} \\
 &\lesssim |(G,H)|^2 (R^{-1} + S_1).
	\end{align*}
	Combining this with \eqref{char.c_R.rot} concludes the proof.
\end{proof}

\begin{prop}\label{pro:gradient.resistance.rot}
 There exists $\delta > 0$ with the following property. Assume that $Y\in (\R^3)^N$ satisfies $\dmin \ge 4 R$ and
\begin{align}  \label{delta.rot}
    R^3 S_3 \leq \delta.
\end{align}
Then
\begin{align}\label{eq:blub}
    \|\nabla \tilde \mR^{-1}(Y) (\bar g,0) \| \lesssim S_2\bra{1+R^2S_2} .
\end{align}
\end{prop}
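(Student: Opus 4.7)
The plan is to follow the same strategy as in the proof of Proposition \ref{pro:gradient.resistance}, adapted to the rotating setting. First I will reintroduce an intermediate velocity field $w[Y] \in \dot H^1(\R^3)$ solving the analogue of \eqref{eq:inertialess.intermediate} with the rigid body condition $w(x) = \tilde V_i + \tilde \Omega_i \times (x-Y_i)$ on each $B_i$ (instead of $\nabla w = 0$), together with unit force $g$ and zero torque on each particle, so that $\tilde \mR^{-1}(Y)(\bar g,0) = (\tilde V, L^{1/2}\tilde \Omega)$ can be read off via $\tilde V_i = w[Y](Y_i)$ and $\tilde \Omega_i = \tfrac 12 (\nabla \times w[Y])(Y_i)$. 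Since the input has zero torque, the zeroth-order approximation $w_0[Y] = \sum_i w_0^{(i)}$ keeps exactly the form \eqref{eq:w_0.explicit} (no rotlet contribution).

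For the method of reflections, I will redefine $Q_i$ to solve the Stokes problem outside $B_i$ with $e\psi = e\varphi$ in $B_i$ and both force and torque vanishing on $\partial B_i$. Since force- and torque-free Stokes exterior solutions decay like the stresslet $|x|^{-2}$, the pointwise bounds of Lemmas \ref{lem:decay.Q} and \ref{lem:decay.nabla.Q} remain valid in the rotating case, and the series expansion \eqref{eq:MOR.expansion} together with the analogue of Lemma \ref{lem:est.nabla.psi} go through unchanged, giving the same bounds now for both $\nabla_{Y_j}w_n[Y](Y_i)$ (to handle $\tilde V$) and $\nabla_{Y_j}\nabla_x w_n[Y](Y_i)$ (to handle $\tilde \Omega$).

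The shape derivatives read $\nabla_{Y_j}\tilde V_i = \nabla_{Y_j}w[Y](Y_i) + \delta_{ij}[\tilde\Omega_i]_\times$, where the extra diagonal term arises because $\nabla_x w(Y_i)$ is the antisymmetric matrix associated to $\tilde\Omega_i$, and $\nabla_{Y_j}\tilde\Omega_i = \tfrac 12 \nabla_{Y_j}(\nabla\times w[Y])(Y_i)$ (the analogous diagonal contribution vanishes since $\Delta w = 0$ in $B_i$ by rigidity). The extra diagonal term $[\tilde\Omega_i]_\times$ contributes at most $\sup_i |\tilde\Omega_i| \lesssim S_2$ to the operator norm via the pointwise bound on $\nabla w_0$, which is absorbed into the final estimate.

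The main new quantitative point, and the step I expect to be the most delicate, is controlling the rotational block $L^{1/2}\nabla_{Y_j}\tilde\Omega_i$, which involves one extra spatial derivative of $w$ relative to the translational block and thus decays only as $L^{1/2}/d_{ij}^3 \lesssim R/d_{ij}^3$ at zeroth order. The sum $\sum_j R/d_{ij}^3 = R S_3$ is however controlled via the no-overlap condition $\dmin \geq 4R$ as $R S_3 \leq R S_2/\dmin \lesssim S_2$, which together with \eqref{eq:infty_1} absorbs this contribution into $S_2$. For the higher-order terms from the reflection expansion, the extra spatial derivative lands on an exterior $Q_i$-image, again producing an additional factor $d^{-1}$ compared to Lemma \ref{lem:est.nabla.psi}, which is absorbed by the factor $L^{1/2} \lesssim R$ together with the smallness provided by \eqref{delta.rot}. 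Summing the resulting geometric series then yields the bound \eqref{eq:blub}, exactly as in the proof of Proposition \ref{pro:gradient.resistance}.
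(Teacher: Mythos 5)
Your proposal is correct and follows essentially the same route as the paper: the same torque-free intermediate field $w[Y]$ with $e w=0$ on the particles, the same redefinition of $Q_i$ with vanishing force and torque, the same reflection expansion, the same extra diagonal term $\delta_{ij}\,\nabla_x w[Y](Y_i)=[\tilde\Omega_i]_{\times}$ in the translational block, and the same mechanism of absorbing the extra spatial derivative in the angular block via $L^{1/2}\lesssim R\leq \dmin/4$. The only small imprecisions are that the diagonal contribution to the angular block vanishes because the full Hessian $\nabla_x^2 w$ vanishes on $B_i$ (rigid motions are affine), not merely $\Delta w$, and that the bound $\sup_i|\tilde\Omega_i|\lesssim S_2$ requires controlling the reflection corrections to $\nabla_x w_0(Y_i)$ and not only $\nabla_x w_0$ itself — which is exactly what the expansion you set up provides.
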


We remark that the proof of Proposition~\ref{pro:gradient.resistance.rot} is largely similar to the proof of Proposition~\ref{pro:gradient.resistance}. In the interest of keeping the appendix short, we only comment on parts, where the proof differs.\\

For a particle configuration $Y \in (\R^3)^N$ as in the statement of Proposition \ref{pro:gradient.resistance.rot}, we introduce the function $ w[Y] \in \dot H^1(\R^3)$ as the solution to 
\begin{align}
    	\label{eq:inertialess.intermediate.rot}
\left\{\begin{array}{rl}
		- \Delta   w[Y] + \nabla   p[Y] = 0, ~ \dv    w[Y] &=0 \quad \text{in} ~ \R^3 \setminus \bigcup_i B_i, \\
		  e w[Y] &= 0 \quad \text{in} ~ \bigcup_i  B_i, \\
		- \int_{\partial B_i^l} \sigma[w[Y]] n \dd  \mathcal{H}^2  &= g \quad \text{for all} ~ 1 \leq i \leq N, \\
  		 -\int_{\partial B_i^l} (x - X_i) \times \sigma[w[Y]] n \dd  \mathcal{H}^2  &= 0 \quad \text{for all} ~ 1 \leq i \leq N.
\end{array}\right.
\end{align}
It suffices to show the estimate in Proposition \ref{pro:gradient.resistance.rot} when $\tilde \mR$ is replaced by $\bar \mR$ defined as
\begin{align}
    \bar \mR &: = \bar {\m M} \mR \bar {\m M}, \\
    \bar  {\m M} &:= \begin{pmatrix}
        \Id & 0 \\
        0 & \frac 1 {2 R} \Id
    \end{pmatrix}
\end{align}
since $L$ is equal $R^2$ up to a fixed multiplicative constant.
Then, by definition of $\bar \mR(Y)$, we have
\begin{align}
    \bar \mR(Y)^{-1} (\bar g ,0) = (w[Y](Y_1), \dots, w[Y](Y_N), R \curl w[Y](Y_1), \dots ,  R \curl w[Y](Y_N)).
\end{align}
Rearranging the entries, we conveniently write
\begin{align}
    (\bar \mR^{-1}(Y) (\bar g ,0))_i &=   (w[Y](Y_i),R \curl w[Y](Y_i))
\end{align}
such that
\begin{align}
    &\nabla_{Y_j}(\bar\mR^{-1}(Y) (\bar g,0))_i \\ &= \nabla_{Y_j}   (w[Y](Y_i), R \curl_x w[Y](Y_i)) + \delta_{ij} \nabla_{x}    (w[Y](Y_i), R \curl w[Y](Y_i)) \\
 &= \nabla_{Y_j}   (w[Y](Y_i), R\curl_x w[Y](Y_i))+  \delta_{ij}     (\nabla_{x} w[Y](Y_i),0) \label{eq:nabla_X.R.rot},
\end{align}
provided these gradients exist and where the second term after the first identity in the second line vanishes because the constraint  $e w[Y] = 0$ in $B_i$ from \eqref{eq:inertialess.intermediate.rot} implies that $\nabla^2 w[Y] = 0$ in $B_i$.

As in Section~\ref{sec:EstR}, we define $  w_0[Y] \in \dot H^1(\R^3)$ as the solution to 
\begin{align}
    -\Delta   w_0[Y] + \nabla   p_0 [Y]  = \sum_i \delta_{\partial B_i} g,
\end{align}
and note that we already have information on $w_0$ by Lemma~\ref{lem:nabla.w_0}. Again, the strategy is to estimate $w_0[Y]$ and $w[Y]-w_0[Y]$.

To estimate the difference $  w[Y] -   w_0[Y]$, we define for $k \in \N$ the functions $  w_k[Y]$ obtained through the method of reflections from $  w_0[Y]$.
More precisely, following the notation from \cite{Hofer18MeanField}, we introduce $Q_i$ this time as the solution operator that maps a  function $\varphi \in H^1_{\sigma}(B_i)$ to the solution $\psi \in \dot H^1_{\sigma}(\R^3)$ of 
\begin{equation} \label{eq:Q.rot}
\left\{
\begin{array}{rl}
	- \Delta \psi + \nabla p = 0, ~\dv \psi &=  0 \quad \text{in} ~ \R^3 \setminus B_i, \\
	e \psi &= e \varphi \quad \text{in} ~ B_i, \\
\int_{\partial B_i} \sigma[\psi] n\dd \mathcal{H}^2&=\int_{\partial B_i} (x- X_i) \times \sigma[\psi] n\dd \mathcal{H}^2 = 0.
	\end{array}\right.
\end{equation} 
Then, we know from \cite[Proposition 3.12]{Hofer18MeanField} that 
\begin{align} \label{eq:MOR.rot}
	  &w_k[Y] \coloneqq (1 - \sum_i Q_i)^k   w_0[Y] \to   w[Y] \quad \text{in} ~ \dot H^1_\sigma(\R^3) \cap L^\infty(\R^3), \\
   &\|e w_{k} \|_{L^\infty(\cup_i B_i)} \leq {S_2} (C  R^3 S_3)^k. \label{eq:MOR.rot.2}
\end{align}
Note that \cite[Proposition 3.12]{Hofer18MeanField} is stated only for the case without rotations but can be generalized to include this case (cf. \cite{NiethammerSchubert19, Hoefer19}). Also note that $\alpha_2$ in \cite[Proposition 3.12]{Hofer18MeanField} is defined as $\alpha_2 = \frac 1 N S_2$, but instead of forces $\bar g$, in \cite{Hofer18MeanField}, the forces are $\frac{\bar g}{N}$.

We recall the following decay estimates:
\begin{lem}[{\cite[Lemma 3.1, Lemma 4.4]{Hoefer19}}]\label{lem:decay.Q.rot}
Let $\varphi \in H^1_{\sigma}({B}_i) $ such that $ \nabla \varphi \in L^\infty({B}_i)$. Then,  for all $x \in \R^3 \setminus B(Y_i,2R)$, it holds that
\begin{align} \label{est:Q_i.pointwise.rot}
	|\nabla^l (Q_i \varphi)(x)| \lesssim \frac { R^3}{|x - Y_i|^{l+2}} \|e \varphi\|_{L^\infty(B_i)} .
\end{align}
Moreover, 
	\begin{align} \label{eq:Q_i.average.rot}
		Q_i \varphi = \varphi - \fint_{\partial B_i} \varphi \dd \mathcal{H}^2 - \frac 1 2 \fint_{B_i}  \curl \varphi \dd \mathcal{H}^2  \times  (x- X_i) \quad \text{in} ~ B_i.
	\end{align}
\end{lem}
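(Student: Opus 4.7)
The plan is first to derive the representation formula \eqref{eq:Q_i.average.rot} via a Stokes reciprocity argument, and then to use it together with a variational energy bound and rescaling to establish the pointwise decay \eqref{est:Q_i.pointwise.rot}.

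For \eqref{eq:Q_i.average.rot}, the constraint $e(Q_i\varphi) = e\varphi$ in $B_i$ forces $Q_i\varphi - \varphi$ to be a rigid motion on $B_i$, so $Q_i\varphi = \varphi + a + b\times(x-X_i)$ inside $B_i$ for some $a,b\in\R^3$. To pin down $a$ and $b$ I would apply the reciprocity identity pairing $Q_i\varphi$ against the classical translating and rotating sphere fields $\Psi^W, \Psi^\Omega \in \dot H^1_\sigma(\R^3)$, taking respectively the constant value $W$ and the rigid rotation $\Omega\times(x-X_i)$ on $B_i$ and solving the homogeneous Stokes equation outside. The vanishing of force and torque for $Q_i\varphi$, combined with the explicit (symmetry-enforced) boundary stresses $\sigma[\Psi^W] n = -\tfrac{3}{2R}W$ and $\sigma[\Psi^\Omega] n = -\tfrac{3}{R}\Omega\times(x-X_i)$ on $\partial B_i$, converts the reciprocity identities into the two moment conditions
\[
\fint_{\partial B_i} Q_i\varphi \dd \mathcal{H}^2 = 0, \qquad \fint_{\partial B_i} (x-X_i)\times Q_i\varphi \dd \mathcal{H}^2 = 0.
\]
Substituting the rigid-motion ansatz and using the elementary identities $\fint_{\partial B_i}(x-X_i)=0$ and $\fint_{\partial B_i}(x-X_i)_j(x-X_i)_k = \tfrac{R^2}{3}\delta_{jk}$ yields $a = -\fint_{\partial B_i}\varphi$ and $b = -\tfrac{3}{2R^2}\fint_{\partial B_i}(x-X_i)\times\varphi \dd\mathcal{H}^2$. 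The divergence-theorem identity $\int_{B_i}\curl\varphi = \int_{\partial B_i} n\times\varphi\dd\mathcal{H}^2$ with $n = (x-X_i)/R$ then rewrites the latter as $b = -\tfrac12\fint_{B_i}\curl\varphi$, matching the form in \eqref{eq:Q_i.average.rot}.

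For the decay estimate, the key observation is that the problem defining $Q_i$ depends on $\varphi$ only through $e\varphi$, so after subtracting the rigid-motion part of $\varphi$ (which does not alter $Q_i\varphi$) I may apply Korn's inequality to obtain $\|\nabla\varphi\|_{L^2(B_i)} \lesssim \|e\varphi\|_{L^2(B_i)} \lesssim R^{3/2}\|e\varphi\|_{L^\infty(B_i)}$. The variational characterization of $Q_i\varphi$ as the $\dot H^1_\sigma(\R^3)$-minimizer of Dirichlet energy subject to the strain constraint in $B_i$, together with the divergence-free extension of Lemma~\ref{lem:extension}, then yields the global bound $\|\nabla Q_i\varphi\|_{L^2(\R^3)} \lesssim R^{3/2}\|e\varphi\|_{L^\infty(B_i)}$. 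Rescaling $\tilde x = (x-X_i)/R$ reduces the problem to a fixed unit ball, where $Q_i\varphi$ solves the homogeneous exterior Stokes problem, vanishes at infinity, and carries zero force and zero torque on $\partial B_1$. Consequently the Stokeslet and Rotlet terms in its multipole expansion at infinity vanish identically, leaving the stresslet as the leading contribution with $|\tilde x|^{-2}$ decay; interior Stokes regularity promotes this to the derivative bounds $|\nabla^l Q_i\varphi(\tilde x)| \lesssim \|e\varphi\|_{L^\infty(B_i)}/|\tilde x|^{l+2}$, and unscaling restores the factor $R^3$.

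The main obstacle is quantifying the stresslet decay with a constant depending only on $\|e\varphi\|_{L^\infty(B_i)}$ and carrying the correct scaling in $R$. I anticipate handling this via the single- and double-layer potential representation for the exterior Stokes problem on the fixed unit ball: the stresslet coefficient is controlled by a weak norm of the boundary stress, itself controlled by the $\dot H^1$-energy bound above, and derivatives of arbitrary order are produced by differentiating under the integral in the potential representation. Tracking the $R$-dependence through the unscaling is then a matter of bookkeeping.
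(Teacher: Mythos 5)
The paper never proves this lemma internally --- it is imported verbatim from the cited reference \cite{Hoefer19} --- so there is no in-paper argument to compare against; judged on its own, your reconstruction is correct and is essentially the standard proof from that literature. The reciprocity computation pinning down the rigid-motion part is right: $e(Q_i\varphi)=e\varphi$ in $B_i$ forces $Q_i\varphi-\varphi$ to be rigid there, the tractions of the translating and rotating sphere are indeed the pointwise $-\tfrac{3}{2R}W$ and $-\tfrac{3}{R}\,\Omega\times(x-X_i)$, so the Lorentz reciprocal identity (valid for two finite-energy exterior Stokes solutions, since both sides equal $2\int e u_1\cdot e u_2$) turns the zero force/torque conditions into $\fint_{\partial B_i}Q_i\varphi\,\dd\mathcal H^2=0$ and $\fint_{\partial B_i}(x-X_i)\times Q_i\varphi\,\dd\mathcal H^2=0$, and the algebra with $\fint_{\partial B_i}(x-X_i)_j(x-X_i)_k\,\dd\mathcal H^2=\tfrac{R^2}{3}\delta_{jk}$ together with $\int_{B_i}\curl\varphi\,\dd x=\int_{\partial B_i}n\times\varphi\,\dd\mathcal H^2$ reproduces exactly the stated constants. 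For the decay, the chain Korn $\to$ extension competitor (Lemma~\ref{lem:extension}) $\to$ energy bound $\|\nabla Q_i\varphi\|_{L^2}\lesssim R^{3/2}\|e\varphi\|_{L^\infty(B_i)}$ $\to$ rescaling $\to$ force-free far-field expansion is also the intended route, and the $R$-bookkeeping you describe closes correctly. Two small remarks, neither a gap: the $|\tilde x|^{-2}$ decay already follows from the vanishing of the force alone (the rotlet, like the stresslet, decays as $|\tilde x|^{-2}$, so invoking the zero-torque condition buys nothing at this order); and the one step you leave schematic --- the quantitative far-field constant --- is obtained most cleanly not via boundary layer potentials but by writing $\tilde\psi=\Phi\ast f$ with $f$ a compactly supported $H^{-1}$ distribution of vanishing total mass satisfying $\|f\|_{H^{-1}}\lesssim\|\nabla\tilde\psi\|_{L^2}$, and expanding $\Phi(\tilde x-y)=\Phi(\tilde x)+O(|\tilde x|^{-2})$, which avoids the trace bookkeeping entirely.
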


With these estimates, the first right-hand side term in \eqref{eq:nabla_X.R.rot} is dealt with exactly as before in the proof of Proposition \ref{pro:gradient.resistance}.  For the additional term $R\curl w[Y](Y_i)$ in this term, one observes that the additional gradient is compensated by the factor $R$ since  $R \nabla Q_i \varphi$ has the same decay as $Q_i \varphi$.  The same reasoning applies to estimating $R \nabla_{Y_j}\curl_x\nabla w_{0}[Y](Y_i)$, which can be estimated as in Lemma~\ref{lem:nabla.w_0} and by trading the $R$ for the $\curl$.

The last right-hand side term in \eqref{eq:nabla_X.R.rot} is estimated through the following lemma.
\begin{lem}
    Under the assumptions of Proposition \ref{pro:gradient.resistance.rot} we have for all $1 \leq i \leq N$
    \begin{align}
        \left\|\left(\delta_{ij}     (\nabla_{x} w[Y](Y_i),0) \right)_{ij}\right\| \le S_2\label{nabla.w_0.2.rot} 
    \end{align}
\end{lem}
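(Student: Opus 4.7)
The key observation is that the matrix $\bigl(\delta_{ij}(\nabla_{x} w[Y](Y_i),0)\bigr)_{ij}$ is block diagonal, so its operator norm on $(\R^3)^N\times (\R^3)^N$ equals $\max_i|\nabla_{x} w[Y](Y_i)|$. Hence the task reduces to establishing the scalar bound $|\nabla_{x} w[Y](Y_i)|\lesssim S_2$ uniformly in $i$.

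To prove this, I would expand $w[Y]$ by the method of reflections exactly as in \eqref{eq:MOR.rot}--\eqref{eq:w_n.sum.psi}: $w[Y]=\lim_n w_n[Y]$ with $w_n[Y]=\sum_k\sum_{m=0}^n\psi_m^{(k)}[Y]$ and $\psi_m^{(k)}[Y]=\sum_{\underline l\in\Pi_m^{(k)}}(-1)^m Q_{l_1}\cdots Q_{l_m}w_0^{(k)}$. Convergence in $\dot H^1\cap L^\infty$ is provided by \eqref{eq:MOR.rot}, and the explicit form of $w_0^{(k)}$ is the obvious analogue of \eqref{eq:w_0.explicit} (with the single-particle Stokes solution replaced by its version for rotating spheres, which still has the same decay).

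For the leading order $m=0$: $w_0^{(i)}$ is constant on $B_i$, so $\nabla w_0^{(i)}(Y_i)=0$, while for $k\ne i$ the single particle solution obeys $|\nabla w_0^{(k)}(Y_i)|\lesssim d_{ik}^{-2}$. Summing yields $|\nabla w_0[Y](Y_i)|\lesssim S_2$, which already gives the claimed bound at leading order. For the higher order $m\ge 1$ contributions, I would bound $|\nabla (Q_{l_1}\cdots Q_{l_m}w_0^{(k)})(Y_i)|$ by iterating \eqref{est:Q_i.pointwise.rot}: each reflection $Q_{l_j}$ applied at a point outside $B_{l_j}$ contributes a factor $R^3 d_{\cdot,l_j}^{-3}$ times the sup of $e$ of the remaining inner field on $B_{l_j}$, and the innermost factor $w_0^{(k)}$ contributes the $d_{l_m,k}^{-1}$ (or $d_{l_m,k}^{-2}$ when differentiated) decay. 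Summing first over $k$ and then over the chains $\underline l$ using the contraction estimates \eqref{eq:sum.contraction.1}--\eqref{eq:sum.contraction.3} of Lemma~\ref{le:sums} produces a geometric series of the form $S_2\sum_{m\ge 1}(CR^3 S_3)^m$ which, thanks to the smallness assumption \eqref{delta.rot}, is bounded by $CS_2$. This is entirely parallel to the proof of Lemma~\ref{lem:est.nabla.psi}, but without the outer derivative $\nabla_{Y_j}$, so all exponents in the denominators are smaller by one, which is exactly what makes the leading order $m=0$ already scale like $S_2$ rather than $S_3$.

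The one point that requires care is the case $i=l_1$, where $Y_i$ lies inside $B_{l_1}$ and the exterior decay estimate \eqref{est:Q_i.pointwise.rot} does not apply. Here I would invoke the representation \eqref{eq:Q_i.average.rot}: on $B_i$, $Q_i\varphi$ equals $\varphi$ minus a rigid body motion, so $|\nabla Q_i\varphi(Y_i)|\lesssim \|\nabla\varphi\|_{L^\infty(B_i)}$, and then the remaining inner chain $\varphi=Q_{l_2}\cdots Q_{l_m}w_0^{(k)}$ is estimated as above. I expect this book-keeping case $i=l_1$ to be the only mildly delicate step, but since the resulting bound has the same scaling as in the generic case $i\ne l_1$, it is absorbed into the same geometric series and does not affect the final estimate.
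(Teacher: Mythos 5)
Your proposal is correct and follows essentially the same route as the paper: reduce to the scalar bound $\sup_i|\nabla_x w[Y](Y_i)|\lesssim S_2$ (the block-diagonal observation), get the leading term from the explicit form of $w_0$, control the higher reflections by the exterior decay estimate \eqref{est:Q_i.pointwise.rot} as a geometric series in $R^3S_3\le\delta$, and treat the self-interaction $j=i$ via the interior representation \eqref{eq:Q_i.average.rot}. The only difference is organizational: you expand into chains $Q_{l_1}\cdots Q_{l_m}w_0^{(k)}$ as in Lemma \ref{lem:est.nabla.psi}, whereas the paper telescopes $w_{k+1}-w_k=-\sum_jQ_jw_k$ and reuses the bound $\|ew_k\|_{L^\infty(\cup_iB_i)}\lesssim S_2(CR^3S_3)^k$ from \eqref{eq:MOR.rot.2}, which shortens the bookkeeping but yields the same estimate.
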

\begin{proof}
The explicit form \eqref{eq:w_0.explicit} yields
    \begin{align}
        \sup_i \sup_{x \in B_i} |\nabla_{x} w_0[Y](x) | \le S_2, \\
        \sup_i \sup_{x \in B_i} |\nabla^2_{x} w_0[Y](x) | \le S_3 
    \end{align}
    Moreover, by Lemma \ref{lem:decay.Q.rot} and \eqref{eq:MOR.rot.2}, for all $1 \leq i \leq N$
    we have
    \begin{align}
        \sup_{i} \|\nabla^2_x w_{k+1}[Y]\|_{L^\infty(B_i)} &\leq \sum_{j \neq i}\|\nabla^2_x Q_j w_{k}[Y]\|_{L^\infty(B_i)} \\
        &\leq  \sum_{j \neq i} \frac{R^3}{d_{ij}^4} \|e w_{k} \|_{L^\infty(\cup_i B_i)} \\
        &\lesssim  {S_2} R^3 S_4 (C  R^3 S_3)^k.
    \end{align}
    Using this and combining with Lemma \ref{lem:decay.Q.rot} and \eqref{eq:MOR.rot.2} yields
    \begin{align}
     & |\nabla_{x} (w_{k+1}[Y] - w_k[Y])(Y_i)| \\ &\leq  \sum_{j} |\nabla_{x}  Q_j w_k[Y])(Y_i)| \\
      &\lesssim |(e w_k[Y])(Y_i)| + \fint_{B_i} \left|\curl_x w_k[Y](Y_i) - \curl_x w_k[Y](x) \right| \dd  x \\
      &\quad+ \sum_{j \neq i} |\nabla_{x}  Q_j w_k[Y])(Y_i)| \\
      &\lesssim {S_2} (C  R^3 S_3)^k +  {S_2} R^4 S_4 (C  R^3 S_3)^{k-1} + \sum_{j \neq i} \frac{R^3}{d_{ij}^3} \|e w_{k} \|_{L^\infty(\cup_i B_i)} \\
      &\lesssim {S_2} (C  R^3 S_3)^k(1+R^3 S_3),
      \end{align}
      where we used $R S_4 \leq S_3$. 
      Hence, under the assumptions of  Proposition \ref{pro:gradient.resistance.rot}, $\nabla_{x} w_k[Y](Y_i)$ is a Cauchy sequence and
      \begin{align}
          |\nabla_{x} w[Y](Y_i)| &\leq \limsup_{k\to \infty} |\nabla_{x} w_{k+1}[Y](Y_i)| \\
          &\leq \limsup_{k\to \infty} |\nabla_{x} w_{0}[Y](Y_i)| +  \sum_{l=0}^k  |\nabla_{x} (w_{l+1}[Y] - w_l[Y])(Y_i)| \\
          & \lesssim S_2.
      \end{align}
      We conclude by the fact that the operator norm of a  diagonal matrix is estimated by the maximum of its entries.
\end{proof}
}

 \begin{refcontext}[sorting=nyt]
\printbibliography
 \end{refcontext}
\end{document}